\documentclass[reqno,11pt]{amsart}

\usepackage{style}

\begin{document}

\title{Schur-Positivity of Short Chords in Matchings}
\author{Avichai Marmor}
\address{Bar-Ilan University}
\email{avichai@elmar.co.il}
\date{}
\thanks{Partially supported by the Israel Science Foundation, Grant No.\ 1970/18, and by the European Research Council under the ERC starting grant agreement No.\ 757731 (LightCrypt).}

\begin{abstract}

We prove that the set of matchings with a fixed number of unmatched vertices is Schur-positive with respect to the set of short chords. 
Two proofs are presented. 
The first proof applies a new combinatorial criterion for Schur-positivity,  
while the second is bijective. 
The coefficients in the Schur expansion are derived, 
and interpreted in terms of Bessel polynomials. 
Then, we present a variant of Knuth equivalence for matchings, and show that every equivalence class corresponds to a Schur function. We proceed to find various refined Schur-positive sets, including the set of matchings with a prescribed crossing number and the set of matchings with a given number of pairs of intersecting chords. 
Finally, we
characterize all the matchings $m$ such that the set of matchings avoiding $m$ is Schur-positive.
\end{abstract}

\keywords{Quasi-symmetric function, Schur-positive set, symmetric function, pattern avoidance, matchings, Bessel polynomials}
	
\maketitle
	
\tableofcontents

\section{Introduction}
\label{sec:introduction}

Denote the set of nonnegative integers by $\NN$, and the set of positive integers by $\PP$. Given $N\in \NN$ and a set $\A$, together with a set-valued function $D:\A\to 2^{[N-1]}$ which is sometimes called a \emph{statistic}, we define its quasisymmetric generating function
\[
    \Q_{D} (\A) = \sum_{a \in \A} F_{N, D(a)},
\]
where $F_{N, S}$ is the \emph{fundamental quasisymmetric function} introduced by Gessel~\cite{knuth_classes_schur_positive}, indexed by a subset $S \subseteq [N-1]$ (see Section~\ref{subsec:definitions symmetric functions} for more background). We write $\Q(\A)$ instead of $\Q_{D}(\A)$ when $D$ is clear from the context.
The following long-standing problem was first addressed in~\cite{gessel1993counting}.
\begin{problem}[Gessel and Reutenauer]
    Find sets $\A$ and statistics $D$ for which $\Q_{D}(\A)$ is symmetric.
\end{problem}

When $\Q_{D}(\A)$ is symmetric, we say that $\A$ is symmetric with respect to the statistic $D$ (we omit $D$ when it is clear from the context). If $\A$ is symmetric, then there is a unique expansion
\[
    \Q_{D} (\A) = \sum_{\lambda \vdash N} c_\lambda s_\lambda,
\]
where $c_\lambda \in \QQ$ and $s_\lambda$ are Schur functions (discussed in Section~\ref{subsec:definitions symmetric functions}). The coefficients $c_\lambda$ are called the \emph{Schur coefficients} of $\A$.
If $c_\lambda \in \NN$ for all $\lambda \vdash N$, we say that $\A$ is Schur-positive with respect to $D$.

Gessel and Reutenauer also addressed the following problem:
\begin{problem}
    Find Schur-positive sets $\A$.
\end{problem}

The study of symmetric functions is partially motivated by the correspondence between symmetric functions of degree $N$ and class functions on the symmetric group $S_N$, which is established through the Frobenius characteristic map described in~\cite[Chapter 4.7]{sagan_book}. A symmetric function is Schur-positive if its corresponding class function is a proper character (see~\cite{adin_roichman_fine_set} for a detailed explanation). This perspective provides a rich algebraic framework for studying symmetric and Schur-positive sets.

During the last few decades, many Schur-positive subsets of the symmetric group $S_N$ were constructed with respect to the standard descent function of $S_N$, where a permutation $\pi \in S_N$ has $i \in \Des(\pi)$ if and only if $\pi(i) > \pi(i+1)$. A fundamental construction is due to Gessel~\cite{knuth_classes_schur_positive}, who proved that every set of permutations which is closed under the Knuth equivalence relations is Schur-positive. Furthermore, he showed that every Knuth class $\A \subset S_N$ (i.e., equivalence class of the Knuth relations) satisfies $\Q_{\Des}(\A) = s_\lambda$ for some $\lambda \vdash N$. This result has many direct consequences. For example, for every $J \subseteq [N-1]$, the set $D_{N,J}^{-1} = \{ \pi^{-1} \mid \Des(\pi)=J\} \subseteq S_N$ is Schur-positive because it is closed under the Knuth relations.

Other examples of Schur-positive sets of permutations include all conjugacy classes, as proven by Gessel and Reutenauer~\cite{gessel1993counting}, and the set of permutations $\pi \in S_N$ with a fixed number of inversions (i.e., $i<j$ such that $\pi(i) > \pi(j)$), a result demonstrated by Adin and Roichman \cite[Prop.\ 9.5]{adin_roichman_fine_set}.

Some Schur-positive sets that do not consist of permutations were found as well. For instance, Gessel~\cite{knuth_classes_schur_positive} proved that for every partition $\lambda \vdash N$, the set $\Syt(\lambda)$ of standard Young tableaux of shape $\lambda$ is Schur-positive (see Section~\ref{subsec:schur positive sets} for details).

In this work, we focus on the set of matchings of a given set of vertices.
\begin{definition}
\label{def:matching}
    A matching $m$ on a finite set of vertices $S \subseteq \PP$ is an unordered partition of $S$ into blocks, each block of size $1$ or $2$. If a block of a matching $m$ contains only the vertex $i$, we say that $i$ is an \emph{unmatched vertex} of $m$ and denote $(i) \in m$. If a block contains both $i$ and $j$ for $i<j$, we say that $(i,j)$ is a \emph{chord} of $m$ and denote $(i,j) \in m$. That is, the notation $(i,j)\in m$ implies that $i<j$. When $(i,j) \in m$, we also say that $i$ opens this chord and $j$ closes it. If a matching $m$ has no unmatched vertex, then we say that $m$ is a perfect matching.
\end{definition}

We will focus on matchings on the set $[N] := \{1,\dots, N\}$ for some $N \in \NN$. Denote by $\M_N$ the set of matchings on $[N]$, and by $\M_{N,f}$ the set of matchings on $[N]$ with $f$ unmatched vertices.

For example, 
\[
    \M_{4,0} = \big\{\{(1,2),\; (3,4)\},\ \{(1,3),\; (2,4)\},\ \{(1,4),\; (2,3)\}\big\}
\]
and
\[ 
    \M_{3,1} = \big\{\{(1,2),\; (3)\},\ \{(1,3),\; (2)\},\ \{(1),\; (2,3)\}\big\}.
\]

\begin{definition}
\label{def:intersect}
    Let $m$ be a matching on $S$. If $(i_1, i_3),\: (i_2,i_4) \in m$ for $i_1 < i_2 < i_3 < i_4$ then we say that the chords $(i_1, i_3)$ and $(i_2, i_4)$ \emph{intersect}.
\end{definition}

\begin{definition}
\label{def:short}
    Let $m \in \M_N$ be a matching on $[N]$. If $(i,i+1) \in m$, then it is called a \emph{short chord}. We denote $\Short(m) := \{i \in [N-1] \mid (i,i+1) \in m\}$.

\end{definition}

For example, consider the matching $m = \{(1,3),\; (2,6),\; (4,5)\}$ as in Figure~\ref{fig:matching example introduction}. In this matching the chords $(1,3)$ and $(2,6)$ intersect, and $\Short(m) = \{4\}$ (since $(4,5) \in m$).
\begin{figure}[htb]
    \begin{center}
        \begin{tikzpicture}[node distance={15mm}, thick, main/.style = {draw, circle, scale=1.2}]  
            \node[main] (1) {$1$};
            \node[main] (2) [right of=1] {$2$};
            \node[main] (3) [right of=2] {$3$};
            \node[main] (4) [right of=3] {$4$};
            \node[main] (5) [right of=4] {$5$};
            \node[main] (6) [right of=5] {$6$};

            \draw (1) to [out=67,in=113,looseness=0.8] (3);
            \draw (2) to [out=55,in=125,looseness=0.65] (6);
            \draw (4) to [out=67,in=113,looseness=1] (5);
        \end{tikzpicture}
    \end{center}
    \caption{The matching $m = \{(1,3),\; (2,6),\; (4,5)\}$.}
    \label{fig:matching example introduction}
\end{figure}

Note that $N \notin \Short(m)$ for every $m \in \M_N$. This holds even where $(1,N) \in m$.

Enumerative properties of short chords in perfect matchings were studied extensively. The distribution of the number of short chords in perfect matchings appears in entry A079267 of the OEIS~\cite{oeis}, and was analyzed for example by Cameron and Killpatrick~\cite{killpatrick2020statistics}. McSorley and Feinsilver~\cite{bessel_polynomials_mathcings} discovered that short chords of matchings are closely related to Bessel polynomials, as will be discussed in Section~\ref{subsec:schur coefficients bessel}.

The study of short chords of matchings is also motivated by the involutive length and its corresponding poset studied by Adin, Postnikov and Roichman~\cite{gelfand_involutions}. Related posets derived from the Bruhat order were studied by Richardson and Springer~\cite{richardson1990bruhat}, Hultman~\cite{hultman2008twisted}, Deodhar and Srinivasan~\cite{deodhar2001statistic} and others, motivated by the topology of linear algebraic groups (see~\cite{hultman2008twisted} for details). Further discussion on the algebraic motivations can be found in Section~\ref{subsec:schreier graph of perfect matchings}.




We prove the following:
\begin{theorem}
\label{thm:matchings schur positive}
    Let $n,f \in \NN$ be nonnegative integers, and denote $N=2n+f$. Then the set $\M_{N,f}$ is Schur-positive with respect to $\Short$. Furthermore, its Schur expansion is given by the following formula:
    \[
        \Q_{\Short} (\M_{N,f}) = \sum_{k=0}^n |\{ m\in \M_{N-2k,f} \mid \Short(m) = \emptyset\}|\ s_{N-k,k}.
    \]
\end{theorem}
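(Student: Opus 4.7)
The plan is to construct an explicit bijection
\[
    \Phi : \M_{N,f} \longrightarrow \bigsqcup_{k=0}^{n} \M_{N-2k,f}^{0} \times \Syt(N-k, k),
\]
where $\M_{j,f}^{0} := \{m' \in \M_{j,f} : \Short(m') = \emptyset\}$, arranged so that $\Phi(m) = (m', T)$ forces $\Short(m) = \Des(T)$. Combined with Gessel's identity $s_{N-k,k} = \sum_{T \in \Syt(N-k,k)} F_{N, \Des(T)}$, such a bijection will immediately yield the claimed Schur expansion, and Schur-positivity will follow because the resulting coefficients are cardinalities.

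I will describe the inverse $\Psi$ first, as it is more transparent. Given $(m', T)$ with $T \in \Syt(N-k, k)$, let $w = w_1 \cdots w_N$ record the row of each entry of $T$ and interpret $w$ as a parenthesization by treating $1$ and $2$ as open and close parentheses, respectively. Pairing each close parenthesis with the nearest unpaired open parenthesis to its left produces a non-crossing matching $\mu(T)$ on $[N]$ with $k$ arcs and $N - 2k$ unpaired positions; I set $\Psi(m', T) := \mu(T) \cup \iota(m')$, where $\iota$ embeds $m'$ order-preservingly into the unpaired positions of $\mu(T)$. The short chords of $\mu(T)$ are exactly the positions $i$ with $w_i = 1$ and $w_{i+1} = 2$, i.e., the descents of $T$; and since $m'$ has no short chord and two unpaired positions of $\mu(T)$ consecutive in $[N]$ correspond to consecutive indices in $m'$, the embedded matching $\iota(m')$ contributes no short chord. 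Hence $\Short(\Psi(m', T)) = \Des(T)$.

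For the forward direction, I would call an arc $(i, j) \in m$ \emph{nested} if $j = i + 1$, or, recursively, if every vertex in $\{i+1, \ldots, j-1\}$ is an endpoint of a nested arc lying entirely inside $\{i+1, \ldots, j-1\}$---equivalently, if $(i, j)$ becomes a short chord under iterated removal of short chords and relabeling. The nested arcs will form a laminar family $\nu(m)$, because the recursive definition forces every vertex in the interior of a nested arc to be matched, by a nested arc, inside the same interval, precluding crossings. Laminarity then encodes $\nu(m)$ via parenthesization as a ballot sequence and hence as an SYT $T$ of shape $(N-k, k)$ with $k = |\nu(m)|$, and I set $\Phi(m) := (m', T)$ with $m'$ the matching induced by $m$ on $[N] \setminus V(\nu(m))$, relabeled to $[N-2k]$, where $V(\nu(m))$ denotes the set of endpoints of nested arcs.

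The main obstacle is verifying that $m' \in \M_{N-2k, f}^{0}$, i.e., that $m'$ has no short chord. A putative short chord in $m'$ would come from an arc $(p, q) \in m$ whose endpoints are consecutive in $[N] \setminus V(\nu(m))$. Since $p, q \notin V(\nu(m))$, neither may lie strictly inside a nested arc (every interior vertex of a nested arc is in $V(\nu(m))$), so every nested arc is confined to one of the intervals $[1, p-1]$, $\{p+1, \ldots, q-1\}$, or $[q+1, N]$. Consequently, every vertex in $\{p+1, \ldots, q-1\}$ is paired by a nested arc internal to that interval, making the induced sub-matching perfect and laminar. By the recursive definition, $(p, q)$ is then itself nested, contradicting $(p, q) \notin \nu(m)$. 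It remains to check that $\Phi$ and $\Psi$ are mutually inverse and preserve $\Short = \Des$, which is routine given the constructions.
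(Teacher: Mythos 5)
Your proposal is correct and follows essentially the same route as the paper's second (bijective) proof: your ``nested'' arcs are the paper's unstable chords, your $m'$ is the paper's $\core(m)$, and your parenthesization/ballot-sequence encoding of the laminar family is precisely the paper's ballot-path construction of $T(m)$ and of the inverse map $\tilde{F}$, with your ``main obstacle'' argument playing the role of the paper's Lemma on stability of chords versus interior vertices. The only caveat is that the step you call routine --- verifying that the two maps are mutually inverse --- is where the paper spends most of its effort (two separate lemmas); the paper also gives an independent, shorter first proof via a general criterion for sparse statistics, which your approach does not touch.
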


It turns out  that the Schur coefficients of $\Q_{\Short} (\M_{N,f})$ 
may be explicitly interpreted in terms of Bessel polynomials, see 
Corollary~\ref{cor:schur expansion bessel} 
below.

We give two proofs of Theorem~\ref{thm:matchings schur positive}. The first proof relies on a new criterion for Schur-positivity of sparse statistics:

\begin{definition}
\label{def:sparse}
    We say that a set $J \subseteq [N-1]$ is \emph{sparse} if $\{j,j+1\} \nsubseteq J$ for every $1 \le j \le N-2$.

    For a set $\A$ and a function $D:\A \to 2^{[N-1]}$, we say that $D$ is \emph{sparse} if $D(a)$ is sparse for all $a \in \A$.
\end{definition}

\begin{theorem}
\label{thm:criterion schur positive two rows}
    Let $\A$ be a finite set with a statistic $D:\A \to 2^{[N-1]}$, and denote $n = \lfloor\frac{N}{2} \rfloor$. Then the following statements are equivalent:
    \begin{itemize}
        \item $D$ is sparse, and for every sparse $J \subseteq [N-1]$, the cardinality of the set $\{a \in \A \mid D(a) \supseteq J\}$ depends on the size of $J$ only.
        \item $\A$ is symmetric with respect to $D$, with a Schur expansion of the form $\Q_{D}(\A) = \sum_{k=0}^n c_k s_{N-k,k}$ for some $c_k \in \ZZ$.
    \end{itemize}

    Furthermore, if these statements hold then $\A$ is Schur-positive, and its Schur expansion is
    \[
        \Q_{D} (\A) = \sum_{k=0}^n \left|\left\{ a\in \A \mid D(a) = \{1,3,5,\dots, 2k-1\} \right\}\right|\: s_{N-k,k}.
    \]
    
\end{theorem}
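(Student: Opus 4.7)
The plan is to pass to the fundamental basis and then to a single enumerative lemma. Write $f(J):=|\{a\in\A : D(a)=J\}|$, so that $\Q_{N,D}(\A)=\sum_J f(J)F_{N,J}$, and expand each Schur function via $s_\lambda=\sum_{T\in\Syt(\lambda)}F_{N,\Des(T)}$; setting $d_{J,k}:=|\{T\in\Syt(N-k,k) : \Des(T)=J\}|$, the target equality $\Q_{N,D}(\A)=\sum_{k=0}^n c_k s_{N-k,k}$ becomes the coefficient equality $f(J)=\sum_k c_k d_{J,k}$ for every $J\subseteq[N-1]$. A key initial observation is that any two-row SYT has a sparse descent set (two consecutive descents would place one entry in both rows), so $d_{J,k}=0$ whenever $J$ is not sparse. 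In the ``$\Leftarrow$'' direction this forces $f$ itself to be supported on sparse sets, so $D$ is sparse; in the ``$\Rightarrow$'' direction sparsity of $D$ is part of the hypothesis.

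The heart of the argument is the lemma: for every sparse $J\subseteq[N-1]$ of size $l$,
\[
    e_{J,k}\;:=\;|\{T\in\Syt(N-k,k) : \Des(T)\supseteq J\}|\;=\;|\Syt(N-k-l,\,k-l)|.
\]
I would prove this by encoding $T\in\Syt(N-k,k)$ as a ballot word $s_1\cdots s_N\in\{U,D\}^N$ with $k$ letters $D$, where $s_i=U$ iff $i$ lies in row $1$; descents correspond to substrings $s_js_{j+1}=UD$. Requiring $\Des(T)\supseteq J$ forces a $UD$ at positions $(j,j+1)$ for each $j\in J$, and sparsity of $J$ makes these forced pairs pairwise disjoint. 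Deleting them yields a ballot word of length $N-2l$ with $k-l$ letters $D$, corresponding to an SYT of shape $(N-k-l,k-l)$; each deleted $UD$ rises and then falls back to the same level, so the ballot condition is preserved, and the deletion is reversible.

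Granted the lemma, the equivalence is Möbius inversion on the Boolean lattice. For ``$\Leftarrow$'', set $\beta_l:=g(J)$ for sparse $J$ of size $l$, where $g(J)=|\{a\in\A : D(a)\supseteq J\}|$, so that $f(J)=\sum_{K\supseteq J,\,K\text{ sparse}}(-1)^{|K|-|J|}\beta_{|K|}$. The lemma Möbius-inverts to $d_{J,k}=\sum_{K\supseteq J,\,K\text{ sparse}}(-1)^{|K|-|J|}|\Syt(N-k-|K|,k-|K|)|$, so it suffices to find integers $c_0,\dots,c_n$ with $\beta_l=\sum_{k\ge l}c_k|\Syt(N-k-l,k-l)|$ for all $l$; this is an upper-triangular system with $1$'s on the diagonal (as $|\Syt(N-2l,0)|=1$), hence uniquely solvable over $\ZZ$. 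Substituting yields $\Q_{N,D}(\A)=\sum_k c_k s_{N-k,k}$. The ``$\Rightarrow$'' direction runs the same computation backwards: $g(J)=\sum_{K\supseteq J}f(K)=\sum_k c_k e_{J,k}$, which by the lemma depends only on $|J|$.

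To finish I identify the coefficients as $c_k=f(J^*)$ with $J^*:=\{1,3,\dots,2k-1\}$, which forces every $c_k$ to be a cardinality and hence gives Schur-positivity. It suffices to verify $d_{J^*,k'}=\delta_{k,k'}$. For $k'<k$ the $k$ forced row-$2$ entries exceed the size of row $2$, impossible; for $k'=k$ the descent set pins row $2$ to be exactly $\{2,4,\dots,2k\}$, producing a unique valid tableau; for $k'>k$ every additional row-$2$ entry $x>2k$ must satisfy $x-1\in$ row $2$ to avoid creating a new descent, which iteratively forces row $2$ to be $\{2,4,\dots,2k\}\cup\{2k+1,\dots,k+k'\}$, but then at position $k+k'$ the ballot condition fails. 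I expect the combinatorial lemma of the second paragraph to be the main obstacle, as it carries all the combinatorial content of the theorem; the remainder is bookkeeping with Möbius inversion.
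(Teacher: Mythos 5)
Your proof is correct, and it takes a genuinely different route from the paper's. The paper splits the equivalence into two separately-proved lemmas: the direction from the two-row Schur expansion to the counting condition goes through the complementary statistic and a composition-based symmetry criterion (Lemma~\ref{lem:my symmetry criterion}), while the converse is proved either by induction on $N$ (via a lemma on prepending a column to a two-row tableau, Lemma~\ref{lem:remove first descent from syt}) or via column superstandard tableaux and the conjugate order. You instead work entirely in the fundamental basis and reduce both implications to the single closed-form count $|\Syt(N-k,k)(\Des\supseteq J)|=|\Syt(N-k-l,k-l)|$ for sparse $J$ with $|J|=l$, proved by deleting the $l$ pairwise disjoint forced $UD$-factors from the associated ballot word; Möbius inversion plus a unitriangular integer system (diagonal entries $|\Syt(N-2l,0)|=1$) then does the rest. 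This buys a more self-contained and symmetric treatment --- the same lemma drives both directions --- at the cost of an explicit enumeration that the paper's inductive argument avoids; your final step, that $\odds(k)$ is the descent set of exactly one two-row tableau, namely one of shape $(N-k,k)$, coincides with the paper's Lemma~\ref{lem:superstandard two rows characterize coefficients} (and your justification there is sound: the odd entries below $2k$ are descents and hence lie in row~1, so any extra row-2 entry exceeds $2k$ and propagates until the ballot condition fails). It is worth noting that ballot paths do appear in the paper, but only later, in the bijective proof of Theorem~\ref{thm:matchings schur positive}; your argument imports that tool into the proof of the criterion itself.
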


This new criterion implies Theorem~\ref{thm:matchings schur positive},
as shown at the beginning of Section~\ref{sec:short chords in matchings}, and can be applied to many other sets as well.
It is also applied in a subsequent paper \cite{gallai} to prove Schur-positivity of Gallai colorings and transitive colorings of complete graphs.

The second proof of Theorem~\ref{thm:matchings schur positive} provides a bijection from the set of matchings to a multiset of SYTs,
and applies the bijective criterion of Adin and Roichman for Schur-positivity \cite[Prop.\ 9.1]{adin_roichman_fine_set}, presented in Theorem~\ref{thm:criterion schur positive young}.

Following the bijective proof, we present an equivalence relation on matchings, motivated by Knuth equivalence of permutations~\cite{knuth1970permutations}:
\begin{definition}\label{def:knuth like equivalence}
    We say that two matchings $m_1,m_2 \in \M_N$ are \emph{Knuth equivalent} if one can be obtained from the other by a sequence of \emph{elementary Knuth transformations}:
    \begin{itemize}
        \item Replace the chords $(i,i+1),\; (i+2)$ with the chords $(i),\; (i+1,i+2)$ or vice versa (i.e.,\ interchange a short chord with an adjacent unmatched vertex).
        \item Replace the chords $(i,i+1),\; (i+2,j)$ with the chords $(i,j),\; (i+1,i+2)$, or $(i,i+1),\; (j,i+2)$ with $(j,i),\; (i+1,i+2)$, or vice versa (i.e.,\ interchange a short chord with an adjacent endpoint of another chord).
    \end{itemize}
\end{definition}

We also define the core of a given matching:

\begin{definition}[Definition~\ref{def:core of matching} below]
    The \emph{core} of a given matching $m \in \M_N$, denoted $\core(m)$, is obtained by repeatedly removing short chords from the matching until no short chords remain. The remaining vertices are then re-indexed with natural numbers starting from $1$ while preserving their relative order.

\end{definition}

A classical result due to Knuth~\cite{knuth1970permutations} states that two permutations $\pi_1,\pi_2 \in S_N$ are Knuth equivalent if and only if $P(\pi_1) = P(\pi_2)$, where $P(\pi)$ denotes the insertion tableau of a permutation $\pi$ defined by the Robinson-Schensted correspondence (see \cite[Section 3]{sagan_book} for details). We prove the following analogous result for matchings:
\begin{theorem}[Theorem~\ref{thm:knuth like equivalent iff same core} below]
    Two matchings $m_1, m_2 \in \M_N$ are Knuth equivalent if and only if $\core(m_1) = \core(m_2)$.
\end{theorem}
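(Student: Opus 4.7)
The plan is to prove the two implications separately. The forward direction, that elementary Knuth-like transformations preserve the core, is a local verification; the backward direction, that matchings with the same core are equivalent, will follow by induction on $N$ from a sliding lemma that transports a short chord to position $(1,2)$.

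For the forward direction, I would verify each of the two elementary transformations (call them T1 and T2). In T1, removing the short chord from either side of $(i,i+1),(i+2)\leftrightarrow (i),(i+1,i+2)$ and reindexing on $[N-2]$ leaves the same matching with an unmatched vertex at position $i$, the rest being untouched. In T2, an analogous check shows that both sides reduce to the same matching in which the chord $(i+2,j)$ on the left and the chord $(i,j)$ on the right both become $(i,j-2)$ after reindexing (when $j>i+2$; the case $j<i$ is symmetric). Since the single-step reductions coincide and the core is independent of the order of removals, the full cores agree.

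The heart of the backward direction is the following sliding lemma: if $(i,i+1)$ is the leftmost short chord of $m\in\M_N$ with $i\geq 2$, then $m$ is Knuth-like equivalent to a matching whose leftmost short chord is $(i-1,i)$. I would prove it by cases on the vertex at position $i-1$: if it is unmatched, apply T1; otherwise it is an endpoint of a chord $(i-1,j)$, and apply T2. Leftmost-ness forbids $j=i-2$ (else $(i-2,i-1)$ would be a short chord to the left of $(i,i+1)$); in each remaining sub-case the transformation affects only positions $\{i-1,i,i+1,j\}$, the positions $1,\dots,i-2$ are untouched (and previously free of short chords), and the transported chord lands at $(i-1,i)$, so leftmost-ness is preserved. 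Iterating the lemma $i-1$ times transports the leftmost short chord all the way to $(1,2)$.

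The backward direction then follows by induction on $N$. For $N\leq 1$ the statement is vacuous. For $N\geq 2$, let $m_1,m_2\in\M_N$ satisfy $\core(m_1)=\core(m_2)=c$. If neither has a short chord then $m_1=c=m_2$; otherwise $|c|<N$ forces both matchings to have short chords, and the sliding lemma lets us replace each $m_i$ by a Knuth-like equivalent matching with $(1,2)$ as a short chord. Writing $\tilde m_i$ for this matching with $(1,2)$ removed and reindexed on $[N-2]$, we have $\core(\tilde m_i)=c$, so the inductive hypothesis gives $\tilde m_1\sim\tilde m_2$; shifting these transformations by $+2$ (which leaves the chord $(1,2)$ fixed) witnesses $m_1\sim m_2$. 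The main obstacle is the sliding lemma, and specifically the verification that its case analysis, combined with the leftmost-ness hypothesis, never produces a short chord in $[1,i-2]$ that would disrupt the iteration down to position $(1,2)$.
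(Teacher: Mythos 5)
Your proof is correct. The forward direction is the same local verification as in the paper. For the backward direction you take a genuinely different route: the paper introduces an insertion operator $\insrt_i$ and proves three structural lemmas --- that the insertion position does not matter up to equivalence, that insertion is compatible with equivalence, and that every matching factors as iterated insertions applied to its core --- and then concludes by comparing two such factorizations. You instead normalize directly: transport the leftmost short chord to position $(1,2)$ via your sliding lemma, strip it, and induct on $N$. The underlying mechanism is shared (both arguments ultimately rest on the fact that a single elementary transformation slides a short chord by one position, which is exactly how the paper proves $\insrt_i(m)\sim\insrt_{i+1}(m)$), but your organization is more elementary, needing only one induction on $N$ rather than the commutation and factorization machinery; note that your induction step does quietly invoke the already-proved forward direction to conclude $\core(\tilde m_i)=c$ after sliding, which is legitimate given the order of the two steps. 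What the paper's heavier setup buys is a reusable factorization statement (every matching is an iterated insertion on its core) and a cleaner conceptual picture of the equivalence classes; what yours buys is brevity and a self-contained induction. Your careful check that the slid chord never creates a short chord in positions $1,\dots,i-2$ (the only new candidate being $(i+1,i+2)$ when $j=i+2$, which lies to the right) is exactly the point that needed verification, and it holds.
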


We show that Knuth classes of matchings, similarly to Knuth classes of permutations, have Schur functions as their generating functions:

\begin{theorem}[Corollary~\ref{cor:closed under knuth like is schur positive} below]
    Every set $\M \subseteq \M_N$ of matchings that is closed under Knuth equivalence is Schur-positive with respect to $\Short$. Moreover, if $\M$ is a Knuth equivalence class, then its generating function is $\Q_{\Short}(\M) = s_{N-k,k}$, where $N-2k$ is the number of vertices of $\core(m)$ for some arbitrary matching $m \in \M$.
\end{theorem}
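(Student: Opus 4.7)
The plan is to reduce the first claim to the second, then apply Theorem~\ref{thm:criterion schur positive two rows} to a single equivalence class. A subset of $\M_N$ closed under Knuth-like equivalence is a disjoint union of equivalence classes, and by Theorem~\ref{thm:knuth like equivalent iff same core} each such class is parametrized by a core $c$; so it suffices to show that every class whose core has $N-2k$ vertices has generating function $s_{N-k,k}$.

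Fix a class $\M$ with core $c$ on $[N-2k]$. First, $\Short$ is sparse: if both $i$ and $i+1$ lay in $\Short(m)$, then $(i,i+1)$ and $(i+1,i+2)$ would be chords sharing the vertex $i+1$. Next, for any sparse $J = \{i_1 < \dots < i_j\} \subseteq [N-1]$, I build a bijection
\[
    \{m \in \M \mid \Short(m) \supseteq J\} \ \longleftrightarrow \ \{m' \in \M_{N-2j} \mid \core(m') = c\}
\]
by deleting the short chords $(i_l, i_l+1)$ from $m$ and re-indexing the remaining $N-2j$ vertices in order; the inverse inserts these short chords back at the positions prescribed by $J$. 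Since deleting any single short chord leaves all other existing short chords intact, short-chord removal is confluent, so this bijection preserves the core. Because the right-hand cardinality depends only on $j = |J|$, Theorem~\ref{thm:criterion schur positive two rows} applies and yields
\[
    \Q(\M) = \sum_{j} c_j\, s_{N-j,j}, \qquad c_j = |\{m \in \M \mid \Short(m) = \{1,3,\dots,2j-1\}\}|.
\]

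I would finish by computing $c_j$ directly. If $\Short(m) = \{1, 3, \dots, 2j-1\}$, then the short chords of $m$ are exactly $(1,2), (3,4), \dots, (2j-1, 2j)$, and the induced matching on $\{2j+1, \dots, N\}$ has no short chords. Hence $\core(m)$ equals this induced matching (after re-indexing) and has $N-2j$ vertices. Membership in $\M$ forces $j = k$, and in that case $m$ is uniquely determined as the concatenation of $(1,2), \dots, (2k-1, 2k)$ with $c$ embedded on $\{2k+1, \dots, N\}$. Therefore $c_j = \delta_{j,k}$ and $\Q(\M) = s_{N-k,k}$, as required.

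The main technical point is the confluence of short-chord removal used in the bijection, which should follow from a straightforward diamond argument since removing one short chord never eliminates another that was previously present; this is implicit in the well-definedness of $\core(m)$.
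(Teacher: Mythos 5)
Your proof is correct, but it takes a genuinely different route from the paper. The paper proves Corollary~\ref{cor:closed under knuth like is schur positive} by restricting the explicit bijection $F:m\mapsto(\core(m),T(m))$ of Section~\ref{subsec:bijective proof matchings schur positive} to a single class, landing on $\{m_0\}\times\Syt(N-k,k)$, and then invoking the Adin--Roichman criterion (Theorem~\ref{thm:criterion schur positive young}); this is a one-line consequence of the machinery already built for the bijective proof of Theorem~\ref{thm:matchings schur positive}, including the ballot-path inverse construction. You instead apply the sparse-statistic criterion (Theorem~\ref{thm:criterion schur positive two rows}, really Lemma~\ref{lem:if sparse and good sizes then schur positive two rows}) directly to the class: your deletion/insertion bijection onto $\{m'\in\M_{N-2j}\mid \core(m')=c\}$ correctly verifies the hypothesis (it is a valid partial reduction, so it preserves the core by Proposition~\ref{prop:core well defined}, and the target depends only on $|J|$, being empty when $j>k$), and your computation that $\Short(m)=\{1,3,\dots,2j-1\}$ forces $\core(m)$ to be the re-indexed restriction to $\{2j+1,\dots,N\}$ --- hence $c_j=\delta_{j,k}$ --- is sound, since no chord $(a,a+1)$ with $a\ge 2j+1$ can exist. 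In effect you have mirrored the paper's \emph{first} proof of Theorem~\ref{thm:matchings schur positive} at the level of a single Knuth-like class rather than reusing its second, bijective proof; your route is self-contained modulo Proposition~\ref{prop:core well defined} and Theorem~\ref{thm:knuth like equivalent iff same core} and avoids Section~\ref{subsec:proof of bijection} entirely, at the cost of the extra step identifying the unique class member with descent set $\odds(k)$, which the paper gets for free from the second factor $\Syt(N-k,k)$.
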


Furthermore, utilizing Knuth classes, we explore in Section~\ref{sec:refinements} various refined Schur-positive sets, including:
\begin{itemize}
    \item The set of $k$-crossing matchings (i.e.,\ where $k$ is the maximal cardinality of a set of pairwise intersecting chords).
    \item The set of matchings with exactly $k$ pairs of intersecting chords.
\end{itemize}

Lastly, we define $\M_{N,f}(m)$ as the set of matchings in $\M_{N,f}$ that avoid the pattern $m$. In Proposition~\ref{prop:characterize pattern avoiding schur positive singletons}, we provide a characterization of the matchings $m$ for which $\M_{N,f}(m)$ is Schur-positive for all $N$ and $f$.

The remainder of this work is organized as follows: Section~\ref{sec:preliminaries} provides necessary background. In Section~\ref{sec:criterion two rows}, we prove a necessary and sufficient criterion for sparse Schur-positivity (Theorem~\ref{thm:criterion schur positive two rows}). In Section~\ref{sec:short chords in matchings}, we focus on matchings. We first derive the Schur-positivity of short chords in matchings (Theorem~\ref{thm:matchings schur positive}) from the above criterion. Then, we present an alternative bijective proof, followed by a description of the Schur coefficients of matchings in terms of Bessel polynomials. In Section~\ref{sec:refinements}, we utilize the bijection to refine the Schur-positivity property of Theorem~\ref{thm:matchings schur positive}. Finally, Section~\ref{sec:further research} concludes with further remarks and open problems.

\section{Preliminaries and notation}
\label{sec:preliminaries}

\begin{definition}
    For nonnegative integers $i,j$, we define the \emph{interval}
    \[
        [i,j] :=
        \begin{cases}
            \{i,i+1,\dots,j\} & \text{if $i \le j$,}\\
            \emptyset & \text{otherwise.}
        \end{cases}
    \]
    We also define $[i] = [1,i]$.
\end{definition}

\begin{definition}
\label{def:elements with given stat}
    Given a set $\A$, a function $D:\A \to 2^{[N-1]}$ and a set $J \subseteq [N-1]$, we denote $\A(D=J) := \{a \in \A \mid D(a) = J\}$ and $\A(D \supseteq J) := \{a \in \A \mid D(a) \supseteq J\}$.
\end{definition}

\subsection{Symmetric and quasisymmetric functions}
\label{subsec:definitions symmetric functions}

\begin{definition}
    Given $N \in \NN$, a \emph{partition} $\lambda$ of $N$ (denoted $\lambda \vdash N$), is a weakly-decreasing sequence $\lambda = (\lambda_1 \ge \lambda_2 \ge \dots \ge \lambda_\ell)$ of positive integers, such that $\lambda_1 + \dots + \lambda_\ell = N$. The values $(\lambda_1,\dots,\lambda_\ell)$ are called the \emph{parts} of $\lambda$.
    
    A \emph{composition} $\alpha$ of $N$ (denoted $\alpha \vDash N$) is a sequence $\alpha = (\alpha_1, \alpha_2, \dots, \alpha_\ell )$ of positive integers, such that $\alpha_1 + \dots + \alpha_\ell = N$. Every partition is a composition.
\end{definition}

Compositions of $N$ are in bijection with subsets of $[N-1] := \{1, \dots, N-1\}$ by 
\[
    [N-1] \supseteq \{i_1, i_2, \dots, i_\ell \} \mapsto (i_1, i_2 - i_1, \dots, i_\ell - i_{\ell-1}, N - i_\ell) \vDash N.
\]
For example, for $N = 10$, $\{2, 4, 7, 8\} \mapsto (2, 2, 3, 1, 2)$. The composition associated to a set $S$ is denoted $\alpha_S$, and the set associated to a composition $\alpha$ is denoted $S_\alpha$.

We say that two compositions $\alpha, \beta \vDash N$ are \emph{equivalent} (denoted $\alpha \sim \beta$), if $\beta$ is a rearrangement of the entries of $\alpha$.

Denote the ring of symmetric functions by $\sym$ and the ring of quasisymmetric functions by $\qsym$ (see, for example, \cite[Section 1]{S3_patterns_list} for details). The space of homogeneous symmetric functions of degree $N$ is denoted by $\sym_N$, and the space of homogeneous quasisymmetric functions of degree $N$ is denoted by $\qsym_N$.

The space $\qsym_N$ has several important bases. In this work, we focus on the \emph{fundamental basis}, consisting of the functions
\[
    F_{N,S} := \sum_{\substack{i_1 \le i_2 \le \dots \le i_N \\ \forall j \in S:i_j < i_{j+1}}} x_{i_1} x_{i_2} \dots x_{i_N},\quad S \subseteq [N - 1].
\]
We write $F_S$ instead of $F_{N,S}$ when $N$ is clear from the context.

The space $\sym_N$ has several standard bases as well. In this work, we focus on the Schur basis, which consists of the Schur functions $s_{\lambda}$, where $\lambda$ is a partition of $N$. 
The definition and properties of Schur functions can be found in \cite[Section 4.4]{sagan_book}. Here, we adopt the combinatorial approach to Schur functions, as described in Theorem~\ref{thm:schur functions young tableaux} and Theorem~\ref{thm:criterion schur positive young} below.

\subsection{Symmetric and Schur-positive sets}
\label{subsec:schur positive sets}
As mentioned in Section~\ref{sec:introduction}, a set $\A$ is symmetric with respect to a statistic $D:\A \to 2^{[N-1]}$ if the generating function $\Q_{D}(\A)$ is a symmetric function. Moreover, it is Schur-positive if all Schur coefficients are nonnegative integers.


One of the fundamental constructions of Schur-positive sets, regarding sets of standard Young tableaux (SYT), is due to Gessel~\cite{knuth_classes_schur_positive}. Let $\Syt(\lambda)$ denote the set of standard Young tableaux of shape $\lambda$. We draw tableaux in English notation, as in Figure~\ref{fig:SYT example}. The \emph{descent set} of $T\in \Syt(\lambda)$ is
\[
    \Des(T) := \{i \in [N-1] \mid i+1 \text{ appears in a lower row than $i$ in $T$}\}.
\]
For example, the descent set of the SYT in Figure~\ref{fig:SYT example} is $\{2,4,7,8\}$.

\begin{figure}[htb]
	\[
            \young(1247,36,58,9)
        \]
	\caption{A SYT of shape $\lambda = (4,2,2,1)$.} 
	\label{fig:SYT example}
\end{figure}

The entry in row $i$ and column $j$ of a tableau $T \in \Syt(\lambda)$ is denoted by $T_{i,j}$. In addition, we define $\row_i(T) := \{T_{i,j} \mid 1 \le j \le \lambda_i\}$ as the set of entries in the $i$-th row of $T$. For example, if we consider the SYT shown in Figure~\ref{fig:SYT example}, then $T_{3,2} = 8$ and $\row_3(T) = \{5,8\}$.

\begin{theorem}[Gessel~\cite{knuth_classes_schur_positive}]
\label{thm:schur functions young tableaux}
    For every $\lambda \vdash N$, the set $\Syt(\lambda)$ is Schur-positive with respect to $\Des$. Moreover, $\Q(\Syt(\lambda)) = s_\lambda$.
\end{theorem}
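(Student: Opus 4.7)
The plan is to prove the ``moreover'' statement $\Q_{N,\Des}(\Syt(\lambda))=s_\lambda$ directly; Schur-positivity then follows automatically, since $s_\lambda$ has only one nonzero Schur coefficient, namely $1\in\NN_0$. I would work from the combinatorial definition of the Schur function as the generating function of semistandard Young tableaux (SSYT) of shape $\lambda$,
\[
    s_\lambda = \sum_{U\in \mathrm{SSYT}(\lambda)} x^U,
\]
where $x^U=\prod_i x_i^{a_i(U)}$ and $a_i(U)$ is the number of cells of $U$ with entry $i$. The goal is then to rewrite the right-hand side as a sum, indexed by $T\in\Syt(\lambda)$, of fundamental quasisymmetric functions $F_{N,\Des(T)}$.

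To this end, I would introduce a standardization map $\mathrm{std}:\mathrm{SSYT}(\lambda)\to\Syt(\lambda)$: given $U$, order its $N$ cells by increasing entry and break ties among cells of equal value by placing those in lower rows first. This rule is coherent, because equal entries within a single row of an SSYT occupy consecutive columns and are therefore naturally ordered left to right. Replacing the entries of $U$ by $1,2,\dots,N$ according to this order produces a row-weakly and column-strictly increasing filling with entries $1,\dots,N$; this is $\mathrm{std}(U)\in \Syt(\lambda)$.

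The key step is to parametrize the fiber $\mathrm{std}^{-1}(T)$ for each fixed $T\in\Syt(\lambda)$. I would show that the map $U\mapsto (u_1\le u_2\le \cdots\le u_N)$, where $u_k$ is the entry of $U$ in the cell labeled $k$ by $T$, is a bijection from $\mathrm{std}^{-1}(T)$ onto the set of weakly increasing sequences of positive integers satisfying $u_k<u_{k+1}$ whenever $k\in\Des(T)$. The inverse is evident: place $u_k$ in the cell of $T$ labeled $k$. The nontrivial content is that the reconstructed filling is an SSYT. This reduces to checking that whenever the cell labeled $k+1$ in $T$ lies strictly below that labeled $k$ (the defining condition for $k\in\Des(T)$), the tie-breaking rule of $\mathrm{std}$ forces $u_k<u_{k+1}$, while equality $u_k=u_{k+1}$ remains compatible with column-strictness whenever $k\notin\Des(T)$. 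This column-strict verification, which requires tracing the standardization order along a single column, is the main technical obstacle.

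Having established the fiber description, grouping the SSYT sum by standardization yields
\[
    s_\lambda = \sum_{T\in\Syt(\lambda)}\;\sum_{\substack{1\le u_1\le \cdots\le u_N\\ u_k<u_{k+1}\text{ for } k\in\Des(T)}} x_{u_1}x_{u_2}\cdots x_{u_N} = \sum_{T\in\Syt(\lambda)} F_{N,\Des(T)} = \Q_{N,\Des}(\Syt(\lambda)),
\]
where the middle equality is exactly the definition of $F_{N,\Des(T)}$ from Section~\ref{subsec:definitions symmetric functions}. This proves $\Q_{N,\Des}(\Syt(\lambda))=s_\lambda$ and hence Schur-positivity, completing the argument.
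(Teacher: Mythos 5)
The paper does not actually prove this statement: it is quoted as a classical theorem of Gessel and used as a black box, so there is no internal proof to compare against. Your argument is the standard proof of that classical result --- expanding $s_\lambda$ over semistandard tableaux and grouping the monomials by standardization --- and its structure is correct, including the tie-breaking rule (for equal entries, lower rows first, then left to right, which coincides with reading the horizontal strip of equal entries from left to right). The one place where the write-up stops short is exactly the step you flag: to know that the fiber $\mathrm{std}^{-1}(T)$ consists of \emph{all} weakly increasing sequences $u_1\le\cdots\le u_N$ that are strict at $\Des(T)$, you must verify that the reconstructed filling is column-strict, i.e.\ that if $k=T_{r,c}$ and $k'=T_{r+1,c}$ then $u_k<u_{k'}$. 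This follows from the lemma that some $j$ with $k\le j<k'$ lies in $\Des(T)$: if no such $j$ were a descent, the cells of $k,k+1,\dots,k'$ would occupy weakly higher rows as the label increases, contradicting the fact that the cell of $k'$ sits strictly below that of $k$; the telescoped inequality $u_k\le\cdots<u_{j+1}\le\cdots\le u_{k'}$ then gives strictness. With that lemma inserted, together with the routine check that $\mathrm{std}$ of the reconstructed filling is again $T$, your proof is complete and is the intended (indeed the standard) one.
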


In 2015, Adin and Roichman proved the following criterion.
\begin{theorem}[{\cite[Prop.\ 9.1]{adin_roichman_fine_set}}]
\label{thm:criterion schur positive young}
    A set $\A$ is symmetric with respect to $D:\A\to 2^{[N-1]}$ if and only if
    \[
        \sum_{a \in \A} \boldsymbol{t}^{D(a)} = \sum_{\lambda \vdash N} c_\lambda \sum_{T \in \Syt(\lambda)} \boldsymbol{t}^{\Des(T)}
    \]
    for some values $c_\lambda$, where $\boldsymbol{t}^J := \prod_{j \in J} t_j$ for $J \subseteq [N-1]$. The coefficients $c_\lambda$ are the Schur-coefficients of $\A$. Moreover, $\A$ is Schur-positive if and only if $c_{\lambda} \in \NN$ for all $\lambda \vdash N$.
\end{theorem}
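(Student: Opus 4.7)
The plan is to reduce the polynomial identity in the statement to the Schur expansion of $\Q_{N,D}(\A)$ via a suitable linear isomorphism, leveraging Gessel's formula from Theorem~\ref{thm:schur functions young tableaux} together with the standard fact that $\{F_{N,J}\}_{J \subseteq [N-1]}$ is a linear basis of $\qsym_N$. Concretely, I would introduce the linear map
\[
    \Phi : \mathrm{span}_{\QQ}\{\boldsymbol{t}^J : J \subseteq [N-1]\} \;\longrightarrow\; \qsym_N, \qquad \Phi(\boldsymbol{t}^J) := F_{N,J},
\]
and observe that, since $\Phi$ sends a basis of its domain bijectively to a basis of its codomain (both spaces have dimension $2^{N-1}$), it is a linear isomorphism.

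Next I would apply $\Phi$ to both sides of the proposed identity. On the left, linearity yields
\[
    \Phi\Bigl(\sum_{a \in \A} \boldsymbol{t}^{D(a)}\Bigr) = \sum_{a \in \A} F_{N,D(a)} = \Q_{N,D}(\A),
\]
while on the right, using Gessel's identity $s_\lambda = \sum_{T \in \Syt(\lambda)} F_{N,\Des(T)}$, one obtains
\[
    \Phi\Bigl(\sum_{\lambda \vdash N} c_\lambda \sum_{T \in \Syt(\lambda)} \boldsymbol{t}^{\Des(T)}\Bigr) = \sum_{\lambda \vdash N} c_\lambda s_\lambda.
\]
Because $\Phi$ is an isomorphism, the polynomial identity stated in the theorem is equivalent to $\Q_{N,D}(\A) = \sum_{\lambda \vdash N} c_\lambda s_\lambda$.

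The equivalence in the theorem now follows routinely: if such coefficients $c_\lambda$ exist, $\Q(\A)$ is visibly a $\QQ$-linear combination of Schur functions, hence symmetric; conversely, if $\Q(\A) \in \sym_N$, expanding it uniquely in the Schur basis $\{s_\lambda : \lambda \vdash N\}$ produces constants $c_\lambda$, and pulling back through $\Phi^{-1}$ recovers the polynomial identity. Uniqueness of the Schur expansion of $\Q(\A)$ identifies the $c_\lambda$ appearing in the identity with the genuine Schur coefficients of $\A$, so that Schur-positivity reduces, by definition, to the condition $c_\lambda \in \NN_0$ for every $\lambda \vdash N$.

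The argument is essentially bookkeeping, with no genuine obstacle; the only ingredient beyond Gessel's formula is the linear independence of $\{F_{N,J}\}_{J\subseteq [N-1]}$ in $\qsym_N$, which is classical. The conceptual content of the criterion is precisely that this transport via $\Phi$ lets one verify symmetry and compute Schur coefficients at the purely combinatorial level of the ``indicator polynomials'' $\sum_{a} \boldsymbol{t}^{D(a)}$, without ever manipulating honest quasisymmetric functions.
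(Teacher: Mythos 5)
Your argument is correct: transporting the identity through the isomorphism $\boldsymbol{t}^J \mapsto F_{N,J}$ and invoking Gessel's expansion $s_\lambda = \sum_{T \in \Syt(\lambda)} F_{N,\Des(T)}$ together with the linear independence of the fundamental quasisymmetric functions is exactly the standard derivation of this criterion. Note that the paper itself offers no proof here --- it quotes the statement from Adin--Roichman \cite[Prop.\ 9.1]{adin_roichman_fine_set} --- so there is nothing to compare against beyond observing that your route coincides with the one in the cited source.
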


This criterion implies that proving the Schur-positivity of a set is achievable by establishing a statistic-preserving bijection between the set and SYTs of shapes corresponding to a specific multiset.

In this paper, we will also apply a recently formulated criterion for symmetry~\cite{my_pattern_theorem}.
\begin{definition}
\label{def:respect composition}
    Let $\A$ be a finite set with a statistic $D:\A \to 2^{[N-1]}$. The set of elements that \emph{respect} a given composition $\alpha \vDash N$, denoted $\A_D(\alpha)$, consists of the elements $a \in \A$ such that $D(a) \subseteq S_\alpha$, where $S_\alpha$ is the set corresponding to the composition $\alpha$. When $D$ is clear from the context, we may write $\A(\alpha)$ instead.
\end{definition}

\begin{lemma}
\label{lem:my symmetry criterion}
    A set $\A$ is symmetric if and only if $|\A(\alpha)| = |\A(\beta)|$ for all $\alpha \sim \beta \vDash N$.
\end{lemma}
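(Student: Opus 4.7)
The plan is to expand $\Q_{N,D}(\A)$ in the monomial quasisymmetric basis and observe that the coefficients are precisely the cardinalities $|\A(\alpha)|$; the lemma then reduces to the standard fact that a quasisymmetric function is symmetric iff its monomial expansion is constant on rearrangement classes.

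First I would recall (or derive in one line) the fundamental-to-monomial expansion
\[
    F_{N,S} \;=\; \sum_{\substack{\alpha\,\vDash\,N \\ S_\alpha \supseteq S}} M_\alpha,
\]
where $M_\alpha = \sum_{i_1 < i_2 < \cdots < i_\ell} x_{i_1}^{\alpha_1}\cdots x_{i_\ell}^{\alpha_\ell}$. This is immediate from the combinatorial definition of $F_{N,S}$: any weakly increasing sequence $i_1 \le \cdots \le i_N$ with strict ascents at every position in $S$ groups uniquely into maximal constant blocks, whose sizes form a composition $\alpha$ whose descent set $S_\alpha$ contains $S$.

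Substituting this expansion into the definition of $\Q_{N,D}(\A)$ and swapping summations yields
\[
    \Q_{N,D}(\A) \;=\; \sum_{a \in \A} F_{N,D(a)} \;=\; \sum_{\alpha \,\vDash\, N} \bigl|\{a \in \A \mid D(a) \subseteq S_\alpha\}\bigr|\, M_\alpha \;=\; \sum_{\alpha \,\vDash\, N} |\A(\alpha)|\, M_\alpha,
\]
using Definition~\ref{def:respect composition}. Thus $|\A(\alpha)|$ is exactly the coefficient of $M_\alpha$ in $\Q_{N,D}(\A)$.

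Finally I would invoke the standard characterization of $\sym_N$ as a subspace of $\qsym_N$: since the monomial symmetric functions are $m_\lambda = \sum_{\alpha \sim \lambda} M_\alpha$ and the $M_\alpha$'s are linearly independent, a quasisymmetric function $\sum_\alpha c_\alpha M_\alpha$ lies in $\sym_N$ if and only if $c_\alpha = c_\beta$ whenever $\alpha \sim \beta$. Applying this to the expansion above gives the equivalence of the lemma. There is no real obstacle here beyond correctly justifying the change of basis; the only subtlety worth stating explicitly is that this characterization of symmetric functions inside $\qsym_N$ follows from the fact that distinct partitions give distinct monomial symmetric functions, so no non-trivial relation among the $m_\lambda$'s could hide an asymmetric $M$-expansion.
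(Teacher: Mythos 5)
Your proof is correct. Note, however, that the paper does not actually prove this lemma: it cites it from an external reference (\cite{my_pattern_theorem}) and merely remarks that, although that reference treats only sets of permutations, the statement holds for arbitrary sets $\A$ with a statistic $D:\A\to 2^{[N-1]}$. Your argument is the standard one that justifies exactly that remark: the expansion $F_{N,S}=\sum_{S_\alpha\supseteq S}M_\alpha$, the resulting identity $\Q_{N,D}(\A)=\sum_{\alpha\vDash N}|\A(\alpha)|\,M_\alpha$, and the characterization of $\sym_N$ inside $\qsym_N$ via constancy of $M$-coefficients on rearrangement classes are all correct, and nowhere do you use any structure of $\A$ beyond the statistic $D$. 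So your write-up is a valid, self-contained proof of the lemma in the generality the paper needs, rather than a variant of an argument appearing in the paper itself.
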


Note that only sets of permutations are considered in~\cite{my_pattern_theorem}. However, Lemma~\ref{lem:my symmetry criterion} applies to other sets as well.


    

    

Another useful result about symmetric sets and symmetric functions is due to Bloom and Sagan:
\begin{lemma}[Bloom and Sagan {\cite[Lemma 2.2]{BloomSagan}}]
\label{lem:singleton not symmetric}
    
    For every set $S \subseteq [N-1]$, the function $F_{N,S}$ is symmetric if and only if $S=[N-1]$ or $S=\emptyset$.
\end{lemma}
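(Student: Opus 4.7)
The plan is to apply Lemma~\ref{lem:my symmetry criterion} to a cleverly chosen one-element set. For the easy direction I would simply recall that $F_\emptyset$ equals the complete homogeneous symmetric function of degree $N$ while $F_{[N-1]}$ equals the elementary symmetric function of degree $N$, both of which are manifestly symmetric.

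For the converse, assume $\emptyset \ne S \ne [N-1]$. I would set $\A := \{a\}$ and $D(a) := S$; then $\Q_{N,D}(\A) = F_S$, so the symmetry of $F_S$ is equivalent to the symmetry of $\A$ in the sense of Lemma~\ref{lem:my symmetry criterion}. Since $\A(\alpha) = \A$ exactly when $S \subseteq S_\alpha$ and is empty otherwise, the criterion reduces the problem to exhibiting two equivalent compositions $\alpha \sim \beta \vDash N$ satisfying $S \subseteq S_\alpha$ but $S \not\subseteq S_\beta$.

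To construct such a pair, I would pick $i \in [N-1] \setminus S$ and $k \in S$, both of which exist by the hypothesis on $S$, and consider the compositions of $N$ whose parts are $N-2$ copies of $1$ together with a single part equal to $2$. A short partial-sum check shows that placing the $2$ in position $p \in [N-1]$ yields the set $[N-1] \setminus \{p\}$. Taking $p = i$ gives a composition $\alpha$ with $S \subseteq S_\alpha$, while the rearrangement $\beta$ obtained by moving the $2$ to position $k$ satisfies $k \in S \setminus S_\beta$. This furnishes the required distinguishing pair, so Lemma~\ref{lem:my symmetry criterion} implies $F_S$ is not symmetric. The only computational step is the routine partial-sum verification; otherwise the argument is a matter of choosing the right singleton set to feed into the existing criterion, and I do not foresee any serious obstacle.
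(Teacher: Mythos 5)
Your argument is correct. Note that the paper does not prove this lemma at all --- it is quoted from Bloom and Sagan \cite[Lemma 2.2]{BloomSagan} --- so there is no in-paper proof to compare against; what you have done is re-derive the cited result from the paper's own toolkit. The easy direction is right: $F_\emptyset = h_N$ and $F_{[N-1]} = e_N$. For the converse, your reduction to Lemma~\ref{lem:my symmetry criterion} via the singleton set $\A=\{a\}$, $D(a)=S$, is legitimate (the paper explicitly notes that the criterion applies to arbitrary sets, not just permutations), and the partial-sum computation is correct: the composition with $N-2$ ones and a single $2$ in position $p$ has associated set $[N-1]\setminus\{p\}$, so choosing $p=i\in[N-1]\setminus S$ gives $|\A(\alpha)|=1$ while $p=k\in S$ gives $|\A(\beta)|=0$, and these compositions are equivalent. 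The edge cases are harmless since for $N\le 2$ every $S\subseteq[N-1]$ is $\emptyset$ or $[N-1]$. Bloom and Sagan's original proof proceeds instead by directly comparing coefficients of suitable monomials in $F_S$; your criterion-based route is a genuinely different and somewhat slicker derivation given that Lemma~\ref{lem:my symmetry criterion} is already available, at the cost of depending on that (externally cited) criterion rather than being self-contained.
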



\section{Proof of Theorem~\ref{thm:criterion schur positive two rows}}
\label{sec:criterion two rows}

Denote $\odds(k) := \{1,3,5,\dots, 2k-1\}$ for a nonnegative integer $k$. In particular, $\odds(0) = \emptyset$.

Recall Definition~\ref{def:sparse} and Definition~\ref{def:elements with given stat}. We divide the assertions of Theorem~\ref{thm:criterion schur positive two rows} into two propositions:
\begin{lemma}
\label{lem:if schur positive two rows then sparse and good sizes}
    Let $\A$ be a symmetric set with respect to a statistic $D:\A \to 2^{[N-1]}$, and denote $n = \lfloor\frac{N}{2} \rfloor$. Assume that the Schur expansion of $\A$ is $\Q_{D}(\A) = \sum_{k=0}^n c_k s_{N-k,k}$ for some $c_k \in \ZZ$. Then
    $D$ is a sparse function, and for every sparse set $J \subseteq [N-1]$, the cardinality of the set $\A(D \supseteq J)$ depends on the size of $J$ only.
\end{lemma}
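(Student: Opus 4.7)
The plan is to invoke the Adin--Roichman criterion (Theorem~\ref{thm:criterion schur positive young}) to translate the hypothesis into the polynomial identity
\[
    \sum_{a \in \A} \boldsymbol{t}^{D(a)} \;=\; \sum_{k=0}^n c_k \sum_{T \in \Syt(N-k,k)} \boldsymbol{t}^{\Des(T)}
\]
in the variables $t_1,\dots,t_{N-1}$, and then exploit the combinatorial structure of descent sets of two-row SYTs.

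For the first assertion, I would show that every two-row SYT $T$ has a sparse descent set. Indeed, if $i,i+1 \in \Des(T)$, then $i \in \Des(T)$ forces $i+1$ into row~$2$ while $i+1 \in \Des(T)$ forces $i+1$ into row~$1$, a contradiction. Hence on the right-hand side the coefficient of $\boldsymbol{t}^J$ vanishes whenever $J$ is not sparse, and comparing with the left-hand side yields $|\A(D=J)| = 0$ for all non-sparse $J$. Thus every $D(a)$ is sparse.

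For the second assertion, I would expand
\[
    |\A(D \supseteq J)| \;=\; \sum_{J' \supseteq J} |\A(D = J')| \;=\; \sum_{k=0}^n c_k\,\bigl|\{T \in \Syt(N-k,k) : \Des(T) \supseteq J\}\bigr|,
\]
reducing everything to showing that the two-row SYT count on the right depends only on $|J|$ (and on $N,k$) for sparse $J$.

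The key step is to encode each $T \in \Syt(N-k,k)$ by its ballot word $w \in \{0,1\}^N$, with $w_i = 0$ or $1$ according to whether $i$ lies in row~$1$ or row~$2$. The SYT conditions become the ballot property (every prefix contains at least as many $0$'s as $1$'s), and the descent condition $i \in \Des(T)$ translates to $(w_i, w_{i+1}) = (0,1)$. For sparse $J$ of size $j$, the forced pairs at positions $(i, i+1)$ for $i \in J$ are pairwise disjoint, so deleting them from $w$ produces a word $w' \in \{0,1\}^{N-2j}$ with exactly $k-j$ ones; since each deleted pair contributes $0$ to the running excess of $0$'s over $1$'s, the ballot property transfers in both directions. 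This gives a bijection between $\{T \in \Syt(N-k,k) : \Des(T) \supseteq J\}$ and $\Syt(N-k-j,\,k-j)$, whose cardinality manifestly depends only on $N$, $k$, and $j=|J|$. I expect the main delicate point is verifying this ``collapse'' bijection is well-defined in both directions, which hinges precisely on the sparseness of $J$ so that the deleted pairs do not overlap.
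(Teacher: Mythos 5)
Your proposal is correct, and the first assertion is handled exactly as in the paper (descent sets of two-row tableaux are sparse, so non-sparse $J$ cannot occur as $D(a)$). For the second assertion, however, you take a genuinely different route. The paper does not enumerate two-row tableaux at all: it passes to the complementary statistic $\bar{D}$ (Lemma~\ref{lem:symmetric complement is symmetric}) and observes that $\A(D \supseteq J) = \A_{\bar{D}}(\beta)$, where $\beta$ is the composition of $[N-1]\setminus J$; for sparse $J$ of size $k$ this $\beta$ is a rearrangement of $(2^k,1^{N-2k})$, so Lemma~\ref{lem:my symmetry criterion} gives $|\A(D\supseteq J)| = |\A(D\supseteq \odds(k))|$ directly. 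Notably, that argument uses only the symmetry of $\Q_{N,D}(\A)$, not the two-row hypothesis. Your argument instead uses the two-row expansion to reduce to the tableau count $|\Syt(N-k,k)(\Des \supseteq J)|$ and evaluates it by the ballot-word collapse, obtaining the explicit value $|\Syt(N-k-j,\,k-j)|$ (empty when $k<j$). This collapse is sound --- deleting the disjoint forced $(0,1)$ pairs removes prefix excesses that are a subset of the original ones, and reinserting them at the positions dictated by $J$ only raises excesses, so the ballot property transfers both ways --- and it is essentially an iterated, arbitrary-position version of the paper's Lemma~\ref{lem:remove first descent from syt}, which the paper only proves for prepending a column. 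What your route buys is a closed-form count of two-row tableaux with prescribed sparse descents; what the paper's route buys is brevity and the observation that the counting statement is a consequence of symmetry alone.
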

\begin{lemma}
\label{lem:if sparse and good sizes then schur positive two rows}
    Let $\A$ be a finite set with a sparse statistic $D:\A \to 2^{[N-1]}$, and denote $n = \lfloor\frac{N}{2} \rfloor$. Assume that for every sparse set $J \subseteq [N-1]$, the cardinality of the set $\A(D \supseteq J)$ depends on the size of $J$ only.
    Then $\A$ is Schur-positive, and its Schur expansion is
    \[
        \Q_{D} (\A) = \sum_{k=0}^n |\A(D=\odds(k))|\; s_{N-k,k}.
    \]
\end{lemma}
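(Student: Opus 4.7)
The plan is to apply the Adin--Roichman criterion (Theorem~\ref{thm:criterion schur positive young}): the full claim reduces to verifying
\[
\sum_{a\in\A}\boldsymbol{t}^{D(a)}\;=\;\sum_{k=0}^{n}c_k\sum_{T\in\Syt(N-k,k)}\boldsymbol{t}^{\Des(T)},\qquad c_k:=|\A(D=\odds(k))|,
\]
after which Schur-positivity is automatic, since every $c_k$ is a nonnegative integer. Both sides are supported on sparse index sets: the left-hand side by hypothesis, and the right-hand side because in a two-row SYT a descent at both $i$ and $i+1$ would require $i+2$ to lie in a nonexistent third row. I would prove the identity by M\"obius-inverting into an intermediate basis in which both sides become transparent.

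The pivotal first step is a \emph{contraction bijection} showing that, for sparse $K\subseteq[N-1]$, the quantity
\[
M_{k,K}\;:=\;|\{T\in\Syt(N-k,k):\Des(T)\supseteq K\}|
\]
depends only on $m:=|K|$. Viewing $T$ as a ballot word $b_1\cdots b_N\in\{1,2\}^N$, the condition $\Des(T)\supseteq K=\{k_1<\dots<k_m\}$ freezes $b_{k_i}=1$ and $b_{k_i+1}=2$; deleting these $2m$ entries yields a ballot word of length $N-2m$ with $N-k-m$ ones and $k-m$ twos, and conversely any such word can be re-interspersed with these forced $(1,2)$-pairs at any prescribed sparse positions without violating ballotness (inside a pair the balance rises by one and returns, so it stays $\ge 0$). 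Consequently
\[
M_{k,K}=M_{k,m}:=|\Syt(N-k-m,k-m)|,\qquad \sum_{k\ge 0} M_{k,m}x^k\;=\;x^m(1-x)(1+x)^{N-2m}.
\]
M\"obius-inverting the relation $|\A(D\supseteq K)|=\sum_{K'\supseteq K}|\A(D=K')|$ and its SYT analogue, then grouping terms by $|K|=m$, yields the parallel expansions
\[
\Q(\A)=\sum_{m=0}^{n}g(m)\,Z_m,\qquad s_{N-k,k}=\sum_{m=0}^{n}M_{k,m}\,Z_m,
\]
where $g(m):=|\A(D\supseteq J)|$ is the common value on sparse $J$ of size $m$ and $Z_m:=\sum_{|K|=m,\,K\text{ sparse}}\sum_{J\subseteq K}(-1)^{|K|-|J|}F_J$. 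Hence it suffices to prove the purely numerical identity
\[
g(m)\;=\;\sum_{k=0}^{n}c_k\,M_{k,m}\qquad(0\le m\le n).
\]

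The main obstacle is this final identity. A second M\"obius inversion --- using that sparse supersets of $\odds(k)$ of size $m'$ biject with sparse subsets of size $m'-k$ of the interval $\{2k+1,\dots,N-1\}$, whose count is $\binom{N-k-m'}{m'-k}$ --- gives $c_k=\sum_{m'\ge k}(-1)^{m'-k}\binom{N-k-m'}{m'-k}g(m')$. Substituting this expression and exchanging the order of summation reduces the desired identity to
\[
\sum_{k}(-1)^{m'-k}\binom{N-k-m'}{m'-k}M_{k,m}\;=\;\delta_{m,m'}.
\]
I would verify this by a generating-function computation: writing $\binom{N-k-m'}{m'-k}=[y^{m'-k}](1+y)^{N-k-m'}$ and substituting $x=-y/(1+y)$ into the displayed generating function for $M_{k,m}$ collapses the sum to $(-1)^{m+m'}[y^{m'-m}]\tfrac{1+2y}{(1+y)^{m'-m+1}}$, which equals $\delta_{m,m'}$ thanks to the elementary identity $\binom{2s}{s}=2\binom{2s-1}{s-1}$, which annihilates $[y^s]\tfrac{1+2y}{(1+y)^{s+1}}$ for every $s\ge 1$. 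Plugging this back into the $Z_m$-expansion gives $\Q(\A)=\sum_k c_k s_{N-k,k}$, completing the proof.
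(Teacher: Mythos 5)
Your proposal is correct in substance but follows a genuinely different route from either of the paper's two proofs. The paper's first proof is an induction on $N$: it reduces everything to the sets $\odds(i)$, peels off the elements $a$ with $1\in D(a)$, and invokes an insert-a-column bijection on two-row tableaux (Lemma~\ref{lem:remove first descent from syt}); the second proof first establishes symmetry via the composition criterion (Lemma~\ref{lem:my symmetry criterion}) and then extracts the coefficients by a triangularity argument with column superstandard tableaux. You instead turn the whole statement into a closed-form computation: M\"obius inversion over sparse sets expresses both $\Q(\A)$ and each $s_{N-k,k}$ in the common intermediate family $Z_m$, your contraction bijection (which generalizes the paper's Lemma~\ref{lem:remove first descent from syt} from inserting a single initial column to deleting descent pairs at arbitrary sparse positions) shows that $|\Syt(N-k,k)(\Des\supseteq K)|$ depends only on $|K|$, and the lemma then reduces to one explicit binomial identity. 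What this buys is a self-contained, non-inductive argument that also produces the inverse transform $c_k=\sum_{m'\ge k}(-1)^{m'-k}\binom{N-k-m'}{m'-k}g(m')$ explicitly; what it costs is that the final verification is a generating-function manipulation rather than a structural argument.

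One inaccuracy needs patching. With $M_{k,m}:=|\Syt(N-k-m,k-m)|$, the identity $\sum_{k\ge0}M_{k,m}x^k=x^m(1-x)(1+x)^{N-2m}$ is false as stated: the right-hand side has nonzero (indeed negative) coefficients in degrees $k>N/2$, where $(N-k-m,k-m)$ is no longer a partition and the true count is $0$ (for $N=2$, $m=0$ one gets $1+x$ versus $1+x-x^2-x^3$). Your final computation therefore really evaluates $\sum_k(-1)^{m'-k}\binom{N-k-m'}{m'-k}\tilde M_{k,m}$ with the signed quantity $\tilde M_{k,m}=\binom{N-2m}{k-m}-\binom{N-2m}{k-m-1}$ in place of $M_{k,m}$. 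This substitution is harmless, but only because the two quantities differ solely for $k>N/2\ge n\ge m'$, where the factor $\binom{N-k-m'}{m'-k}$ has negative lower index and vanishes; this observation should be made explicit. With it added, the binomial identity $\sum_k(-1)^{m'-k}\binom{N-k-m'}{m'-k}M_{k,m}=\delta_{m,m'}$ does hold (I checked it both directly for small cases and via your substitution), and the proof goes through.
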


We prove Lemma~\ref{lem:if schur positive two rows then sparse and good sizes} in Section~\ref{subsec:proof lemma if schur positive two rows then sparse and good sizes}, and then show two proofs for Lemma~\ref{lem:if sparse and good sizes then schur positive two rows}. The first proof (presented in Section~\ref{subsec:proof if sparse and good sizes then schur positive two rows by induction}) is inductive. The second proof (presented in Section~\ref{subsec:proof if sparse and good sizes then schur positive two rows by superstandard}) is more involved, and it demonstrates the power of column superstandard tableaux, introduced by Hamaker, Pawlowski and Sagan~\cite{S3_patterns_list}, in proving Schur-positivity properties. We believe this approach can be applied in other cases as well.

Before proving these lemmas, let us prove a simple folklore lemma that will be useful for both lemmas:
\begin{lemma}
\label{lem:symmetric complement is symmetric}
    Let $\A$ be a symmetric set with respect to a statistic $D:\A \to 2^{[N-1]}$. Then $\A$ is also symmetric with respect to the complementary statistic $\bar{D}:\A \to 2^{[N-1]}$ defined by $a \mapsto [N-1]\setminus D(a)$.
\end{lemma}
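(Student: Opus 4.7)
The plan is to invoke the Adin--Roichman criterion (Theorem~\ref{thm:criterion schur positive young}) and convert the symmetry of $\A$ with respect to $D$ into symmetry with respect to $\bar D$ by a simple substitution combined with a standard SYT bijection. By hypothesis, there exist scalars $c_\lambda$ such that
\[
    \sum_{a \in \A} \boldsymbol{t}^{D(a)} = \sum_{\lambda \vdash N} c_\lambda \sum_{T \in \Syt(\lambda)} \boldsymbol{t}^{\Des(T)},
\]
and the aim is to produce the analogous identity for $\bar D$.

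The first step is to substitute $t_i \mapsto t_i^{-1}$ in the identity above and then multiply both sides by $t_1 t_2 \cdots t_{N-1}$. Because every exponent appearing in a monomial $\boldsymbol{t}^J$ lies in $\{0,1\}$ and $\boldsymbol{t}^{J} \cdot \boldsymbol{t}^{[N-1]\setminus J} = t_1 \cdots t_{N-1}$, this substitution converts each $\boldsymbol{t}^{J}$ on both sides into $\boldsymbol{t}^{[N-1]\setminus J}$. The result is
\[
    \sum_{a \in \A} \boldsymbol{t}^{\bar D(a)} = \sum_{\lambda \vdash N} c_\lambda \sum_{T \in \Syt(\lambda)} \boldsymbol{t}^{[N-1]\setminus \Des(T)}.
\]

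The second step uses the transposition bijection $T \mapsto T^t$ from $\Syt(\lambda)$ to $\Syt(\lambda')$, which satisfies $\Des(T^t) = [N-1]\setminus \Des(T)$. This identity is verified by a short case analysis on the relative position of $i$ and $i+1$ in a standard Young tableau: these two entries must lie in the same row, in the same column, in strictly-higher-and-to-the-right positions, or in strictly-lower-and-to-the-left positions, and in each case transposition flips the descent status of $i$. Applying this bijection to the inner sum and re-indexing by $\mu = \lambda'$ yields
\[
    \sum_{a \in \A} \boldsymbol{t}^{\bar D(a)} = \sum_{\mu \vdash N} c_{\mu'} \sum_{T \in \Syt(\mu)} \boldsymbol{t}^{\Des(T)},
\]
and Theorem~\ref{thm:criterion schur positive young} then gives the symmetry of $\A$ with respect to $\bar D$, together with the transposed Schur expansion $\Q_{N,\bar D}(\A) = \sum_\mu c_{\mu'}\, s_\mu$.

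The only step that is not purely formal is the identity $\Des(T^t) = [N-1]\setminus \Des(T)$, but it is classical and the case analysis above settles it in a line or two. I expect no further obstacle; alternatively, one could phrase the same argument by invoking the involution $\omega$ on $\qsym$ that sends $F_{N,S}$ to $F_{N,[N-1]\setminus S}$ and restricts to $s_\lambda \mapsto s_{\lambda'}$ on $\sym$.
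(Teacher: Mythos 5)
Your proof is correct and follows essentially the same route as the paper: apply Theorem~\ref{thm:criterion schur positive young}, complement every monomial $\boldsymbol{t}^J$ to $\boldsymbol{t}^{[N-1]\setminus J}$, and use the transposition bijection $\Syt(\lambda)\to\Syt(\lambda')$ with $\Des(T^t)=[N-1]\setminus\Des(T)$ before invoking the criterion again. The paper states these two steps without the extra justification you supply, but the argument is the same.
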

\begin{proof}
    The set $\A$ is symmetric with respect to $D$, so by Theorem~\ref{thm:criterion schur positive young} there exist values $c_{\lambda},\ \lambda \vdash N$ such that
    \[
        \sum_{a \in \A} \boldsymbol{t}^{D(a)} = \sum_{\lambda \vdash N} c_\lambda \sum_{T \in \Syt(\lambda)} \boldsymbol{t}^{\Des(T)},
    \]
    where $\boldsymbol{t}^J := \prod_{j \in J} t_j$ for $J \subseteq [N-1]$, and therefore
    \[
        \sum_{a \in \A} \boldsymbol{t}^{[N-1]\setminus D(a)} = \sum_{\lambda \vdash N} c_\lambda \sum_{T \in \Syt(\lambda)} \boldsymbol{t}^{[N-1]\setminus \Des(T)} = \sum_{\lambda \vdash N} c_\lambda \sum_{T \in \Syt(\lambda')} \boldsymbol{t}^{\Des(T)},
    \]
    where $\lambda'$ is the partition conjugate to $\lambda$.
\end{proof}

\subsection{Proof of Lemma~\ref{lem:if schur positive two rows then sparse and good sizes}}
\label{subsec:proof lemma if schur positive two rows then sparse and good sizes}
\begin{proof}[\unskip\nopunct]
    The set $\A$ is symmetric, and its Schur expansion is $\Q_{D}(\A) = \sum_{k=0}^n c_k s_{N-k,k}$. Therefore, by Theorem~\ref{thm:criterion schur positive young}, we have
    \begin{equation}\label{eqn:proof if schur positive two rows then sparse}
        \sum_{a \in \A} \boldsymbol{t}^{D(a)} = \sum_{k=0}^n c_k \sum_{T \in \Syt(N-k,k)} \boldsymbol{t}^{\Des(T)}.
    \end{equation}
    First of all, notice that the set $\Des(T)$ is sparse for all $T \in \Syt(N-k,k)$. Therefore, by Equation~\eqref{eqn:proof if schur positive two rows then sparse}, the set $D(a)$ is sparse for all $a \in \A$.
    
    It remains to show that for every sparse set $J \subseteq [N-1]$ of size $k$, we have $\left|\A(D \supseteq J)\right| = \left|\A(D \supseteq \odds(k))\right|$. Denote $\alpha = (2^k,1^{N-2k}) \vDash N$. Additionally, let $i_1 < \dots < i_{N-k-1} \in [N-1]\setminus J$ denote the elements not in $J$. Since $J$ is sparse, the differences $i_{t+1} - i_t$ for $1 \le t \le N-k-2$, as well as $i_1$ and $N - i_{N-k-1}$, are either 1 or 2. Those that are equal to 2 correspond to the elements of $J$. The composition associated to the set $[N-1]\setminus J$ is denoted by $\beta := (i_1, i_2 - i_1, i_3 -i_2, \dots, i_{N-k-1} - i_{N-k-2}, N-i_{N-k-1}) \vDash N$. By Lemma~\ref{lem:symmetric complement is symmetric}, $\A$ is symmetric with respect to the complementary statistic $\bar{D}$. Notably, the compositions $\alpha$ and $\beta$ are equivalent, as they both have $k$ occurrences of $2$ and $N-2k$ occurrences of $1$. By Lemma~\ref{lem:my symmetry criterion}, we obtain that $|\A_{\bar{D}}(\beta)| = |\A_{\bar{D}}(\alpha)|$. By Definition~\ref{def:respect composition}, we obtain that $\A_{\bar{D}}(\beta) = \A(D \supseteq J)$ and $\A_{\bar{D}}(\alpha) = \A(D \supseteq \odds(k))$. Consequently, $|\A(D \supseteq J)| = |\A(D \supseteq \odds(k))|$, as required.
\end{proof}

\subsection{First proof of Lemma~\ref{lem:if sparse and good sizes then schur positive two rows}}
\label{subsec:proof if sparse and good sizes then schur positive two rows by induction}
We start by introducing an observation, which will serve as a crucial step in our inductive argument.
\begin{lemma}
\label{lem:remove first descent from syt}
    Let $k_1 \ge k_2$ be two nonnegative integers, and let $J \subseteq [k_1 + k_2 - 1]$ be a set. Then
    \[
        \left|\Syt(k_1, k_2)(\Des \supseteq J)\right| = \left|\Syt\big(k_1+1, k_2+1\big)\big(\Des \supseteq (\{1\}\cup (J+2))\big)\right|,
    \]
    where $J+2 = \{j + 2 \mid j \in J\}$.
\end{lemma}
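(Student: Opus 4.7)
The plan is to prove Lemma~\ref{lem:remove first descent from syt} by exhibiting an explicit bijection that realizes the identity. The natural idea is: a SYT $T'$ of shape $(k_1+1,k_2+1)$ having $1\in\Des(T')$ is forced to have $T'_{1,1}=1$ and $T'_{2,1}=2$, so after chopping off its first column and subtracting $2$ from every remaining entry one recovers a SYT of shape $(k_1,k_2)$. I will build this map in the forward direction and then analyze what it does to descents.

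Concretely, I define $\Phi\colon\Syt(k_1,k_2)\to\Syt(k_1+1,k_2+1)$ by
\[
    \Phi(T)_{1,1}=1,\quad \Phi(T)_{2,1}=2,\quad \Phi(T)_{i,j+1}=T_{i,j}+2
\]
for all entries of $T$. The first check is that $\Phi(T)$ is a valid SYT: rows and columns of the shifted block remain strictly increasing because they do so in $T$, and the new first column $(1,2)^t$ is smaller than every shifted entry, which is at least $3$. The shape $(k_1+1,k_2+1)$ is a partition because $k_1\ge k_2$.

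Next I would verify the descent translation $\Des(\Phi(T))=\{1\}\cup(\Des(T)+2)$. Clearly $1\in\Des(\Phi(T))$ since $2$ lies in row $2$ while $1$ lies in row $1$; and $2\notin\Des(\Phi(T))$ since the entry $3$ occupies position $(1,2)$, which is a strictly higher row than the $2$ at $(2,1)$. For $i\ge 3$, the entries $i$ and $i+1$ of $\Phi(T)$ sit in the same rows as $i-2$ and $i-1$ of $T$, so $i\in\Des(\Phi(T))\iff i-2\in\Des(T)$. This is exactly the claimed shift.

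Finally, I would observe that $\Phi$ is a bijection between $\Syt(k_1,k_2)$ and the subset $\{T'\in\Syt(k_1+1,k_2+1):1\in\Des(T')\}$, since $1\in\Des(T')$ forces the first column of $T'$ to be $(1,2)^t$, making the inverse map (delete the first column, subtract $2$) well-defined. Because the set $\{1\}\cup(J+2)$ contains $1$, every tableau $T'$ counted on the right side of the lemma automatically lies in the image of $\Phi$. Combining this with the descent identity, $T\mapsto \Phi(T)$ restricts to a bijection between $\Syt(k_1,k_2)(\Des\supseteq J)$ and $\Syt(k_1+1,k_2+1)(\Des\supseteq\{1\}\cup(J+2))$, giving the equality of cardinalities. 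No step looks hard; the only thing to be careful about is the handling of the index $2$ and the requirement $k_1\ge k_2$, which ensures the target shape is an actual partition.
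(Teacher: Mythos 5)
Your proposal is correct and uses exactly the same bijection as the paper: prepend a column $(1,2)^{t}$ after shifting all entries up by $2$, and observe that $1\in\Des(T')$ forces the first column of $T'$ to be $(1,2)^{t}$, so the map is invertible onto the relevant set. Your write-up just spells out the descent bookkeeping and the surjectivity argument in more detail than the paper does.
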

\begin{proof}
    The proof is bijective: We associate a given tableau $T \in \Syt(k_1, k_2)$ such that $\Des(T) \supseteq J$ with the tableau $T' \in \Syt(k_1+1, k_2+1)$ defined by $T'_{1,1} = 1,\; T'_{2,1} = 2$, and for all $i \in \{1,2\}$ and $j \in [k_i]$, $T'_{i,j+1} = T_{i,j}+2$ (recall that $T_{i,j}$ is the entry in row $i$ and column $j$ of $T$). Intuitively, we first increase every entry of $T$ by $2$. Then, we insert a new column with the letters $1,2$ on the left of $T$, shifting the other columns to the right. Clearly, this process is injective, and $T'$ satisfies $T' \in \Syt(k_1+1, k_2+1)$ and $\Des(T) \supseteq \{1\}\cup (J+2)$. Furthermore, any $T' \in \Syt(k_1+1, k_2+1)$ with $\Des(T') \supseteq \{1\}\cup (J+2)$ can be obtained through this process.
\end{proof}

Now we are ready to prove Lemma~\ref{lem:if sparse and good sizes then schur positive two rows}.
\begin{proof}[First proof of Lemma~\ref{lem:if sparse and good sizes then schur positive two rows}]
    Let $\A$ be a finite set with a sparse statistic $D:\A \to 2^{[N-1]}$, and assume that for every sparse set $J \subseteq [N-1]$, the cardinality of the set $\A(D \supseteq J)$ depends on the size of $J$ only.
    By Theorem~\ref{thm:criterion schur positive young}, it suffices to show that
    \begin{equation}\label{eqn:first proof no consecutive indices is schur positive - generating function}
        \sum_{a \in \A} \boldsymbol{t}^{D(a)} = \sum_{k=0}^n |\A(D=\odds(k))| \sum_{T \in \Syt(N-k,k)} \boldsymbol{t}^{\Des(T)},
    \end{equation}
    where $n = \lfloor\frac{N}{2}\rfloor$.
    
    We prove it by induction on $N \in \NN$:
    
    For $N \le 1$, the statement holds trivially.
    
    Next, we assume that the statement holds for every value smaller than $N$ and proceed to prove it for $N$. Our goal is to establish the equation:
    \begin{equation}\label{eqn:first proof no consecutive indices is schur positive - set equality}
        \left|\A(D=J)\right| = \sum_{k=0}^n c_k \left|\Syt(N-k,k)(\Des=J)\right|
    \end{equation}
    for every set $J \subseteq [N-1]$, where $c_k = \left|\A(D=\odds(k))\right|$.
    
    We begin by considering the case $J = \emptyset$. In this case, the only SYT $T$ with $\Des(T) = \emptyset$ is the unique tableau of one row. Consequently, Equation~\eqref{eqn:first proof no consecutive indices is schur positive - set equality} simplifies to $\left|\A(D=\emptyset)\right| = \left|\A(D=\odds(0))\right|$, which obviously holds.
    
    It now remains to prove Equation~\eqref{eqn:first proof no consecutive indices is schur positive - set equality} for all $J \ne \emptyset$. By  the inclusion–exclusion principle, it suffices to prove that the following holds for all $J \ne \emptyset$:
    \begin{equation}\label{eqn:first proof no consecutive indices is schur positive - set containment}
        \left|\A(D \supseteq J)\right| = \sum_{k=0}^n c_k \left|\Syt(N-k,k)(\Des \supseteq J)\right|.
    \end{equation}
    
    The function $D$ is assumed to be sparse, and the set $\Des(T)$ is sparse for all $T \in \Syt(N-k,k)$. Therefore, if $J$ is not sparse then Equation~\eqref{eqn:first proof no consecutive indices is schur positive - set containment} holds.

    Next, let $J \ne \emptyset$ be a sparse set, and denote $|J| = i$. The set $\odds(i)$ is sparse too, and has $i$ elements. Therefore, by the assumptions of the lemma, we obtain that $\left|\A(D \supseteq J)\right| = \left|\A(D \supseteq \odds(i))\right|$.
    In addition, by Theorem~\ref{thm:schur functions young tableaux}, the set $\Syt(N-k,k)$ is Schur-positive and has the Schur expansion $\Q(\Syt(N-k,k)) = s_{N-k,k}$. Therefore, we can apply Lemma~\ref{lem:if schur positive two rows then sparse and good sizes} and deduce that
    \[
        \left|\Syt(N-k,k)(\Des \supseteq J)\right| = \left|\Syt(N-k,k)(\Des \supseteq \odds(i))\right|.
    \]
    Therefore, Equation~\ref{eqn:first proof no consecutive indices is schur positive - set containment} reduces to
    \begin{equation}\label{eqn:first proof no consecutive indices is schur positive - odds set}
        \left|\A(D \supseteq \odds(i))\right| = \sum_{k=0}^n c_k \left|\Syt(N-k,k)(\Des \supseteq \odds(i))\right|,
    \end{equation}
    where $i > 0$. Thus, the proof will be completed when we establish Equation~\eqref{eqn:first proof no consecutive indices is schur positive - odds set}.
    
    Denote $\A_1 = \{a \in \A \mid 1\in D(a)\}$. Furthermore, define a new statistic $D_1:\A_1 \to 2^{[N-3]}$ by $D_1(a) = (D(a)\setminus\{1\})-2$. Notice that $\A_1$ satisfies the requirements of Lemma~\ref{lem:if sparse and good sizes then schur positive two rows} with respect to $D_1$, where $N$ is replaced by $N-2$. Therefore, we may assume, by the induction hypothesis, that Lemma~\ref{lem:if sparse and good sizes then schur positive two rows} holds for $\A_1$. Thus, we can apply Equation~\eqref{eqn:first proof no consecutive indices is schur positive - set containment} for $\A_1$, while replacing $c_k$ with $|\A_1(D_1 = \odds(k))| = |\A(D = \odds(k+1))| = c_{k+1}$, and obtain
    \begin{equation}\label{eqn:first proof no consecutive indices is schur positive - induction}
        \left|\A_1(D_1 \supseteq J)\right| = \sum_{k=0}^{n-1} c_{k+1} \left|\Syt(N-2-k,k)(\Des \supseteq J)\right|.
    \end{equation}
    
    If we substitute $J = \odds(i-1)$ into Equation~\ref{eqn:first proof no consecutive indices is schur positive - induction}, we obtain
    \[
        \left|\A_1(D_1 \supseteq \odds(i-1))\right| = 
         \sum_{k=0}^{n-1} c_{k+1} \left|\Syt(N-2-k,k)(\Des \supseteq \odds(i-1))\right|.
    \]

    Notice that $\left|\A_1(D_1 \supseteq \odds(i-1))\right| = \left|\A(D \supseteq \odds(i))\right|$. Moreover, by Lemma~\ref{lem:remove first descent from syt}, the number of tableaux $T$ of shape $(N-k-1,k-1)$ with $\Des(T) \supseteq \odds(i-1)$ is equal to the number of tableaux $T$ of shape $(N-k,k)$ with $\Des(T) \supseteq \odds(i)$. Therefore, we can reformulate the equation and obtain that
    \[
        \left|\A(D \supseteq \odds(i))\right| = 
         \sum_{k=1}^{n} c_k \left|\Syt(N-k,k)(\Des \supseteq \odds(i))\right|.
    \]
    The only difference between this equation and Equation~\eqref{eqn:first proof no consecutive indices is schur positive - odds set} is the omission of the summand corresponding to $k=0$. However, this summand evaluates to zero and does not affect the overall expression. Indeed, $\left|\Syt(N,0)(\Des \supseteq \odds(i))\right| = 0$, since the unique SYT of shape $(N)$ has no descents.
\end{proof}

\subsection{Second proof of Lemma~\ref{lem:if sparse and good sizes then schur positive two rows}}
\label{subsec:proof if sparse and good sizes then schur positive two rows by superstandard}
As a first step of this proof, we prove that a set satisfying the conditions of Lemma~\ref{lem:if sparse and good sizes then schur positive two rows} is symmetric.
\begin{lemma}
\label{lem:sufficent condition symmetry two rows}
    Let $\A$ be a finite set with a sparse statistic $D:\A \to 2^{[N-1]}$, and denote $n = \lfloor\frac{N}{2} \rfloor$. Assume that there are constants $b_k \in \NN$ for $0 \le k \le n$, such that for every sparse set $J \subseteq [N-1]$, $|\A(D \supseteq J)| = b_{|J|}$.
    Then $\A$ is symmetric with respect to $D$.
\end{lemma}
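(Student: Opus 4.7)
The plan is to invoke Lemma~\ref{lem:my symmetry criterion}, which reduces the symmetry of $\A$ to verifying the identity $|\A_D(\alpha)| = |\A_D(\beta)|$ for every pair of equivalent compositions $\alpha \sim \beta \vDash N$.

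The first step is inclusion--exclusion on the complement $T_\alpha := [N-1]\setminus S_\alpha$. Since $D(a) \subseteq S_\alpha$ is equivalent to $D(a) \cap T_\alpha = \emptyset$, we obtain
\[
    |\A_D(\alpha)| = \sum_{J \subseteq T_\alpha} (-1)^{|J|}\, |\A(D \supseteq J)|.
\]
Because $D$ is sparse, every $D(a)$ is sparse, so $|\A(D \supseteq J)| = 0$ whenever $J$ contains two consecutive integers; and for sparse $J$ the hypothesis yields $|\A(D \supseteq J)| = b_{|J|}$. Writing $\mathcal{N}_k(\alpha)$ for the number of sparse subsets of $T_\alpha$ of size $k$, this collapses to
\[
    |\A_D(\alpha)| = \sum_{k \ge 0} (-1)^k\, b_k\, \mathcal{N}_k(\alpha).
\]

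The key step, and the main obstacle, is to check that $\mathcal{N}_k(\alpha)$ depends only on the multiset of parts of $\alpha$. For $\alpha = (\alpha_1, \ldots, \alpha_\ell)$, the complement $T_\alpha$ is a disjoint union of $\ell$ intervals of lengths $\alpha_1 - 1, \ldots, \alpha_\ell - 1$, and \emph{consecutive intervals are separated by a single position of $S_\alpha$}. Consequently, two consecutive integers in $T_\alpha$ must lie within a common interval, so a subset of $T_\alpha$ is sparse iff its intersection with each interval is sparse. Using the well-known count $\binom{m-k+1}{k}$ for sparse $k$-subsets of a length-$m$ interval,
\[
    \mathcal{N}_k(\alpha) = \sum_{\substack{k_1 + \cdots + k_\ell = k \\ k_i \ge 0}} \prod_{i=1}^{\ell} \binom{\alpha_i - k_i}{k_i},
\]
which is manifestly invariant under permutation of the parts $\alpha_i$.

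Combining these steps, $|\A_D(\alpha)|$ depends only on the multiset $\{\alpha_1,\ldots,\alpha_\ell\}$, hence $|\A_D(\alpha)| = |\A_D(\beta)|$ for all $\alpha \sim \beta$, and Lemma~\ref{lem:my symmetry criterion} finishes the proof. The only delicate point is the geometric observation that the intervals comprising $T_\alpha$ are separated by a single element of $S_\alpha$, which makes sparseness factorize cleanly across them; once that is in place, everything else is a routine inclusion--exclusion calculation.
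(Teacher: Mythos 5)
Your proof is correct, but it takes a genuinely different route from the paper's. You apply Lemma~\ref{lem:my symmetry criterion} directly to $D$, unwind the condition $D(a)\subseteq S_\alpha$ by inclusion--exclusion over subsets of the complement $T_\alpha$, and then must verify the combinatorial fact that the number of sparse $k$-subsets of $T_\alpha$ is invariant under permuting the parts of $\alpha$ --- which you do correctly via the decomposition of $T_\alpha$ into intervals of lengths $\alpha_i-1$ and the count $\binom{m-k+1}{k}$ (the observation that consecutive elements of $T_\alpha$ cannot straddle two intervals is exactly right, since distinct intervals are separated by at least one element of $S_\alpha$). The paper instead first passes to the complementary statistic $\bar{D}$ via Lemma~\ref{lem:symmetric complement is symmetric}; there the condition $\bar{D}(a)\subseteq S_\alpha$ translates \emph{verbatim} into the single containment $D(a)\supseteq [N-1]\setminus S_\alpha$, so no inclusion--exclusion is needed: $|\A_{\bar{D}}(\alpha)|$ is $0$ when some part exceeds $2$ and equals $b_k$ (with $k$ the number of parts equal to $2$) otherwise, and invariance under $\alpha\sim\beta$ is immediate. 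The paper's argument is shorter and avoids any enumeration; yours is more computational but self-contained (it does not need the complement lemma) and yields an explicit alternating formula for $|\A_D(\alpha)|$ in terms of the $b_k$. Both are valid proofs of the lemma.
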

\begin{proof}
    Following Lemma~\ref{lem:symmetric complement is symmetric}, it suffices to prove that $\A$ is symmetric with respect to the complementary statistic $\bar{D}$. As we proceed to prove it by Lemma~\ref{lem:my symmetry criterion}, let us find $|\A_{\bar{D}}(\alpha)|$ for all $\alpha = (\alpha_1, \dots, \alpha_\ell) \vDash N$.
    
    If a composition $\alpha$ has $\max_i (\alpha_i) > 2$, 
    then there exists $j \in [N-2]$ such that $j,j+1 \notin S_\alpha$, where $S_\alpha$ is the set corresponding to $\alpha$. By Definition~\ref{def:respect composition}, for every element $a \in \A_{\bar{D}}(\alpha)$ we have $\bar{D}(a) \subseteq S_\alpha$, so $j, j + 1 \notin \bar{D}(a)$. Consequently, $j,j+1 \in D(a)$, and the set $D(a)$ is not sparse. However,
    $D$ is assumed to be a sparse function, so $\A_{\bar{D}}(\alpha) = \emptyset$.
    
    Now assume that $\max_i (\alpha_i) \le 2$, and denote by $k = |\{i \mid \alpha_i = 2\}|$ the number of occurrences of $2$ in $\alpha$. Thus, we have $\ell = N - k$.
    Consider the set $J = [N-1] \setminus S_\alpha$. Notably, the set $J$ is sparse and consists of $k$ elements.
    Let $a \in \A$ be an element. We have $a \in \A_{\bar{D}}(\alpha)$ if and only if $\bar{D}(a) \subseteq S_\alpha$, or equivalently, $D(a) \supseteq J$. Thus, $|\A_{\bar{D}}(\alpha)| = |\A(D \supseteq J)| = b_k$.
    
    To conclude, if $\alpha$ contains an element larger than $2$ then $\A_{\bar{D}}(\alpha) = \emptyset$. Otherwise, the size of $\A_{\bar{D}}(\alpha)$ depends only on the number of occurrences of $2$ in $\alpha$. Therefore, $|\A_{\bar{D}}(\alpha)| = |\A_{\bar{D}}(\beta)|$ for all $\alpha \sim \beta \vDash N$. By Lemma~\ref{lem:my symmetry criterion}, we obtain that $\A$ is symmetric.
\end{proof}

As the next step of the proof, we prove that $\A$ is Schur-positive and find its Schur coefficients. For this, we define a total order on partitions $\lambda \vdash N$:
\begin{definition}
\label{def:conjugate order}
    Let $\lambda, \mu \vdash N$ be partitions of $N$. Let $\lambda_i' = |\{j \mid \lambda_j \ge i\}|$ denote the length of the $i$-th column in the Young diagram of $\lambda$. We say that $\mu$ is \emph{larger} than $\lambda$ in the \emph{conjugate order}, and denote $\mu' > \lambda'$, if there exists $i$ such that $\mu_j' = \lambda_j'$ for all $j < i$ and $\mu_i' > \lambda_i'$.
\end{definition}

Following Hamaker, Pawlowski and Sagan \cite[Section 5]{S3_patterns_list}, we define the \emph{column superstandard} Young tableau of shape $\lambda \vdash N$, obtained by filling the columns of the Young diagram of shape $\lambda$ one by one. We denote it by $T_\lambda \in \Syt(\lambda)$.
Formally, $(T_\lambda)_{i,j} = \lambda_1' + \cdots + \lambda_{j-1}' + i$. For example, if $\lambda = (4,2,2,1)$, then $T_\lambda$ is the SYT in Figure~\ref{fig:SYT_lambda example}.

\begin{figure}[htb]
	\[
            \young(1589,26,37,4)
        \]
	\caption{The SYT $T_\lambda$ for $\lambda = (4,2,2,1)$.} 
	\label{fig:SYT_lambda example}
\end{figure}

The power of these notions may be reflected by the following statement:
\begin{lemma}
\label{lem:superstandard characterize coefficients}
    Let $\lambda, \mu \vdash N$ be two partitions. Then:
    \begin{enumerate}
        \item $\Des(T_\lambda) = [N-1] \setminus \{\lambda_{1}', \lambda_{1}'+\lambda_{2}', \dots\}$.
        \item If $\Des(T) = [N-1] \setminus \{\lambda_{1}', \lambda_{1}'+\lambda_{2}', \dots\}$ for some $T \in \Syt(\mu)$, then $\mu' \ge \lambda'$. Furthermore, if $\mu = \lambda$ then $T = T_\lambda$.
    \end{enumerate}
\end{lemma}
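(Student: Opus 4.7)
The plan is to prove part (1) by a direct inspection of $T_\lambda$, and part (2) by an induction on the column index that propagates the structure of $T_\lambda$ onto $T$ until either $\mu$ overtakes $\lambda$ in the conjugate order or the two partitions (and tableaux) coincide. Throughout, set $s_j := \lambda_1' + \cdots + \lambda_j'$, so the prescribed set is $[N-1] \setminus \{s_1, s_2, \ldots\}$.

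For part (1), the definition places $s_{j-1}+1, \ldots, s_j$ in rows $1, \ldots, \lambda_j'$ of column $j$. Within a column, every $i$ with $s_{j-1} < i < s_j$ has $i+1$ one row below, producing a descent. At a boundary $i = s_j$, the successor sits at $(1, j+1)$, which is weakly above row $\lambda_j' \geq 1$, so $s_j \notin \Des(T_\lambda)$. This yields the equality claimed in (1).

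For part (2), I would induct on $i \geq 1$ to prove the following: either $\mu_j' = \lambda_j'$ for all $j < i$ and column $j$ of $T$ agrees with column $j$ of $T_\lambda$ for every $j < i$, together with $\mu_i' \geq \lambda_i'$; or $\mu > \lambda$ in the conjugate order. The base case $i = 1$ works because the consecutive descents on $[1, s_1 - 1]$ together with $T_{1,1}=1$ force $1, 2, \ldots, s_1$ into rows $1, 2, \ldots, s_1$ of column $1$ (row/column-strictness rules out any other cell). In the inductive step, once every entry at most $s_{i-1}$ sits in columns $< i$, the ascent at $s_{i-1}$ combined with the unavailability of smaller entries in columns $\geq i$ forces $s_{i-1}+1$ into the cell $(1, i)$. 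The subsequent descents on $[s_{i-1}+1, s_i - 1]$ then cascade $s_{i-1}+2, \ldots, s_i$ straight down column $i$ by the same mechanism, requiring $\mu_i' \geq \lambda_i'$. If the inequality is strict we obtain $\mu > \lambda$ and are done; otherwise equality continues the induction, and if strict inequality never occurs we conclude $\mu = \lambda$ and $T = T_\lambda$.

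The main obstacle I anticipate is the column-confinement argument at each inductive stage: ruling out placements of $s_{i-1}+1$ in column $c > i$, and likewise for each successive entry in the cascade. The key observation is that any placement of $s_{i-1}+1$ at $(r, c)$ with $c > i$ would require a strictly smaller entry at $(r, c-1)$---but all entries smaller than $s_{i-1}+1$ are committed to columns $< i$---while any $(r, i)$ with $r > 1$ requires a smaller entry at $(r-1, i)$, equally unavailable until that row is populated by the cascade itself. Once this bookkeeping is set up carefully, the entire argument reduces to a mechanical combination of SYT strictness with the hypothesis $\Des(T) = [N-1]\setminus\{s_1, s_2, \ldots\}$.
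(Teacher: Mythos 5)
Your proposal is correct, and both parts check out: part (1) is the same direct inspection the paper uses, and your part (2) is sound --- the descent run $[s_{j-1}+1,\,s_j-1]$ together with the fact that all entries smaller than the one being placed are already committed to earlier columns (or to higher cells of the current column) does force each of $s_{j-1}+1,\dots,s_j$ into rows $1,\dots,\lambda_j'$ of column $j$, and the dichotomy $\mu_j' > \lambda_j'$ (hence $\mu > \lambda$) versus $\mu_j' = \lambda_j'$ (continue) closes the induction. The paper organizes part (2) differently: it takes the \emph{minimal letter $x$ placed differently} in $T$ and $T_\lambda$ and splits into two cases according to whether $x$ sits higher-and-to-the-right or lower-and-to-the-left in $T$; the first case is refuted by comparing the status of $x-1$ in the two descent sets, and the second yields $\mu > \lambda$ by comparing column lengths. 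Your column-by-column forcing argument is more constructive and longer to write out in full, but it avoids the case split entirely and makes explicit \emph{why} no letter can ever drift up-and-right in $T$ (the paper's Case 1), since every placement is forced outright; the paper's minimal-counterexample argument is shorter on the page but leans on a couple of terse deductions (e.g., that the minimal differing letter in Case 1 must land in row $1$). Either route establishes the lemma; the only bookkeeping you still owe is the column-confinement step you already identified, which goes through exactly as you describe.
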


\begin{proof}
    The first assertion is obvious, so let us focus on the second assertion.
    
    Let $T \in \Syt(\mu)$ be a tableau, and assume that $\Des(T) = \Des(T_\lambda)$ and $T \ne T_\lambda$. We aim to show that $\mu' > \lambda'$. Let us denote the first column that differs between $T$ and $T_\lambda$ by $i$. The $i$-th column of $T_\lambda$ contains the entries $s+1,\dots,s+\lambda_{i}'$, where $s = \lambda_1' + \dots + \lambda_{i-1}'$. To prove $\mu' > \lambda'$, it suffices to show that all these entries also appear in the $i$-th column of $T$.
    
    Assume, by contradiction, that some of these entries appear in other columns of $T$. Let $x$ be the minimal such entry, and let $j \ne i$ be the column of $T$ that contains $x$. Since $s+1$ appears in the $i$-th column of $T$, we may assume that $x > s+1$. If $j < i$, this contradicts the assumption that $T$ and $T_\lambda$ agree in the first $i-1$ columns. On the other hand, if $j > i$, then $x$ appears in the first row of $T$, since it is the minimal entry of the $j$-th column of $T$. Consequently, we obtain that $x-1 \notin \Des(T)$, while $x-1 \in \Des(T_\lambda)$, which contradicts the assumption that $\Des(T) = \Des(T_\lambda)$.
\end{proof}

Lemma~\ref{lem:superstandard characterize coefficients} associates the Young diagram of shape $\lambda$ with the set $\Des(T_\lambda)$. As we will see later, this association is powerful in analysing the Schur coefficients of symmetric sets.


Standard Young tableaux with at most $2$ rows have a slightly stronger property:
\begin{lemma}
\label{lem:superstandard two rows characterize coefficients}
    Let $\lambda = (N-k_1,k_1)$ and $\mu = (N-k_2,k_2)$ be two partitions of $N$ with at most two parts each. Then:
    \begin{enumerate}
        \item $\Des(T_\lambda) = \odds(k_1)$.
        \item If $\Des(T) = \odds(k_1)$ for some $T \in \Syt(\mu)$, then $k_1 = k_2$ and $T = T_\lambda$.
    \end{enumerate}
\end{lemma}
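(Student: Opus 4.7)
The plan is to derive part (1) immediately from Lemma~\ref{lem:superstandard characterize coefficients}(1), and to reduce part (2) to a short case analysis after invoking Lemma~\ref{lem:superstandard characterize coefficients}(2).

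For part (1), I will instantiate Lemma~\ref{lem:superstandard characterize coefficients}(1) with $\lambda = (N - k_1, k_1)$, whose conjugate is $\lambda' = (2^{k_1}, 1^{N - 2 k_1})$. The partial sums $\lambda_1', \lambda_1' + \lambda_2', \dots$ are then $2, 4, \dots, 2 k_1, 2 k_1 + 1, 2 k_1 + 2, \dots, N$, and complementing in $[N - 1]$ leaves exactly $\{1, 3, \dots, 2 k_1 - 1\} = \odds(k_1)$.

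For part (2), suppose $T \in \Syt(\mu)$ with $\mu = (N - k_2, k_2)$ and $\Des(T) = \odds(k_1) = \Des(T_\lambda)$. Lemma~\ref{lem:superstandard characterize coefficients}(2) then gives $\mu \ge \lambda$ in the conjugate order, which for two-row shapes is equivalent to $k_2 \ge k_1$, and the same lemma forces $T = T_\lambda$ once $\mu = \lambda$; it therefore suffices to rule out $k_2 > k_1$. To do so, I will first use the descents $1, 3, \dots, 2 k_1 - 1 \in \Des(T)$ and row/column monotonicity to pin down $T_{1,i} = 2i - 1$ and $T_{2,i} = 2i$ for $1 \le i \le k_1$ by induction on $i$ (at step $i$, the descent $2i-1$ forces $2i-1$ into row 1 and $2i$ into row 2, and these are automatically in column $i$ since all smaller entries have already been placed in columns $1, \dots, i-1$). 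Assuming $k_2 > k_1$, the entry $y := T_{2, k_1 + 1}$ then exists and satisfies $y > 2 k_1$, and $T_{1, k_1+1}$ exists as well since $\mu_1 \ge \mu_2 = k_2 > k_1$.

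I will split on whether $y = 2 k_1 + 1$ or $y > 2 k_1 + 1$. If $y = 2 k_1 + 1$, then column monotonicity forces $T_{1, k_1 + 1}$ to be the unique integer strictly between $T_{1, k_1} = 2 k_1 - 1$ and $T_{2, k_1 + 1} = 2 k_1 + 1$, namely $2 k_1$, which is already in row 2 — contradiction. If $y > 2 k_1 + 1$, then the entries of row 2 at positions $1, \dots, k_1 + 1$ are $\{2, 4, \dots, 2 k_1, y\}$ while any further row-2 entries exceed $y$, so every integer in $[2 k_1 + 1, y - 1]$ must lie in row 1; consequently $y - 1$ is in row 1 and $y$ is in row 2, producing a descent $y - 1 \in \Des(T)$ with $y - 1 \ge 2 k_1 + 1 \notin \odds(k_1)$ — again a contradiction. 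Hence $k_2 = k_1$, $\mu = \lambda$, and $T = T_\lambda$. The only delicate step is the second subcase, where I must combine the exact location of the small row-2 entries with the no-descent condition beyond $2k_1 - 1$ to force the forbidden descent at $y - 1$.
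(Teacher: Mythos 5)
Your proposal is correct. For part (2) the paper reaches the same contradiction, but packages it differently: it takes the \emph{minimal letter $x$ placed differently} in $T$ and $T_\lambda$, disposes of the configuration where $x$ sits in a higher row of $T$ by appealing to Case~1 of the proof of Lemma~\ref{lem:superstandard characterize coefficients}, and in the remaining configuration observes that $x-1 \in \Des(T)\setminus\Des(T_\lambda)$ --- which is precisely your forbidden descent at $y-1$ (your $y=T_{2,k_1+1}$ is the paper's $x$). What your route buys is self-containedness for the two-row case: by first extracting $k_2\ge k_1$ from Lemma~\ref{lem:superstandard characterize coefficients}(2) and then reconstructing columns $1,\dots,k_1$ of $T$ by induction, you never need the case analysis on the relative position of the differing letter, at the cost of the small extra subcase $y=2k_1+1$. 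The only cosmetic point: in your induction the descent $2i-1$ is really only needed to push $2i$ into row $2$; the placement of $2i-1$ at $(1,i)$ is forced purely by the shape of the cells already occupied by $1,\dots,2i-2$, as your parenthetical essentially acknowledges. Everything else checks out, including the observation that $k_2\ge k_1$ is needed before the induction can run (otherwise the cell $(2,i)$ may not exist).
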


    

\begin{proof}
    The first assertion is obvious, so let us focus on the second assertion.

    Let $T \in \Syt(\mu)$ be a tableau, and suppose that $\Des(T) = \Des(T_\lambda)$. Given that both $T$ and $T_\lambda$ have two rows each, it suffices to show that $\row_2(T) = \row_2(T_\lambda)$. Since $\Des(T) = \{1,3,\dots,2k_1-1\}$, it follows that $\{1,3,\dots,2k_1-1\} \subseteq \row_1(T)$ and $\{2,4,\dots,2k_1\} \subseteq \row_2(T)$. Consequently, we have $2k_1+1 \in \row_1(T)$. As $T$ has no descents beyond index $2k_1-1$, all entries greater than $2k_1$ must appear in the first row. Therefore, we conclude that $\row_2(T) = \{2,4,\dots,2k_1\}$.
\end{proof}

Now we are ready to prove that if a set is symmetric with respect to a sparse statistic then it is Schur-positive:
\begin{lemma}
\label{lem:if two rows and symmetric then schur positive}
    Let $\A$ be a symmetric set with respect to a sparse statistic $D:\A \to 2^{[N-1]}$, and denote $n = \lfloor\frac{N}{2} \rfloor$. Then $\A$ is Schur-positive, and its Schur expansion is
    \[
        \Q_{D} (\A) = \sum_{k=0}^n |\A(D = \odds(k))|\; s_{N-k,k}.
    \]
\end{lemma}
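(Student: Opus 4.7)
The plan is to invoke Theorem~\ref{thm:criterion schur positive young}, which by symmetry of $\A$ produces rational coefficients $c_\lambda$ with
\[
\sum_{a \in \A} \boldsymbol{t}^{D(a)} = \sum_{\lambda \vdash N} c_\lambda \sum_{T \in \Syt(\lambda)} \boldsymbol{t}^{\Des(T)},
\]
and then to extract the coefficient of $\boldsymbol{t}^S$ on each side to obtain
\[
|\A(D=S)| = \sum_{\lambda \vdash N} c_\lambda \cdot |\Syt(\lambda)(\Des=S)|
\]
for every $S \subseteq [N-1]$. My goal is to show that $c_\lambda = 0$ whenever $\lambda$ has more than two parts, and that $c_{(N-k,k)} = |\A(D=\odds(k))|$ for $0 \le k \le n$; together these imply both the stated Schur expansion and its non-negativity. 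The strategy is to specialise the coefficient identity to $S = \Des(T_\mu)$ for each $\mu \vdash N$ and run a downward induction on $\mu$ in the conjugate order of Definition~\ref{def:conjugate order}, leveraging the sharp characterisations in Lemmas~\ref{lem:superstandard characterize coefficients} and~\ref{lem:superstandard two rows characterize coefficients}.

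First I would handle partitions with more than two parts. If $\mu_1' \ge 3$, then by Lemma~\ref{lem:superstandard characterize coefficients}(1) the non-descents of $T_\mu$ all exceed $2$, so $\{1,2\} \subseteq \Des(T_\mu)$. This set fails to be sparse, and because $D$ is sparse the left-hand side $|\A(D=\Des(T_\mu))|$ vanishes. On the right-hand side, Lemma~\ref{lem:superstandard characterize coefficients}(2) restricts the sum to $\lambda \ge \mu$, with $\lambda = \mu$ contributing only the single tableau $T_\mu$. Any $\lambda > \mu$ satisfies $\lambda_1' \ge \mu_1' \ge 3$ and hence also has more than two parts, so the induction hypothesis forces $c_\lambda = 0$. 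The identity then collapses to $c_\mu = 0$.

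For $\mu = (N-k,k)$ with $0 \le k \le n$, I would substitute $S = \odds(k)$, which equals $\Des(T_\mu)$ by Lemma~\ref{lem:superstandard two rows characterize coefficients}(1). By part~(2) of the same lemma, among two-part shapes only $(N-k,k)$ contributes, and with only the single tableau $T_{(N-k,k)}$; partitions with more than two parts have already been eliminated. The identity therefore forces $c_{(N-k,k)} = |\A(D=\odds(k))| \in \NN_0$, yielding the formula and the Schur-positivity of $\A$. I expect the main subtlety to lie in arranging the downward induction so that every strict predecessor of a multi-row $\mu$ is itself multi-row; this is precisely the implication $\lambda \ge \mu \Rightarrow \lambda_1' \ge \mu_1'$ that keeps the two-row and multi-row regimes cleanly separated.
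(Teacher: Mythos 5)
Your proposal is correct and follows essentially the same route as the paper: extract coefficients via Theorem~\ref{thm:criterion schur positive young}, substitute $S=\Des(T_\mu)$, and use Lemmas~\ref{lem:superstandard characterize coefficients} and~\ref{lem:superstandard two rows characterize coefficients} together with sparseness to kill all shapes with more than two rows and then read off $c_{(N-k,k)}=|\A(D=\odds(k))|$. The only (cosmetic) difference is that you eliminate multi-row coefficients by downward induction on the conjugate order, whereas the paper argues via a maximal $\mu$ with $c_\mu\neq 0$; both hinge on the same observation that $\lambda\ge\mu$ forces $\lambda_1'\ge\mu_1'$.
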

\begin{proof}
    The set $\A$ is assumed to be symmetric, so by Theorem~\ref{thm:criterion schur positive young}, we have
    \begin{equation}\label{eqn:proof symmetric set with no consecutive indices is schur positive}
        \sum_{a \in \A} \boldsymbol{t}^{D(a)} = \sum_{\lambda \vdash N} c_\lambda \sum_{T \in \Syt(\lambda)} \boldsymbol{t}^{\Des(T)},
    \end{equation}
    where $c_\lambda$ are the Schur coefficients of $\A$. It suffices to show that if there exists $k$ such that $\lambda=(N-k,k)$ then $c_\lambda = |\A(D = \odds(k))|$, and otherwise $c_\lambda=0$.

    If $\A = \emptyset$, then $\Q_{D}(\A) = 0$, and the statement holds. Therefore, we may assume that $\A \ne \emptyset$, and consequently, there exists $c_\lambda \ne 0$ for some partition $\lambda \vdash N$. Let $\mu \vdash N$ be a partition with $c_\mu \ne 0$, maximal in the conjugate order (i.e., such that $c_\lambda = 0$ for every $\lambda \vdash N$ with $\lambda' > \mu'$).
    
    Equation~\eqref{eqn:proof symmetric set with no consecutive indices is schur positive} implies that the equation
    \begin{equation}\label{eqn:proof symmetric set with no consecutive indices is schur positive - specific set}
        |\A(D = J)| = \sum_{\lambda \vdash N} c_\lambda \left|\Syt(\lambda)(\Des = J)\right|
    \end{equation}
    holds for all $J \subseteq [N-1]$. Let us find the right-hand side of Equation~\eqref{eqn:proof symmetric set with no consecutive indices is schur positive - specific set} when substituting $J = \Des(T_\mu)$. Let $T \in \Syt(\lambda)$ be a tableau with $\Des(T) = \Des(T_\mu)$. By Lemma~\ref{lem:superstandard characterize coefficients}, we have $T = T_\mu$ or $\lambda' > \mu'$. However, if $\lambda' > \mu'$ then $c_\lambda = 0$. Therefore, if $T \in \Syt(\lambda)$ has $\Des(T) = \Des(T_\mu)$ and $c_\lambda \ne 0$, then $T = T_\mu$. Consequently, substituting $J = \Des(T_\mu)$ into Equation~\eqref{eqn:proof symmetric set with no consecutive indices is schur positive - specific set}, we find that $|\A(D = \Des(T_\mu))| = c_\mu \neq 0$.

    We may conclude that there exists an element $a \in \A$ with $D(a) = \Des(T_\mu)$. The set $D(a)$ is sparse, so $\{1,2\} \nsubseteq D(a)$. Lemma~\ref{lem:superstandard characterize coefficients} implies that $\{1,2\} \subseteq \Des(T_\mu)$ whenever $\mu_1' >2$, so we may deduce that $\mu_1' \le 2$. Therefore, $c_\lambda = 0$ for every partition $\lambda$ with more than $2$ parts.
    
    Thus, we can reformulate Equation~\eqref{eqn:proof symmetric set with no consecutive indices is schur positive - specific set} as
    \begin{equation}\label{eqn:proof symmetric set with no consecutive indices is schur positive:equation with only two rows}
        |\A(D = J)| = \sum_{k=0}^n c_k \left|\Syt(N-k,k)(\Des = J)\right|.
    \end{equation}
    
    By Lemma~\ref{lem:superstandard two rows characterize coefficients}, since only partitions into at most two parts are involved in Equation~\eqref{eqn:proof symmetric set with no consecutive indices is schur positive:equation with only two rows}, substituting $J = \Des(T_{\lambda})$ for $\lambda = (N-k,k)$ yields $|\A(D = \odds(k))| = c_k$, as required.
\end{proof}

\begin{proof}[Second proof of Lemma~\ref{lem:if sparse and good sizes then schur positive two rows}]
    The lemma follows directly from Lemma~\ref{lem:sufficent condition symmetry two rows} together with Lemma~\ref{lem:if two rows and symmetric then schur positive}.
\end{proof}

\section{Short chords of matchings}
\label{sec:short chords in matchings}

In this section, we analyze the set of matchings $\M_{N,f}$ with respect to short chords (Recall Definition~\ref{def:matching} and Definition~\ref{def:short}). First, we apply Theorem~\ref{thm:criterion schur positive two rows} to establish Theorem~\ref{thm:matchings schur positive}, which asserts that $\M_{N,f}$ is Schur-positive. Next, we provide a bijective proof of Theorem~\ref{thm:matchings schur positive}, which will be utilized in Section~\ref{sec:refinements} to refine the Schur-positivity result. Finally, we demonstrate that the Schur expansion of $\Q_{\Short} (\M_{N,f})$ may be explicitly interpreted in terms of Bessel polynomials.

\subsection{First proof of Theorem~\ref{thm:matchings schur positive}}
\begin{proof}[\unskip\nopunct]
    Clearly, the function $\Short: \M_{N,f} \to 2^{[N-1]}$ is sparse (as defined in Definition~\ref{def:sparse}). Let $J = \{j_1, \dots, j_k\} \subseteq [N-1]$ be a sparse set, and let us enumerate the elements of $\M_{N,f}(\Short \supseteq J)$. In every matching in $\M_{N,f}(\Short \supseteq J)$, the vertices $j_i$ and $j_i + 1$ are matched for all $1 \le i \le k$, and the remaining $N-2k$ vertices can be matched in any way, subject to the condition that exactly $f$ vertices remain unmatched. Therefore, we have
    \[
        |\M_{N,f}(\Short \supseteq J)| = |\M_{N-2k,f}|,
    \]
    and thus it depends only on the size of $J$.

    By applying Theorem~\ref{thm:criterion schur positive two rows}, we conclude that $\M_{N,f}$ is Schur-positive with respect to $\Short$, with the following Schur expansion:
    \[
        \Q_{\Short} (\M_{N,f}) = \sum_{k=0}^n |\M_{N,f}(\Short = \{1,3,5,\dots, 2k-1\})|\ s_{N-k,k}.
    \]
    (While Theorem~\ref{thm:criterion schur positive two rows} sums over $0 \le k \le \lfloor\frac{N}{2} \rfloor$, this sum only goes up to $n = \frac{N-f}{2}$. However, the extra summands equal 0 and do not affect the expression.) Clearly,
    \[
        |\M_{N,f}(\Short = \{1,3,5,\dots, 2k-1\})| = |\M_{N-2k,f}(\Short = \emptyset)|,
    \]
    and we obtain the required Schur expansion.
\end{proof}

\subsection{Bijective proof of Theorem~\ref{thm:matchings schur positive}}
\label{subsec:bijective proof matchings schur positive}

Before presenting the bijective proof of Theorem~\ref{thm:matchings schur positive}, we give some definitions and notations for matchings:






\begin{definition}
\label{def:restriction of matching}
    Given a matching $m \in \M_N$, we say that a set $S \subseteq [N]$ is $m$-invariant if for every chord $(i,j) \in m$ we have $i \in S$ if and only if $j \in S$. Moreover, for a matching $m$ and an $m$-invariant set $S$ we define the \emph{restriction} of $m$ to $S$, denoted $\res_S(m)$, to be a matching on $S$ which is obtained by removing all the vertices not in $S$ from $m$.
\end{definition}

For example , consider the matching $m_1 = \{(1,2),\; (3,5),\; (4)\} \in \M_{5,1}$ as in Figure~\ref{fig:core first example before}. The set $S_1=\{1,2,4\}$ is $m_1$-invariant, and $\res_{S_1}(m_1)=\{(1,2),\; (4)\}$. On the other hand, the set $S_2 = \{1,2,3\}$ is not $m_1$-invariant, so $\res_{S_2}(m_1)$ is not defined.


\begin{observation}
\label{obs:restriction of matching to set and complement defines uniquely}
    Let $S \subseteq [N]$ be a set of vertices, let $m_1$ be a matching on $S$, and let $m_2$ be a matching on $[N]\setminus S$. Then there exists a unique matching $m \in \M_N$ such that $\res_S(m) = m_1$ and $\res_{[N] \setminus S}(m) = m_2$.
\end{observation}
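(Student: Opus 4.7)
The plan is to construct $m$ explicitly as the union of the block sets of $m_1$ and $m_2$, and then verify existence and uniqueness separately.

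For existence, I will define $m := m_1 \cup m_2$, where each matching is viewed as its set of blocks (chords and singletons). Since $S$ and $[N]\setminus S$ partition $[N]$, every block of $m_1$ lies entirely in $S$ and every block of $m_2$ lies entirely in $[N]\setminus S$, so the blocks of $m_1$ and $m_2$ are pairwise disjoint and their union covers $[N]$. Thus $m$ is a genuine partition of $[N]$ into blocks of size $1$ or $2$, i.e.\ a matching in $\M_N$. The set $S$ is clearly $m$-invariant because every chord of $m$ lies either entirely in $S$ (if it came from $m_1$) or entirely in $[N]\setminus S$ (if it came from $m_2$), so $\res_S(m)$ and $\res_{[N]\setminus S}(m)$ are both defined, and by construction they equal $m_1$ and $m_2$ respectively.

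For uniqueness, suppose $m' \in \M_N$ is any matching satisfying $\res_S(m') = m_1$ and $\res_{[N]\setminus S}(m') = m_2$. The mere fact that both restrictions are defined forces $S$ (and hence $[N]\setminus S$) to be $m'$-invariant, so every block of $m'$ lies entirely in $S$ or entirely in $[N]\setminus S$. The blocks contained in $S$ are, by definition of $\res_S$, exactly the blocks of $m_1$; analogously, the blocks contained in $[N]\setminus S$ are exactly the blocks of $m_2$. Hence $m' = m_1 \cup m_2 = m$.

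There is no real obstacle here — the statement is essentially a bookkeeping lemma whose content is that a matching decomposes along any invariant subset. The only point requiring a slight comment is that the hypotheses implicitly force $S$ to be $m'$-invariant, which is what makes the two restrictions independent and allows us to reassemble $m'$ uniquely from them. Accordingly I would keep the proof to a short paragraph, since no induction, bijection, or case analysis is needed.
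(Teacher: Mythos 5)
Your proof is correct. The paper states this as an observation and gives no proof at all, so your short argument (assembling $m$ as the union of the block sets and noting that $m$-invariance of $S$ forces every block to lie entirely on one side) simply makes explicit the routine verification the paper takes for granted.
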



Now let us present a bijection
\[
    F:\M_{N,f} \to \bigcup_{k=0}^n \M_{N-2k,f}(\Short = \emptyset) \times \Syt(N-k,k),
\]
that sends matchings $m \in \M_{N,f}$ to pairs $(m_0,T)$, where $m_0$ is a short-chord-free matching on $[N-2k]$ with $f$ unmatched vertices and $T\in \Syt(N-k,k)$ for some $0\le k \le n$, such that
$\Des T = \Short m$ for all $m$. By Theorem~\ref{thm:criterion schur positive young}, the existence of such a bijection implies Theorem~\ref{thm:matchings schur positive}.

\subsubsection{Constructing the bijection}
\label{subsec:constructing the bijection}
First, we define the core of a matching:
\begin{definition}
\label{def:core of matching}
    The reduction process for a given matching $m$ repeatedly removes any short chords of the matching until there are no short chords left. The remaining vertices are then re-indexed with natural numbers starting from $1$ while keeping their relative order, resulting in a matching denoted by $\core(m)$.
    
    A chord or vertex of $m$ is called \emph{stable} if it is not removed during the process, and unstable otherwise.
    The set of stable vertices of $m$ is denoted $\Stable(m)$.
\end{definition}

Proposition~\ref{prop:core well defined} below implies that $\core(m)$ and $\Stable(m)$ are well-defined and are independent of the order of the steps.

For a matching $m \in \M_{N,f}$, we also define $T(m)$ as the unique SYT consisting of $N$ cells arranged in two rows, such that
\[
    \row_2(T) = \{j \mid \text{the chord } (i,j) \in m \text{ is unstable}\}.
\]
Recall that when writing $(i,j) \in m$ we assume that $i<j$. Therefore, for every unstable chord $(i,j)\in m$ with $i<j$, we have $i \in \row_1(T(m))$ and $j \in \row_2(T(m))$. We define the bijection by $F(m) = (\core(m),T(m))$.

Next, we turn to provide examples of the bijection. Then, in the remaining of the subsection, we will prove that the bijection is well-defined and explore some of its properties. Section~\ref{subsec:proof of bijection} will be devoted to proving that $F$ is bijective by constructing its inverse function.

As a first example, consider the matching $m_1 = \{(1,2),\; (3,5),\; (4)\} \in \M_{5,1}$ as in Figure~\ref{fig:core first example before}.

\begin{figure}[htb]
    \begin{subfigure}{\textwidth}
        \begin{center}
            \begin{tikzpicture}[node distance={15mm}, thick, main/.style = {draw, circle, scale=1.2}]  
                \node[main] (1) {$1$};  
                \node[main] (2) [right of=1] {$2$}; 
                \node[main] (3) [right of=2] {$3$}; 
                \node[main] (4) [right of=3] {$4$}; 
                \node[main] (5) [right of=4] {$5$}; 
                
                \draw[densely dotted] (1) to [out=67,in=113,looseness=1] (2);  
                \draw (3) to [out=67,in=113,looseness=0.8] (5);
            \end{tikzpicture}
        \end{center}
        \caption{Plot of $m_1$}
        \label{fig:core first example before}
    \end{subfigure}
    \begin{subfigure}{\textwidth}
        \begin{center}
            \vspace{3ex}
            \begin{subfigure}{0.3\textwidth}
                \begin{tikzpicture}[node distance={15mm}, thick, main/.style = {draw, circle, scale=1.2}]  
                    \node[main] (1) {$1$};  
                    \node[main] (2) [right of=1] {$2$}; 
                    \node[main] (3) [right of=2] {$3$}; 
                    
                    \draw (1) to [out=67,in=113,looseness=0.8] (3);
                \end{tikzpicture}
            \end{subfigure}
            \begin{subfigure}{0.2\textwidth}
                \[
                    \young(1345,2)
                \]
                \vspace{0.1ex}
            \end{subfigure}
        \end{center}
        \caption{Plot of $\core(m_1)$ and $T(m_1)$}
        \label{fig:core first example core}
    \end{subfigure}
    \caption{Example of the bijection for $m_1 = \{(1,2),\; (3,5),\; (4)\} \in \M_{5,1}$}
    \label{fig:core first example}
\end{figure}

            
            
            
            

During the reduction process of $m_1$, the short chord $(1,2)$ is removed. Then, the vertices $3,4,5$ remain, so $\Stable(m_1) = \{3,4,5\}$. Next, the stable vertices are renumbered to $\{1,2,3\}$, so $\core(m_1) = \{(1,3),\; (2)\}$, as in Figure~\ref{fig:core first example core}. In addition, the only unstable chord of $m_1$ is $(1,2)$, so $\row_2(T(m_1))=\{2\}$. Therefore, $T(m_1)$ is the tableau presented in Figure~\ref{fig:core first example core}.

Next, consider the matching $m_2 = \{(1,7),\; (2,10),\; (3,6),\; (4,5),\; (8,9)\} \in \M_{10,0}$ as in Figure~\ref{fig:core second example before}.

\begin{figure}[htb]
    \begin{subfigure}{\textwidth}
        \begin{center}
            \begin{tikzpicture}[node distance={15mm}, thick, main/.style = {draw, circle, scale=1.1}]  
            \node[main] (1) {$1$};
            \node[main] (2) [right of=1] {$2$};
            \node[main] (3) [right of=2] {$3$};
            \node[main] (4) [right of=3] {$4$};
            \node[main] (5) [right of=4] {$5$};
            \node[main] (6) [right of=5] {$6$};
            \node[main] (7) [right of=6] {$7$};
            \node[main] (8) [right of=7] {$8$};
            \node[main] (9) [right of=8] {$9$};
            \node[main] (10) [right of=9] {$10$};
            
            \draw (1) to [out=50,in=130,looseness=0.6] (7);
            \draw (2) to [out=50,in=130,looseness=0.5] (10);
            \draw[densely dotted] (3) to [out=55,in=125,looseness=0.7] (6);
            \draw[densely dotted] (4) to [out=67,in=113,looseness=1] (5);
            \draw[densely dotted] (8) to [out=67,in=113,looseness=1] (9);
        \end{tikzpicture}
        \end{center}
        \caption{Plot of $m_2$}
        \label{fig:core second example before}
    \end{subfigure}
    \begin{subfigure}{\textwidth}
        \begin{center}
            \vspace{3ex}
            \begin{subfigure}{0.4\textwidth}
                \begin{tikzpicture}[node distance={15mm}, thick, main/.style = {draw, circle, scale=1.2}]  
                    \node[main] (1) {$1$};  
                    \node[main] (2) [right of=1] {$2$}; 
                    \node[main] (3) [right of=2] {$3$}; 
                    \node[main] (4) [right of=3] {$4$}; 
                    
                    \draw (1) to [out=67,in=113,looseness=0.8] (3);
                    \draw (2) to [out=67,in=113,looseness=0.8] (4);
                \end{tikzpicture}
            \end{subfigure}
            \begin{subfigure}{0.4\textwidth}
                \[
                    \newcommand\ten{10}
                    \young(123478\ten,569)
                \]
                \vspace{0.1ex}
            \end{subfigure}
        \end{center}
        \caption{Plot of $\core(m_2)$ and $T(m_2)$}
        \label{fig:core second example core}
    \end{subfigure}
    \caption{Example of the bijection for $m_2 = \{(1,7),\; (2,10),\; (3,6),\; (4,5),\; (8,9)\} \in \M_{10,0}$}
    \label{fig:core second example}
\end{figure}

During the reduction process of $m_2$, we first remove the short chords $(4,5)$ and $(8,9)$. Then, the chord $(3,6)$ becomes short and is subsequently removed as well. The remaining vertices are $1,2,7$ and $10$, so $\Stable(m_2) = \{1,2,7,10\}$. Next, the stable vertices are renumbered to $[4]$, so $\core(m_2) = \{(1,3),\; (2,4)\}$, as in Figure~\ref{fig:core second example core}. In addition, the unstable chords of $m_2$ are $(3,6),\; (4,5)$ and $(8,9)$, so $\row_2(T(m_2))=\{5,6,9\}$. Therefore, $T(m_2)$ is the tableau presented in Figure~\ref{fig:core second example core}.
            
            

Moving on to proving that the bijection $F$ is well-defined, first let us prove that the core of a matching is well-defined:

\begin{proposition}
\label{prop:core well defined}
    Given a matching, its stable vertices and its core are well-defined.
\end{proposition}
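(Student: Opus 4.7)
The plan is to recognize the reduction process as a terminating abstract rewriting system on matchings --- each rewrite step deletes one short chord --- and invoke Newman's lemma: termination together with local confluence yield global confluence, hence uniqueness of the terminal matching, which is exactly what defines $\Stable(m)$ and $\core(m)$.

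Termination is immediate, since each rewrite strictly decreases the number of chords. The heart of the argument is local confluence: given an intermediate matching $m'$ on a vertex set $S \subseteq [N]$ that admits two distinct short chords $c_1 = (i_1, j_1)$ and $c_2 = (i_2, j_2)$, I plan to show that the two orders of removal lead to the same matching. The key observation, which should be essentially a one-line verification, is that $c_1$ and $c_2$ are vertex-disjoint (distinct blocks of a matching share no vertex), and since each chord is short, neither endpoint of one lies in the open interval between the endpoints of the other. This forces either $j_1 < i_2$ or $j_2 < i_1$; in either arrangement, deleting $c_1$ leaves $c_2$ short in the resulting matching on $S \setminus \{i_1, j_1\}$, and symmetrically after deleting $c_2$ first. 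Both orders of removal therefore yield the matching $m' \setminus \{c_1, c_2\}$ on $S \setminus \{i_1, j_1, i_2, j_2\}$.

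Once local confluence is in hand, Newman's lemma will immediately imply that every maximal sequence of removals starting from $m$ terminates at the same short-chord-free matching on the same subset of $[N]$. This subset is $\Stable(m)$, and the matching, after re-indexing its vertices to $[|\Stable(m)|]$ while preserving their relative order, is $\core(m)$; both are therefore well-defined.

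The only delicacy I foresee --- and not really an obstacle --- is fixing the precise meaning of ``short chord'' at intermediate stages, where vertices have been removed but not re-indexed. The appropriate convention, consistent with the paper's examples (e.g.\ $(3,6) \in m_2$ becoming short only after the deletion of $(4,5)$), is to declare a chord $(i,j) \in m'$ with $i<j$ short relative to the current vertex set $S$ iff $(i,j) \cap S = \emptyset$, equivalently iff $i$ and $j$ are consecutive in the natural ordering of $S$. Under this convention the local confluence verification is routine, and the rest of the argument is immediate.
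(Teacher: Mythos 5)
Your proof is correct, but it takes a different route from the paper's. You package the argument as an abstract rewriting system and invoke Newman's lemma: termination is immediate, and local confluence reduces to the observation that two distinct short chords of a matching on a vertex set $S$ occupy disjoint pairs of $S$-consecutive vertices, so they can be deleted in either order with the same result (your verification of this, including the convention that ``short'' at an intermediate stage means consecutive in the current vertex set $S$, is sound and matches the paper's intent). The paper instead argues globally: it compares two complete reduction processes, takes the first chord $e_{\ell_0}$ removed by one process but not the other, notes that every vertex strictly between its endpoints is an endpoint of an earlier-removed (hence commonly removed) chord, and concludes that $e_{\ell_0}$ would remain as a short chord in the terminal matching of the second process --- contradicting that the process runs until no short chords remain. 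The trade-off is the usual one: your local diamond check is essentially trivial and the structure of the argument is completely standard, but you must cite or reprove Newman's lemma; the paper's minimal-counterexample argument is self-contained and in fact directly shows the slightly stronger statement that the \emph{set of removed chords} is the same for any two reduction processes. Both establish exactly the claimed well-definedness of $\Stable(m)$ and $\core(m)$.
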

\begin{proof}
    Let $m \in \M_N$ be a matching. Denote the chords that are removed during a given reduction process of $m$ by $e_1, \dots, e_k$, and denote $e_\ell = (i_\ell,j_\ell)$. Moreover, denote the chords that are removed during another reduction process by $e_1', \dots, e_{k'}'$, and denote $e_\ell' = (i_\ell',j_\ell')$. It suffices to prove that $e_\ell \in \{e_1', \dots, e_{k'}'\}$ for all $1 \le \ell \le k$ (i.e., every chord that is removed during the first reduction process is removed during the second process as well). Assume by contradiction that $e_\ell \notin \{e_1', \dots, e_{k'}'\}$ for some $\ell$, and denote by $\ell_0$ the minimal such $\ell$.
    That is, $e_1,\dots,e_{\ell_0-1} \in \{e_1', \dots, e_{k'}'\}$. Removing the chords $e_1, \dots, e_k$ from $m$ constitutes a valid reduction process, so the chord $e_{\ell_0}$ becomes short before it is removed. That is, $i \in \{i_1,j_1,\dots,i_{\ell_0-1},j_{\ell_0-1}\}$ for all $i_{\ell_0} < i < j_{\ell_0}$. Therefore, the chord $e_{\ell_0}$ is a short chord of the core obtained by removing $e_1', \dots, e_{k'}'$ of $m$, contradicting the requirement that the reduction process continues until there are no short chords remaining. Therefore, we may conclude that if a chord is removed during a reduction process then it is removed during any reduction process, and $\core(m)$ and $\Stable(m)$ are well-defined.
\end{proof}
\begin{corollary}
    Let $N$ and $f$ be nonnegative integers. Then the function
    \[
        F:\M_{N,f} \to \bigcup_{k=0}^n \M_{N-2k,f}(\Short = \emptyset) \times \Syt(N-k,k)
    \]
    defined by $F(m) = (\core(m),T(m))$ is well-defined.
\end{corollary}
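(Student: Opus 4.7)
The plan is to check two structural claims: that $\core(m)$ lies in $\M_{N-2k,f}(\Short=\emptyset)$ for an appropriate $k$, and that $T(m)$ is a valid SYT of shape $(N-k,k)$ for the same $k$. The well-definedness of $\core(m)$ and $\Stable(m)$ is already granted by Proposition~\ref{prop:core well defined}, so I would take those as given and focus on showing that $F(m)$ lands in the indicated codomain.

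For the first claim, I would observe that the vertices removed during a reduction process come in pairs: each removed vertex is an endpoint of a removed (hence unstable) chord whose other endpoint is also unstable. Hence if $k$ denotes the total number of unstable chords, then $|\Stable(m)| = N-2k$ and $\core(m)$ is a matching on $[N-2k]$. Unmatched vertices are never removed during the process, so $\core(m)$ still has exactly $f$ unmatched vertices. Since the reduction halts only when no short chord remains, $\core(m) \in \M_{N-2k,f}(\Short=\emptyset)$.

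The main obstacle is verifying that $T(m)$ is a legitimate SYT of shape $(N-k,k)$. Let $R = \{j_1 < j_2 < \dots < j_k\}$ be the prescribed second row, namely the closers of the unstable chords; the first row is then forced to be $[N]\setminus R$ listed in increasing order, so row monotonicity is automatic. The only nontrivial obligation is column monotonicity, which reduces to the inequality $j_c \ge 2c$ for every $c \in [k]$. I would prove this by a counting argument: each of the chords contributing $j_1,\dots,j_c$ also contributes an opener, namely an unstable vertex strictly smaller than its closer and therefore lying in $[j_c-1]$; this produces at least $2c$ distinct unstable vertices within $[j_c]$, forcing $j_c \ge 2c$. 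Taking $c=k$ additionally yields $k \le N/2$, so the shape $(N-k,k)$ is a valid partition and the pair $(\core(m),T(m))$ falls into the $k$-th summand of the codomain, completing the verification.
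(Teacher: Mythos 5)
Your proposal is correct and follows essentially the same route as the paper: take the well-definedness of $\core(m)$ and $\Stable(m)$ from Proposition~\ref{prop:core well defined}, then verify that $T(m)$ is a standard Young tableau by pairing each unstable closer in the second row with its (smaller) unstable opener in the first row. Your explicit counting inequality $j_c \ge 2c$ is just a spelled-out version of the paper's column-monotonicity step, and your check that $\core(m)$ lands in $\M_{N-2k,f}(\Short=\emptyset)$ makes explicit what the paper leaves implicit.
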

\begin{proof}
    Let $m\in \M_{N,f}$ be a matching. By Proposition~\ref{prop:core well defined}, we obtain that $\core(m)$ and $\Stable(m)$ are well-defined. Therefore, the two-row tableau $T:=T(m)$ consisting of $N$ cells that is defined by
    \[
        \row_2(T) = \{j \mid \text{the chord } (i,j) \in m \text{ is unstable}\}
    \]
    is also well-defined. In order to prove that $F$ is well-defined, it remains to show that $T(m) \in \Syt(N-k,k)$, where $k$ is the number of unstable chords of $m$.

    Obviously, every letter $i \in [N]$ appears exactly once in $T(m)$, and the rows are increasing. Notice that for every entry $j \in \row_2(T(m))$ there exists $i<j$ such that $(i,j) \in m$ is an unstable chord. Therefore, every entry $j \in \row_2(T(m))$ is associated to an entry $i \in \row_1(T(m))$ such that $i<j$. Moreover, if $j\ne j' \in \row_2(T(m))$ and $(i,j),\; (i',j') \in m$ then $i \ne i'$, so the columns of $T(m)$ are increasing and $T(m)$ is a standard Young tableau. Finally, $|\row_2(T(m))| = k$, implying that $T(m) \in \Syt(N-k,k)$.
\end{proof}

After establishing that $F$ is a valid function, we turn our attention to exploring some of its properties.

\begin{lemma}
\label{lem:properties of core}
    The reduction process has the following properties:
    \begin{enumerate}[label=(\arabic*), ref=\ref{lem:properties of core}\text{ (part~}\arabic*)]
        \item \label{lem:properties of core--intersect stable} If a chord intersects another chord then it is stable.
        \item \label{lem:properties of core--chord unstable iff all midpoints unstable} A chord $(i,j)$ is stable if and only if there exists a stable vertex in $\{i+1,\dots,j-1\}$.
        \item \label{lem:properties of core--perfect non-crossing unstable} Given a matching $m$ and $i < j$, if the set $[i,j]$ is $m$-invariant \textup{(}as defined in Definition~\ref{def:restriction of matching}\textup{)} and the restricted matching $\res_{[i,j]}(m)$ is perfect and non-crossing, then $\ell \notin \Stable(m)$ for all $\ell \in [i,j]$.
    \end{enumerate}
\end{lemma}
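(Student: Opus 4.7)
The plan is to establish the three parts in order, using Proposition~\ref{prop:core well defined} throughout so that ``stable'' and ``unstable'' may be treated as intrinsic properties of vertices and chords. The common thread in the three arguments is the basic observation that a chord is removed during a reduction precisely when it is currently a short chord.

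For part~(1), I argue by contradiction. Given intersecting chords $(i_1,i_3),(i_2,i_4)\in m$ with $i_1<i_2<i_3<i_4$, fix a reduction process in which at least one of them is removed and, without loss of generality, let $(i_1,i_3)$ be the earlier of the two. At the step at which $(i_1,i_3)$ is removed it is short in the current state, so every vertex strictly between $i_1$ and $i_3$ has already been removed; in particular $i_2$ is gone. But a vertex is removed only together with its partner chord, so $(i_2,i_4)$ must have been removed earlier, contradicting the choice of $(i_1,i_3)$.

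For part~(2), both directions come from the same observation. For the ``if'' direction, if some $k\in\{i+1,\dots,j-1\}$ is stable then $k$ is never removed in any reduction, so $(i,j)$ is never short and hence never removed. For the ``only if'' direction, if $(i,j)$ is stable then it descends to a chord of $\core(m)$; since the core contains no short chords by definition of the reduction process, the images of $i$ and $j$ in $\core(m)$ cannot be consecutive, forcing at least one stable vertex strictly between $i$ and $j$ in $[N]$.

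For part~(3), I induct on $j-i+1$, which is even by perfectness of $\res_{[i,j]}(m)$. The crucial step is to extract a short chord of $m$ inside $[i,j]$. I choose an innermost chord $(k,\ell)$ of $\res_{[i,j]}(m)$: perfectness and non-crossing of the restriction force every vertex $v\in[i,j]$ with $k<v<\ell$ to be matched inside $(k,\ell)$, contradicting innermostness, so no such $v$ exists. Since $[i,j]$ is an interval, this forces $\ell=k+1$, so $(k,k+1)\in m$ is a short chord of $m$. Beginning the reduction of $m$ by deleting $(k,k+1)$ yields a matching $m'$ for which $[i,k-1]$ and $[k+2,j]$ are $m'$-invariant subintervals on which $m'$ restricts to a perfect, non-crossing matching. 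By the inductive hypothesis every vertex of $[i,k-1]\cup[k+2,j]$ is unstable in $m'$; prepending the removal of $(k,k+1)$ to any reduction process of $m'$ shows these vertices are unstable in $m$ as well, and $k,k+1$ are unstable by construction. The point requiring care is that the pieces produced by the inductive step must themselves be intervals, which is precisely what the innermost-chord choice delivers, since the short chord $(k,k+1)$ splits $[i,j]$ into two intervals.
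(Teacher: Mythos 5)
Your parts (1) and (2) are correct and essentially coincide with the paper's arguments: part (1) is the same ``whichever chord is removed first must already have lost an intermediate vertex'' contradiction, and part (2) follows in both directions from the observation that a chord is removed exactly when every vertex strictly between its endpoints has been removed.

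Part (3), however, has a genuine gap. After deleting the short chord $(k,k+1)$ you claim that $[i,k-1]$ and $[k+2,j]$ are $m'$-invariant intervals on which $m'$ restricts to a perfect non-crossing matching, and you apply the inductive hypothesis to each piece separately. This fails whenever a chord of $\res_{[i,j]}(m)$ nests over $(k,k+1)$: take $m=\{(1,4),\;(2,3)\}$ with $[i,j]=[1,4]$. The innermost chord is $(2,3)$, and after removing it the pieces $[1,1]$ and $[4,4]$ are not $m'$-invariant, since $1$ is matched to $4$. Non-crossing forbids chords that cross $(k,k+1)$, but not chords that contain it, so the interval does not split at a short chord. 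The conclusion is still true in this example ($(1,4)$ becomes short once $2$ and $3$ are gone), but your induction does not reach it; to repair it you would have to treat $[i,j]\setminus\{k,k+1\}$, after relabelling, as a single shorter interval and induct on that, rather than on two separate subintervals. The paper avoids induction altogether: it considers $S=\Stable(m)\cap[i,j]$, uses part (2) to show that $\res_S(m)$ is short-chord-free, observes that it is also perfect and non-crossing, and concludes $S=\emptyset$ because the only perfect, non-crossing, short-chord-free matching is the empty one.
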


\begin{proof} \hfill 
    \begin{enumerate}
        \item Assume that $(i_1,i_3),\; (i_2,i_4) \in m$ for $i_1 < i_2 < i_3 < i_4$. As long as the chord $(i_2,i_4)$ is not removed, the chord $(i_1,i_3)$ does not become short and cannot be removed. On the other hand, as long as the chord $(i_1,i_3)$ is not removed, the chord $(i_2,i_4)$ cannot be removed. Therefore, both chords cannot be removed during the reduction process.
        \item If the chord $(i,j)$ is unstable, then after removing some unstable vertices it becomes short, implying that every vertex between $i$ and $j$ is unstable. On the other hand, 
        if all the vertices between $i$ and $j$ are unstable, then they will eventually be removed, making the chord $(i,j)$ short, so the chord $(i,j)$ is unstable too.
        \item 
        Since the set $[i,j]$ is $m$-invariant and the restricted matching is perfect, Definition~\ref{def:restriction of matching} implies that for every $i \le i_1 \le j$ there exists $i \le i_2 \le j$ such that $i_1 \ne i_2$ and $(i_1,i_2) \in m$ or $(i_2,i_1) \in m$.
        Notice that the restricted matching $\res_S(m)$ where $S = \Stable(m) \cap [i,j]$ (i.e., the matching that consists of the stable chords $(i_1,i_2) \in m$ with $i \le i_1<i_2 \le j$) is short-chord-free, non-crossing and perfect. The only such a matching is the empty matching $\emptyset \in \M_0$, so every $i \le i_1 \le j$ is unstable.
        \qedhere
    \end{enumerate}
\end{proof}

As we proceed to apply Theorem~\ref{thm:criterion schur positive young} and establish the Schur-positivity of $\M_{N,f}$ with respect to short chords, let us prove that $F$ sends short chords of matchings to descents of SYTs, in the following sense:

\begin{proposition}
\label{prop:F statistic preserving}
    Let $m \in \M_{N,f}$ be a matching, and denote $T := T(m)$. Then $\Des T = \Short m$.
\end{proposition}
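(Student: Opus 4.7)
The plan is to exploit the fact that $T = T(m)$ has at most two rows, so $i \in \Des(T)$ if and only if $i \in \row_1(T)$ and $i+1 \in \row_2(T)$. I will establish the two inclusions separately.

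As a preliminary observation, every short chord of $m$ is unstable: if $(i, i+1) \in m$ then this chord is already short in $m$ itself, so one valid reduction process removes it in the very first step, and Proposition~\ref{prop:core well defined} guarantees that the set of unstable chords is independent of the order in which chords are removed. Moreover, the reduction process removes only short chords together with their two endpoints, so unmatched vertices are never removed and are therefore stable; consequently, every unstable vertex is matched and belongs to an unstable chord.

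For the inclusion $\Short(m) \subseteq \Des(T)$, assume $(i, i+1) \in m$. By the preliminary observation this chord is unstable, so its closer $i+1$ belongs to $\row_2(T)$ by the definition of $T(m)$. The vertex $i$ is the opener of this chord, so $i$ is not the closer of any chord of $m$; hence $i \notin \row_2(T)$ and $i \in \row_1(T)$. Therefore $i \in \Des(T)$.

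For the reverse inclusion, suppose $i \in \Des(T)$. Then $i+1 \in \row_2(T)$, so there exists $j < i+1$ with $(j, i+1) \in m$ unstable, and I claim $j = i$. Assume toward contradiction that $j < i$. By Lemma~\ref{lem:properties of core--chord unstable iff all midpoints unstable}, every vertex in $\{j+1, \dots, i\}$ is unstable, and in particular $i$ is unstable. By the preliminary observation, $i$ is therefore matched via an unstable chord. If $i$ is the closer of that chord, then $i \in \row_2(T)$, contradicting $i \in \row_1(T)$. Otherwise $i$ is the opener, say $(i, b) \in m$ with $b > i$; since $i+1$ is already the partner of $j$ we must have $b > i+1$, and then the chords $(i, b)$ and $(j, i+1)$ satisfy $j < i < i+1 < b$ and therefore intersect, so by Lemma~\ref{lem:properties of core--intersect stable} both are stable, contradicting the instability of $(i, b)$. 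Thus $j = i$ and $(i, i+1) \in m$, i.e., $i \in \Short(m)$.

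The main obstacle lies in the backward direction: one must combine the two parts of Lemma~\ref{lem:properties of core} in tandem, using the midpoint characterization to force $i$ to be unstable (and hence matched through an unstable chord), and then using the intersection rigidity to rule out the opener subcase via a crossing obstruction. The forward direction and the observation that short chords are unstable are then essentially bookkeeping.
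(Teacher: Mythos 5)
Your proof is correct and follows essentially the same route as the paper's: the forward inclusion is immediate from the instability of short chords, and the backward inclusion uses Lemma~\ref{lem:properties of core--chord unstable iff all midpoints unstable} to force $i$ onto an unstable chord and Lemma~\ref{lem:properties of core--intersect stable} to rule out a crossing, exactly as in the paper. The only difference is presentational (you split the closer/opener cases explicitly where the paper reads the opener role directly off $i \in \row_1(T)$), so no further comment is needed.
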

\begin{proof}
    Let $i \in \Short m$ be an index representing a short chord $(i,i+1) \in m$. The chord $(i,i+1)$ is unstable, so we may deduce from the definition of $T(m)$ that $i \in \row_1(T)$ and $i+1 \in \row_2(T)$. Therefore, $i \in \Des T$.
    
    On the other hand, let $i \in \Des T$ be a descent of $T$. This implies that $i \in \row_1(T)$ and $i+1 \in \row_2(T)$. Since $i+1 \in \row_2(T)$, we obtain that there exists $j < i+1$ such that $(j,i+1) \in m$ is an unstable chord. Since the chord $(j,i+1)$ is unstable and $j \le i \le i+1$, we obtain by Lemma~\ref{lem:properties of core--chord unstable iff all midpoints unstable} that $i$ is an endpoint of an unstable chord of $m$. Since $i \in \row_1(T)$, we obtain that $i$ opens an unstable chord $(i,j')$ of $m$ for some $j'>i$.
    If $(i,i+1) \notin m$ then $j<i$ and $j' > i+1$, and we obtain that the unstable chord $(j,i+1)$ intersects the chord $(i,j')$, contradicting Lemma~\ref{lem:properties of core--intersect stable}. Therefore, we may conclude that $(i,i+1) \in m$ and $i \in \Short m$.
\end{proof}




\subsubsection{Proof of bijection}
\label{subsec:proof of bijection}

In this section we will prove that the transformation $F$ defined in Section~\ref{subsec:constructing the bijection} is indeed a bijection, by constructing its inverse function.

In order to construct the inverse function, we will establish a correspondence between standard Young tableaux of two rows and ballot paths. We define ballot paths as follows:
\begin{definition}
\label{def:ballot path}
    Let $N \in \NN$ be a nonnegative integer. A \emph{ballot path} of length $N$ is a sequence of $N$ steps, where each step is either $(1,1)$ or $(1,-1)$. The path starts at the origin $(0,0)$. Namely, each step either moves one unit up and one unit right, or one unit down and one unit right. The path is said to be valid if it never goes below the x-axis, i.e., the y-coordinate of a point on the path is always non-negative.

    The set of ballot paths from $(0,0)$ to $(N,t)$ is denoted $\Pcal_{N,t}$. Given a ballot path $p \in \Pcal_{N,t}$, denote by $p_i$ the y-coordinate of $p$ after $i$ steps; in particular, $p_0=0$. The set $\UP(p) \subseteq [N]$ ($\DOWN(p) \subseteq [N]$) consists of the indices $i$ such that $p_i > p_{i-1}$ (respectively, $p_i < p_{i-1}$). Finally, define the height of the $i$-th \emph{step} of a path $p$ to be the maximum height of its two endpoints, and denote it by $\height_p(i) := \max(p_{i-1},p_i)$.
\end{definition}

The bijection between SYTs of two rows and ballot paths is direct: Associate $T \in \Syt(N-k,k)$ with the path $p(T) \in \Pcal_{N,N-2k}$ such that $\UP(p(T)) = \row_1(T)$ and $\DOWN(p(T)) = \row_2(T)$.

For example, consider the tableau $T \in \Syt(7,3)$ described in Figure~\ref{fig:core second example core}, with $\row_2(T) = \{5,6,9\}$. It is associated with the ballot path $p := p(T) \in \Pcal_{10,4}$ presented in Figure~\ref{fig:ballot path example}. The ballot path $p$ has, for example, $2 \in \UP(p)$ because $p_2 = 2 > p_1 = 1$. Conversely, $5 \in \DOWN(p)$ because $p_5 = 3 < p_4 = 4$. In addition, $\height_p(3) = \max(p_2,p_3) = 3$ while $\height_p(5) = \max(p_4,p_5) = 4$.

\begin{figure}
    \centering
    \[\begin{tikzcd}[column sep=small]
	{} &&&& \bullet &&&& \bullet && \bullet \\
	&&& \bullet && \bullet && \bullet && \bullet & {} \\
	&& \bullet &&&& \bullet &&&& {} \\
	& \bullet &&&&&&&&& {} \\
	{} &&&&&&&&&&& {}
	\arrow[shorten >= -5pt, shorten <= -5pt, "2", no head, from=4-2, to=3-3, line width=2.5pt]
	\arrow[shorten >= -5pt, shorten <= -5pt, ""{name=0, anchor=center, inner sep=0}, "3", no head, from=3-3, to=2-4]
	\arrow[shorten >= -5pt, shorten <= -5pt, ""{name=1, anchor=center, inner sep=0}, "4", no head, from=2-4, to=1-5]
	\arrow[shorten >= -5pt, shorten <= -5pt, ""{name=2, anchor=center, inner sep=0}, "6", no head, from=2-6, to=3-7]
	\arrow[shorten >= -5pt, shorten <= -5pt, "7", no head, from=3-7, to=2-8, line width=2.5pt]
	\arrow[shorten >= -5pt, shorten <= -5pt, ""{name=3, anchor=center, inner sep=0}, "8", no head, from=2-8, to=1-9]
	\arrow[shorten >= -5pt, shorten <= -5pt, ""{name=4, anchor=center, inner sep=0}, "9", no head, from=1-9, to=2-10]
	\arrow[shorten >= -5pt, shorten <= -5pt, "10", no head, from=2-10, to=1-11, line width=2.5pt]
	\arrow[shorten >= -5pt, shorten <= -5pt, from=5-1, to=5-12]
	\arrow[shorten >= -5pt, shorten <= -2pt, from=5-1, to=1-1]
	\arrow[shorten >= -5pt, shorten <= -5pt, ""{name=5, anchor=center, inner sep=0}, "5", no head, from=1-5, to=2-6]
	\arrow[shorten >= -5pt, shorten <= -2pt, "1", no head, from=5-1, to=4-2, line width=2.5pt]
	\arrow[shorten >= -5pt, shorten <= -5pt, loosely dashed, no head, from=4-2, to=4-11]
	\arrow[shorten >= -5pt, shorten <= -5pt, loosely dashed, no head, from=3-3, to=3-11]
	\arrow[shorten >= -5pt, shorten <= -5pt, loosely dashed, no head, from=2-8, to=2-11]
	\arrow[dotted, no head, from=0, to=2]
	\arrow[dotted, no head, from=3, to=4]
	\arrow[dotted, no head, from=1, to=5]
    \end{tikzcd}\]
    \caption{A ballot path $p  \in \Pcal_{10,4}$}
    \label{fig:ballot path example}
\end{figure}

Next, for a given ballot path $p \in \Pcal_{N,t}$, we construct a set $\Stable(p)$ and a perfect matching $m_{\text{unstable}}(p)$ on $[N]\setminus\Stable(p)$ as follows: For every vertex $j \in \DOWN(p)$, we match it in $m_{\text{unstable}}(p)$ to the maximal $i < j$ such that $\height_p(i)=\height_p(j)$. Since $p$ is a valid ballot path, it can be deduced that for every $j \in \DOWN(p)$, there exists a unique $i<j$ satisfying $\height_p(i)=\height_p(j)$ such that $i$ is maximal among all such elements in $[N]$. Additionally, it can be inferred from the discrete continuity of the path that this $i$ is necessarily a part of $\UP(p)$. The set $\Stable(p)$ consists of all $i \in \UP(p)$ that are not involved in any chord in $m_{\text{unstable}}(p)$. Notice that $i \in \Stable(p)$ if and only if $i \in \UP(p)$ and $\height_p(j) > \height_p(i)$ for all $j>i$, so $\left|\Stable(p)\right| = t$.


We are now ready to describe the inverse bijection of $F$, denoted
\[
    \tilde{F}:\bigcup_{k=0}^n \M_{N-2k,f}(\Short = \emptyset) \times \Syt(N-k,k) \to \M_{N,f}.
\]
Given a short-chord-free matching $m_0 \in \M_{N-2k,f}(\Short = \emptyset)$ and a tableau $T \in \Syt(N-k,k)$ for some $k$, denote $p = p(T) \in \Pcal_{N,N-2k}$.
We will construct a matching $m_{\text{stable}}$ on $\Stable(p)$ and a matching $m_{\text{unstable}}$ on $[N]\setminus\Stable(p)$, and then apply Observation~\ref{obs:restriction of matching to set and complement defines uniquely} to obtain $\tilde{F}(m_0,T) \in \M_{N,f}$. We construct these sub-matchings as follows:
\begin{itemize}
    \item $m_{\text{stable}}$: Since $p \in \Pcal_{N,N-2k}$, we infer that $\left|\Stable(p)\right| = N-2k$. Therefore, we may rename the vertices of $m_0 \in \M_{N-2k,f}$ to $\Stable(p)$ as follows: There exists a unique bijection $\varphi:[N-2k] \to \Stable(p)$ such that $i < j$ if and only if $\varphi(i) <\varphi(j)$ for all $i,j$. The matching $m_{\text{stable}}$ on $\Stable(p)$ consists of the chords $(\varphi(i), \varphi(j))$ for all $(i,j) \in m_0$.
    \item $m_{\text{unstable}} = m_{\text{unstable}}(p)$ is the matching described earlier.
\end{itemize}

For example, consider $m_0 = \{(1,3),\; (2,4)\} \in \M_{4,0}(\Short = \emptyset)$ and $T \in \Syt(N-k,k)$ presented in Figure~\ref{fig:core second example core}. As mentioned before, $T$ is associated with the ballot path $p := p(T) \in \Pcal_{10,4}$ presented in Figure~\ref{fig:ballot path example}. Therefore, we obtain the matching $m_{\text{unstable}}(p) = \{(3,6),\; (4,5),\; (8,9)\}$ (with the pairs of steps that correspond to its chords connected by dotted lines in the figure) and $\Stable(p) = \{1,2,7,10\}$ (with the steps that correspond to these vertices denoted by bold lines). Applying the order-preserving bijection $\varphi:[4] \to \{1,2,7,10\}$ on $m_0$ yields the matching $m_{\text{stable}} = \{(1,7),\; (2,10)\}$. Therefore,
\[
    \tilde{F}(m_0,T) = \{(1,7),\; (2,10),\; (3,6),\; (4,5),\; (8,9)\}
\]
is the matching presented in Figure~\ref{fig:core second example before}.

It remains to show that $\tilde{F}$ is indeed the inverse function of $F$. We will do so in two steps.

\proofstep{Step 1: $\tilde{F}\circ F = Id$}
\begin{lemma}
\label{lem:F_tilde(F)=id}
    Let $m \in \M_{N,f}$ be a matching, and denote $F(m) = (\core(m),T)$. Then $\tilde{F}(\core(m),T) = m$.
\end{lemma}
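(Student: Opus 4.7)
The plan is to unpack the definition of $\tilde F(\core(m), T(m))$ and verify piece by piece that it reconstructs $m$. Writing $p := p(T(m))$, so that $\UP(p) = \row_1(T(m))$ and $\DOWN(p) = \row_2(T(m))$, the goal reduces to proving (i) $\Stable(p) = \Stable(m)$ and (ii) the matching $m_{\text{unstable}}(p)$ equals the set of unstable chords of $m$. Once both hold, the order-preserving bijection $\varphi:[N-2k] \to \Stable(p)$ automatically sends $\core(m)$ to $\res_{\Stable(m)}(m)$ by the very definition of the core, and Observation~\ref{obs:restriction of matching to set and complement defines uniquely} then reassembles $m$ from its two restrictions.

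The central step is to interpret the heights of $p$ directly in terms of $m$: I would show by induction on $i$ that
\[
    p_i \;=\; \lvert \Stable(m) \cap [i] \rvert \;+\; \lvert \{(i',j') \in m \text{ unstable} : i' \le i < j'\} \rvert,
\]
since every vertex of $[N]$ is either stable or an endpoint of an unstable chord, and among these exactly the closers of unstable chords lie in $\row_2(T(m)) = \DOWN(p)$, which matches the three ways the right-hand side can change as $i$ increases. Now fix an unstable chord $(i,j) \in m$ with $i < j$. Lemma~\ref{lem:properties of core--chord unstable iff all midpoints unstable} says there is no stable vertex in the open interval $(i,j)$, and Lemma~\ref{lem:properties of core--intersect stable} forbids any chord from crossing $(i,j)$. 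Together these two facts force the unstable chords contributing to $p_i$ and to $p_{j-1}$ to coincide -- in both cases they are exactly $(i,j)$ itself and the unstable chords nesting around it -- so $\height_p(i) = p_i = p_{j-1} = \height_p(j)$. Thus $i$ is at least a candidate for the match of $j$ in $m_{\text{unstable}}(p)$.

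The main obstacle is the \emph{maximality} half, namely showing that no $i < i' < j$ with $i' \in \UP(p)$ satisfies $\height_p(i') = \height_p(j)$. Such an $i'$ would be unstable and therefore open some unstable chord $(i', j^*) \in m$; non-crossing forces $j^* < j$, so $(i', j^*)$ is nested inside $(i,j)$. Comparing the formula at $i$ and $i'$, the chord $(i', j^*)$ contributes $+1$ to the second sum at index $i'$ but not at index $i$, while a direct case analysis (using non-crossing once more) rules out any chord that contributes to $p_i$ without also contributing to $p_{i'}$; hence $p_{i'} > p_i = p_{j-1}$, as required. This establishes (ii), and (i) follows immediately since $\Stable(p)$ is the complement in $[N]$ of the vertex set of $m_{\text{unstable}}(p)$, which by (ii) coincides with $\Stable(m)$.
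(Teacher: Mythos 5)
Your proposal is correct and follows essentially the same route as the paper: both reduce the lemma to showing that every unstable chord $(i,j)$ of $m$ is reproduced in $m_{\text{unstable}}(p)$ via the height of the ballot path (using Lemma~\ref{lem:properties of core--intersect stable} and Lemma~\ref{lem:properties of core--chord unstable iff all midpoints unstable}), and then reassemble $m$ from its stable and unstable restrictions via Observation~\ref{obs:restriction of matching to set and complement defines uniquely}. The only difference is presentational: where the paper exhibits a bijection between $\UP(p)\cap[i,j]$ and $\DOWN(p)\cap[i,j]$, you derive the same height equality and strict inequality from an explicit closed-form formula for $p_i$, which is a clean quantitative version of the same argument.
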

\begin{proof}
    Denote $\left|\Stable(m)\right| = N-2k$, implying that $\core(m) \in \M_{N-2k,f}$ and $T \in \Syt(N-k,k)$. Moreover, denote $p=p(T) \in \Pcal_{N,N-2k}$. First, we prove that
    \begin{equation}
    \label{eqn:lemma F_tilde(F)=id}
        \res_{[N]\setminus\Stable(m)}(m) = m_{\text{unstable}}(p).
    \end{equation}
    We note that the set $[N] \setminus \Stable(m)$ is $m$-invariant, so the left-hand side of Equation~\eqref{eqn:lemma F_tilde(F)=id} is well-defined. Notice that $|[N]\setminus\Stable(m)| = |[N]\setminus\Stable(p)| = 2k$ and that $\res_{[N]\setminus\Stable(m)}(m)$ is a perfect matching. Therefore, in order to prove that Equation~\eqref{eqn:lemma F_tilde(F)=id} holds, it suffices to prove that any unstable chord of $m$ belongs to $m_{\text{unstable}}(p)$ too. Let $(i,j) \in m$ be an unstable chord. Thus, we may deduce that $i \in \UP(p)$ and $j \in \DOWN(p)$. Let $i' \in \UP(p)$ such that $i < i' < j$. By Lemma~\ref{lem:properties of core--chord unstable iff all midpoints unstable}, $i'$ is an endpoint of an unstable chord $(i',j') \in m$ with $i' < j'$. By Lemma~\ref{lem:properties of core--intersect stable}, we obtain that the chord $(i,j)$ does not intersect $(i',j')$, implying that $i < i' < j' < j$. On the other hand, every $j' \in \DOWN(p)$ with $i < j' < j$ is an endpoint of an unstable chord $(i',j') \in m$ with $i < i' < j' < j$. Therefore, we obtain a bijection from $\UP(p) \cap [i,j]$ to $\DOWN(p) \cap [i,j]$ (with $i$ matched with $j$), so $\height_p(i) = \height_p(j)$. Moreover, this bijection has the property that if $i' \in \UP(p) \cap [i,j]$ is matched with $j' \in \DOWN(p) \cap [i,j]$ then $i' < j'$ (i.e., the ascending steps of the path appear before the associated descending steps), so $\height_p(j) < \height_p(i')$ for all $i' \in \UP(p) \cap [i+1,j-1]$. We may conclude that $(i,j) \in m_{\text{unstable}}(p)$ for every unstable chord $(i,j)$ of $m$ and prove that Equation~\eqref{eqn:lemma F_tilde(F)=id} holds.

    Next, we denote $m' = \tilde{F}(\core(m),T)$ and prove that $m=m'$. From Equation~\eqref{eqn:lemma F_tilde(F)=id} we deduce that the supports of the matchings $\res_{[N]\setminus\Stable(m)}(m)$ and $m_{\text{unstable}}(p)$ are identical, and therefore $\Stable(m) = \Stable(p)$. Thus, the set $[N]\setminus\Stable(m)$ is both $m$-invariant and $m'$-invariant, and restricting each of these matchings to $[N]\setminus\Stable(m)$ results in $m_{\text{unstable}}(p)$. In addition, it can be easily verified from the descriptions of $F$ and $\tilde{F}$ that $\res_{\Stable(m)}(m) = \res_{\Stable(m)}(m')$ is the matching obtained by relabeling the vertices of $\core(m)$ with the elements of $\Stable(m)$ in increasing order. By Observation~\ref{obs:restriction of matching to set and complement defines uniquely}, we may deduce that $m=m'$.
\end{proof}

\proofstep{Step 2: $F \circ \tilde{F} = Id$}
\begin{lemma}
\label{lem:F(F_tilde)=id}
    Let $m_0 \in \M_{N-2k,f}$ with $\Short(m_0) = \emptyset$ and $T \in \Syt(N-k,k)$ for some $k$, and denote $m = \tilde{F}(m_0,T)$. Then $\core(m)=m_0$ and $T(m) = T$.
\end{lemma}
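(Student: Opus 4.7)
The plan is to establish two facts: $\Stable(m) = \Stable(p)$, which combined with the construction of $m_{\text{stable}}$ via the order-preserving bijection $\varphi$ yields $\core(m) = m_0$; and $\row_2(T(m)) = \row_2(T)$, which yields $T(m) = T$. Both rest on the nested parenthesis structure of the matching $m_{\text{unstable}}(p)$.

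First, I would prove the central structural claim: for every chord $(i,j) \in m_{\text{unstable}}(p)$, the interval $[i,j]$ is contained in $[N] \setminus \Stable(p)$. The maximality of $i$ in the definition of $m_{\text{unstable}}(p)$ forces $p_k \ge p_i$ for all $k \in [i, j-1]$; indeed, a first descent below $p_i$ would give a down-step at height $p_i$ matched with some $i' \ge i$, contradicting that $j$ is matched with $i$. Consequently any up-step $i' \in [i+1, j-1]$ satisfies $p_{i'} > p_i = p_{j-1}$, and the later position $j-1$ certifies $i' \notin \Stable(p)$. Since down-steps are never in $\Stable(p)$ by definition, we obtain $[i,j] \subseteq [N] \setminus \Stable(p)$, and the chords of $m_{\text{unstable}}(p)$ nested inside $(i,j)$ form a perfect non-crossing matching of $[i+1, j-1]$.

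The second step is to exhibit a reduction process of $m$ that removes exactly the chords of $m_{\text{unstable}}(p)$. At each stage, among the chords of $m_{\text{unstable}}(p)$ not yet removed, pick one with minimal $j-i$ in the original indexing. By the nested structure and the previous claim, every vertex lying strictly between $i$ and $j$ has already been removed (by innermost chords taken earlier), so $(i,j)$ is currently a short chord and we may remove it. Iterating exhausts $m_{\text{unstable}}(p)$. The remaining matching is $m_{\text{stable}}$ on the vertex set $\Stable(p)$, which under the order-preserving relabeling coincides with $m_0$; since $\Short(m_0) = \emptyset$, no short chords remain and the process terminates. By Proposition~\ref{prop:core well defined}, the outcome is the unique core, so $\Stable(m) = \Stable(p)$ and $\core(m) = m_0$.

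Finally, $T(m) = T$ follows readily: the unstable chords of $m$ are exactly the chords of $m_{\text{unstable}}(p)$, and their larger endpoints form $\DOWN(p) = \row_2(T)$ by the construction of $p(T)$. Hence $\row_2(T(m)) = \row_2(T)$, which determines a two-row SYT uniquely, giving $T(m) = T$. The main obstacle is the structural inclusion $[i,j] \subseteq [N] \setminus \Stable(p)$ for every $(i,j) \in m_{\text{unstable}}(p)$: once this is in hand, the reduction procedure and the comparison of second rows are essentially bookkeeping, but the inclusion itself requires the maximality-of-$i$ argument on ballot-path heights to be carried out carefully.
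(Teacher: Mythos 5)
Your proposal is correct and follows essentially the same route as the paper: both arguments hinge on the ballot-path height analysis (maximality of $i$ plus discrete continuity forcing the path to stay strictly above $p_i$ between the endpoints of a chord of $m_{\text{unstable}}(p)$), deduce the nested/non-crossing structure, exhibit a reduction process that first removes all of $[N]\setminus\Stable(p)$ and then stops because the relabeled $m_0$ is short-chord-free, and finish by identifying $\DOWN(p)$ with $\row_2(T(m))$. The only cosmetic difference is that you replace the paper's appeal to Lemma~\ref{lem:properties of core--perfect non-crossing unstable} with an explicit innermost-first removal order, which is equally valid.
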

\begin{proof}
    Denote $p=p(T) \in \Pcal_{N,N-2k}$ and $\Stable(p) = \{i_1, \dots, i_{N-2k}\}$ where $i_1 < \cdots i_{N-2k}$. We first prove that $\Stable(m) = \Stable(p)$. Notice that the matching $m_{\text{unstable}}(p)$ is non-crossing. This can be viewed visually from Figure~\ref{fig:ballot path example}, where $m_{\text{unstable}}(p)$ is denoted by horizontal dotted lines that cross the path only in their endpoints. Indeed, let $j_1 < j_2 < j_3 < j_4$ be four vertices, and assume, by contradiction, that both $(j_1,j_3)$ and $(j_2,j_4)$ belong to $m_{\text{unstable}}(p)$. By the definition of $m_{\text{unstable}}(p)$, the assumption $(j_1,j_3) \in m_{\text{unstable}}(p)$ implies that $\height_p(j_1) = \height_p(j_3)$, and $\height_p(j) \ne \height_p(j_1)$ for all $j_1 < j < j_3$. Since $\height_p(j_1+1) \ge \height_p(j_1)$ and due to the discrete continuity of the path, we obtain that $\height_p(j) > \height_p(j_1)$ for all $j_1 < j < j_3$. Consequently, we obtain $\height_p(j_2) > \height_p(j_1) = \height_p(j_3)$. Similarly, the assumption $(j_2,j_4) \in m_{\text{unstable}}(p)$ implies that $\height_p(j_3) > \height_p(j_2)$, in contradiction. Therefore, we may conclude that the matching $m_{\text{unstable}}(p)$ is non-crossing.
    
    Next, we may infer that for every $1 \le \ell < N-2k$, the segment $[i_\ell+1,i_{\ell+1}-1]$ is $m$-invariant and the restricted matching $\res_{[i_\ell+1,i_{\ell+1}-1]}(m)$ is perfect and non-crossing. By Lemma~\ref{lem:properties of core--perfect non-crossing unstable}, we obtain that $j \notin \Stable(m)$ for all $i_\ell < j < i_{\ell+1}$. Similarly, we obtain that if $j < i_1$ or $j > i_{N-2k}$ then $j \notin \Stable(m)$, and therefore $\Stable(m) \subseteq \Stable(p)$. Thus, a valid reduction process of $m$ may begin with removing every vertex not in $\Stable(p)$. We may deduce from the description of $\tilde{F}$ that $\res_{\Stable(p)}(m)$ is the matching obtained by relabeling the vertices of $m_0$ with the elements of $\Stable(p)$ in increasing order. This matching is short-chord-free, so $\Stable(m) = \Stable(p)$ and $\core(m) = m_0$.

    It remains to prove that $T(m) = T$, namely that $p' = p$, where $p' := p(T(m))$. Since $\Stable(m) = \Stable(p)$, we may infer from the description of $\tilde{F}$ that
    \[
        \DOWN(p) = \{j \mid \text{the chord } (i,j) \in m \text{ is unstable}\}.
    \]
    Thus, $\DOWN(p) = \DOWN(p')$ and therefore $p = p'$.
\end{proof}

Finally, we conclude the bijective proof:
\begin{proof}[Bijective proof of Theorem~\ref{thm:matchings schur positive}]
    By Theorem~\ref{thm:criterion schur positive young}, it suffices to prove that
    \[
        \sum_{m \in \M_{N,f}} \boldsymbol{t}^{\Short(m)} = \sum_{k=0}^N |\M_{N-2k,f}(\Short = \emptyset)| \sum_{T \in \Syt(N-k,k)} \boldsymbol{t}^{\Des(T)},
    \]
    where $\boldsymbol{t}^J := \prod_{j \in J} t_j$ for $J \subseteq [N-1]$.
    Equivalently, it suffices to present a bijection between $\M_{N,f}$ and $\bigcup_{k=0}^n \M_{N-2k,f}(\Short = \emptyset) \times \Syt(N-k,k)$, such that if $m \mapsto (m_0,T)$ then $\Short(m) = \Des(T)$. The transformation $F$ defined by $F(m) = (\core(m),T(m))$ is a bijection by Lemma~\ref{lem:F_tilde(F)=id} combined with Lemma~\ref{lem:F(F_tilde)=id}, and it satisfies $\Short(m) = \Des(T(m))$ by Proposition~\ref{prop:F statistic preserving}.
\end{proof}

\subsection{Analysis of the coefficients and relations with Bessel polynomials}
\label{subsec:schur coefficients bessel}
The \emph{Bessel polynomials} $\theta_n(x)$, sometimes called the \emph{reverse Bessel polynomials}, are given by the generating function
\[
    \frac{1}{\sqrt{1-2v}} \exp\left[x(1-\sqrt{1-2v})\right] = \sum_{n=0}^\infty \frac{v^n}{n!} \theta_n(x).
\]
They also have the explicit formula
\[
    \theta_n(x) = \sum_{k=0}^n \frac{(2n-k)!}{k!(n-k)!2^{n-k}} x^k.
\]
For more information about the Bessel polynomials, the reader is referred to~\cite{bessel_polynomials_book}.

McSorley and Feinsilver \cite[Theorem 3.5]{bessel_polynomials_mathcings} discovered the following identity:
\begin{theorem}[McSorley and Feinsilver]
    For every $n \ge 0$:
    \[
        \theta_n(x-1) = \sum_{i=0}^n h(P_{2n},i)x^i,
    \]
    where $h(P_{2n},i)$ is the number of perfect matchings on $2n$ vertices with $i$ short chords.
\end{theorem}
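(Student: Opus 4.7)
The plan is to prove the identity by inclusion–exclusion, via the substitution $x = y+1$ on the right-hand side. First, I would expand
\[
    \sum_{i=0}^n h(P_{2n},i)(y+1)^i = \sum_{k=0}^n y^k \sum_{i=k}^n \binom{i}{k} h(P_{2n},i).
\]
The inner sum has a clean combinatorial meaning: it counts pairs $(m,S)$ where $m \in \M_{2n,0}$ is a perfect matching of $[2n]$ and $S \subseteq \Short(m)$ with $|S|=k$.

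Next, I would recount these pairs by first choosing $S$ and then extending to a matching. Any such $S$ is automatically a \emph{sparse} subset of $[2n-1]$ in the sense of Definition~\ref{def:sparse}: if $i,i+1 \in \Short(m)$ then the chords $(i,i+1)$ and $(i+1,i+2)$ would share the vertex $i+1$. Conversely, every sparse $S \subseteq [2n-1]$ of size $k$ determines $k$ pairwise-disjoint short chords, and the remaining $2n-2k$ vertices can be matched in $(2n-2k-1)!! = \frac{(2n-2k)!}{2^{n-k}(n-k)!}$ ways. Using the standard count $\binom{2n-k}{k}$ for sparse subsets of $[2n-1]$ of size $k$ (obtained, e.g., by a stars-and-bars argument), the total equals
\[
    \binom{2n-k}{k} \cdot \frac{(2n-2k)!}{2^{n-k}(n-k)!} = \frac{(2n-k)!}{k!(n-k)!\,2^{n-k}},
\]
which is precisely the coefficient of $y^k$ in $\theta_n(y)$. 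This shows $\sum_i h(P_{2n},i)(y+1)^i = \theta_n(y)$, and replacing $y$ with $x-1$ yields the claimed identity.

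The main obstacle is slight: one has to notice the sparsity automaticity (so that $k$-element subsets of $\Short(m)$ correspond bijectively to sparse $k$-element subsets of $[2n-1]$ together with a short-chord assignment), invoke the count $\binom{2n-k}{k}$ of sparse subsets, and perform the factorial simplification. No deeper structure is required beyond the inclusion–exclusion substitution $x = 1 + (x-1)$, making the argument essentially a clean double-counting identity; an alternative derivation through the exponential generating function $\frac{1}{\sqrt{1-2v}}\exp[(x-1)(1-\sqrt{1-2v})]$ is possible but gains nothing over this direct approach.
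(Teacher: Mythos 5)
Your argument is correct. Note first that the paper does not prove this statement at all: it is imported verbatim from McSorley and Feinsilver \cite[Theorem 3.5]{bessel_polynomials_mathcings}, so there is no internal proof to compare against. What you have written is a valid self-contained derivation. The two counts of pairs $(m,S)$ with $S\subseteq \Short(m)$, $|S|=k$ both check out: summing over $m$ first gives $\sum_{i\ge k}\binom{i}{k}h(P_{2n},i)$, which is the coefficient of $y^k$ in $\sum_i h(P_{2n},i)(y+1)^i$; summing over $S$ first, the sparsity of $\Short(m)$ is automatic (consecutive short chords would share a vertex), the number of sparse $k$-subsets of $[2n-1]$ is $\binom{2n-k}{k}$, and the completion count $(2n-2k-1)!!=\frac{(2n-2k)!}{2^{n-k}(n-k)!}$ multiplies out to exactly $\frac{(2n-k)!}{k!(n-k)!2^{n-k}}$, the coefficient of $y^k$ in $\theta_n(y)$. (Small cases $n=1,2$ confirm the identity.) It is worth observing that the key structural fact you exploit --- that $|\{m\in\M_{2n,0}\mid \Short(m)\supseteq S\}|$ depends only on $|S|$ for sparse $S$ --- is precisely the hypothesis verified in the paper's first proof of Theorem~\ref{thm:matchings schur positive}, so your derivation fits naturally into the paper's framework and could serve as a short appendix making the cited identity self-contained.
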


An equivalent formulation of this result states that
\[
    \theta_n(x) = \sum_{i=0}^n h(P_{2n},i)(x+1)^i,
\]
so the sequence $h(P_{2n},i)$ can be thought of as the coefficients of the Taylor expansion of $\theta_n(x)$ around $x=-1$.

\begin{observation}
    For every $n \ge 0$:
    \[
        h(P_{2n},i) = |\M_{2n-i,i} (\Short=\emptyset)|.
    \]
\end{observation}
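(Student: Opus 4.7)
The plan is to exhibit an explicit bijection
\[
    \Phi : \{m \in \M_{2n,0} : |\Short(m)| = i\} \;\longrightarrow\; \M_{2n-i,i}(\Short = \emptyset),
\]
whose underlying idea is to \emph{contract} each short chord of $m$ down to a single unmatched vertex.

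\textbf{Construction.} Given $m \in \M_{2n,0}$ with short chords $(a_1, a_1+1),\dots,(a_i, a_i+1)$ listed so that $a_1 < \cdots < a_i$, the matching $\Phi(m)$ is obtained by deleting each right endpoint $a_\ell+1$, leaving $a_\ell$ as an unmatched singleton, preserving every non-short chord of $m$, and relabelling the remaining $2n-i$ vertices as $[2n-i]$ in the unique order-preserving way. The candidate inverse $\Psi$ takes $m' \in \M_{2n-i,i}(\Short=\emptyset)$ with unmatched vertices $u_1 < \cdots < u_i$, inserts a new vertex $v_\ell$ immediately after each $u_\ell$, declares $(u_\ell, v_\ell)$ a short chord, retains the other chords of $m'$, and relabels the result as $[2n]$.

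\textbf{Key verification.} The main step requiring care is showing that $\Phi(m)$ has no short chords beyond the ones we might have accidentally created. Writing $\phi(j) = j - |\{\ell : a_\ell + 1 < j\}|$ for the relabelling, a direct count yields
\[
    \phi(b) - \phi(a) \;=\; (b-a) \;-\; |\{\ell : a+1 \le a_\ell \le b-2\}|
\]
for any non-short chord $(a,b) \in m$ with $a<b$. If the image $(\phi(a), \phi(b))$ were short, then $|\{\ell : a+1 \le a_\ell \le b-2\}| = b-a-1$, i.e.\ $b-a-1$ pairwise disjoint short chords would be packed into the $b-a-1$ open positions strictly between $a$ and $b$. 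Since each short chord occupies two such positions, this count is bounded by $(b-a-1)/2$, forcing $b \le a+1$ and contradicting non-shortness. A symmetric inequality shows that the only short chords of $\Psi(m')$ are the inserted ones $(u_\ell, v_\ell)$.

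\textbf{Mutual inversion.} That $\Psi \circ \Phi = \mathrm{id}$ and $\Phi \circ \Psi = \mathrm{id}$ is then immediate from the descriptions: the data removed by $\Phi$ (namely the $i$ right endpoints $a_\ell+1$) is precisely the data produced by $\Psi$ (the $i$ vertices placed directly after each unmatched position). Counting the two sides of the claimed identity therefore gives the same cardinality. The main obstacle is the non-shortness verification for $\Phi$ above; the remainder is routine bookkeeping of indices.
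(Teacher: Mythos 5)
Your proposal is correct and is essentially the paper's own proof: the paper uses exactly this bijection, replacing each short chord with an unmatched vertex, and declares it "clearly" a bijection. Your write-up merely adds the explicit index bookkeeping and the (valid) verification that contraction creates no new short chords, a point the paper leaves to the reader.
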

\begin{proof}
    We give a bijective proof for the statement. The bijection sends a perfect matching $m \in \M_{2n,0}$ with $i$ short chords to a short-chord-free matching on $2n-i$ vertices with $i$ unmatched vertices, by replacing every short chord with an unmatched vertex. For example, the perfect matching $\{(1,5),\; (2,3),\; (4,6)\} \in \M_{6,0}$ is sent to $\{(1,4),\; (2),\; (3,5)\} \in \M_{5,1}$. Clearly, this is a bijection between the two desired sets.
\end{proof}

Therefore, we can reformulate Theorem~\ref{thm:matchings schur positive} and obtain:
\begin{corollary}
\label{cor:schur expansion bessel}
    Let $n,f \in \NN$ be nonnegative integers, and denote $N=2n+f$. Then the Schur expansion of the set $\M_{N,f}$ with respect to $\Short$ is given by the formula
    \[
        \Q_{\Short} (\M_{N,f}) = \sum_{k=0}^n h(P_{N+f-2k},f) s_{N-k,k},
    \]
    where $h(P_{N+f-2k},f)$ is the coefficient of $(x+1)^f$ in the Taylor expansion of the Bessel polynomial $\theta_{n+f-k}(x)$ around $x=-1$
\end{corollary}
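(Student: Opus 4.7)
The plan is to combine Theorem~\ref{thm:matchings schur positive} with the observation immediately preceding the corollary, and then reinterpret the resulting coefficients via the identity of McSorley and Feinsilver. The whole argument amounts to a chain of substitutions, with care being required only in the tracking of indices between the two natural parametrizations of matchings (by number of vertices versus by number of chords).

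First I would invoke Theorem~\ref{thm:matchings schur positive} to start from
\[
    \Q_{\Short}(\M_{N,f}) = \sum_{k=0}^n |\M_{N-2k,f}(\Short=\emptyset)|\, s_{N-k,k}.
\]
Applying the observation $h(P_{2m},i) = |\M_{2m-i,i}(\Short=\emptyset)|$ with $2m = N+f-2k$ and $i=f$, so that $2m-i = N-2k$ as required, replaces each coefficient by $h(P_{N+f-2k},f)$ and yields the displayed Schur expansion.

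It then remains only to identify $h(P_{N+f-2k},f)$ with the claimed Bessel coefficient. The reformulation $\theta_m(x) = \sum_{i=0}^{m} h(P_{2m},i)(x+1)^i$ noted after the McSorley--Feinsilver theorem shows that $h(P_{2m},i)$ is precisely the coefficient of $(x+1)^i$ in the Taylor expansion of $\theta_m(x)$ around $x=-1$. Substituting $2m = N+f-2k$ and using $N = 2n+f$ gives $m = n+f-k$, so the relevant Bessel polynomial is $\theta_{n+f-k}(x)$, matching the statement. Since each step is either an invocation of a previously established result or a direct substitution, there is no genuine obstacle here; the only subtlety is the index reconciliation so that the Bessel polynomial comes out with the correct subscript.
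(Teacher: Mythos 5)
Your proposal is correct and follows exactly the route the paper intends: the corollary is obtained by substituting the observation $h(P_{2m},i)=|\M_{2m-i,i}(\Short=\emptyset)|$ into the Schur expansion of Theorem~\ref{thm:matchings schur positive} and then reading off the Bessel-polynomial interpretation from the McSorley--Feinsilver identity. Your index bookkeeping ($2m=N+f-2k$, hence $m=n+f-k$) is accurate and matches the stated formula.
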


\section{Refinements of the bijection}
\label{sec:refinements}
In this section, we will utilize the bijection $F$ discussed in Section~\ref{subsec:bijective proof matchings schur positive} to refine Theorem~\ref{thm:matchings schur positive} and find many Schur-positive sets of matchings with respect to the set of short chords.
Indeed, given non-negative integers $N$ and $k$, for every short-chord-free matching $m_0 \in \M_{N-2k}(\Short = \emptyset)$, the set
\[
    \{m \in \M_{N} \mid \core(m) = m_0 \}
\]
is Schur-positive (as we will see later in Corollary~\ref{cor:closed under knuth like is schur positive}).

\subsection{Sets closed under Knuth equivalence}
As a first refinement of the Schur-positivity of $\M_{N,f}$, we study the Knuth equivalence of matchings presented in Definition \ref{def:knuth like equivalence}.
The power of this notion is reflected by the following theorem:
\begin{theorem}
\label{thm:knuth like equivalent iff same core}
    Two matchings $m_1, m_2 \in \M_N$ are Knuth equivalent if and only if $\core(m_1) = \core(m_2)$.
\end{theorem}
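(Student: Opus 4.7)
My plan is to prove the two directions separately. For the forward direction, I will verify that each elementary Knuth-like transformation preserves the core by direct analysis of the reduction process. Consider transformation~1 applied to $m \supseteq \{(i,i+1),(i+2)\}$ producing $m' \supseteq \{(i),(i+1,i+2)\}$. I will begin the reduction of each matching by removing the local short chord: what remains in $m$ lives on $[N]\setminus\{i,i+1\}$ with $i+2$ unmatched, while what remains in $m'$ lives on $[N]\setminus\{i+1,i+2\}$ with $i$ unmatched. These two partial matchings are order-isomorphic via the rank-preserving bijection sending $i+2 \leftrightarrow i$ and fixing every other vertex; in particular the chords outside $\{i,i+1,i+2\}$ are literally identical. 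Since the reduction and final relabeling depend only on the order type of the underlying matching, they produce identical cores. The argument for transformation~2 is analogous: the chord incident to $j$ on one side corresponds to the chord incident to $j$ on the other under the same isomorphism, for either relative position of $j$ with respect to $i$.

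For the backward direction I will induct on $k := (N - |\Stable(m)|)/2$, the number of unstable chord pairs of $m$. I define a canonical representative $C(m_0, k) \in \M_N$ for each short-chord-free core $m_0 \in \M_{N-2k}$: place $k$ short chords $(1,2),(3,4),\dots,(2k-1,2k)$ at the start, followed by a shifted copy of $m_0$ on the vertices $\{2k+1,\dots,N\}$. A direct reduction shows $\core(C(m_0,k)) = m_0$. The base case $k=0$ is immediate, since then $m = \core(m) = m_0 = C(m_0,0)$. For the inductive step, the central tool is a leftward-movement claim: whenever $m$ has a short chord at $(i,i+1)$ with $i \ge 2$, a single elementary Knuth-like transformation yields $m'$ with short chord at $(i-1,i)$. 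If $i-1$ is unmatched in $m$, then $\{(i-1),(i,i+1)\} \subseteq m$ and transformation~1 produces $\{(i-1,i),(i+1)\}$; otherwise $i-1$ is matched to some $b \notin \{i,i+1\}$, and transformation~2 applied to $\{(i-1,b),(i,i+1)\}$ produces $\{(i-1,i),(i+1,b)\}$, valid regardless of whether $b<i-1$ or $b>i+1$ by Definition~\ref{def:knuth like equivalence}.

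Iterating this leftward movement, and using Lemma~\ref{lem:properties of core--chord unstable iff all midpoints unstable} to guarantee that a short chord exists whenever $k \ge 1$, I will transform any $m$ with $\core(m) = m_0$ into a Knuth-like equivalent $m^*$ satisfying $(1,2) \in m^*$. Removing this short chord and relabeling yields $m^{**} \in \M_{N-2}$ whose core is still $m_0$ (since the reduction of $m^*$ may begin by removing $(1,2)$) and which has $k-1$ unstable chord pairs. By induction $m^{**}$ is Knuth-like equivalent to $C(m_0, k-1)$. Any elementary transformation on $m^{**}$ involves vertices in $[N-2]$; shifting all indices up by $2$ lifts it to an elementary transformation on $m^*$ involving only vertices in $[3,N]$, which leaves the chord $(1,2)$ untouched. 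Lifting the entire sequence proves $m^* \sim C(m_0, k)$, so every matching with core $m_0$ is Knuth-like equivalent to the same canonical representative, and hence any two such matchings are equivalent.

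The most delicate part will be the bookkeeping in the leftward-movement step: I need to confirm the case analysis on vertex $i-1$ is exhaustive — it is, since $i$ is already matched to $i+1$, so $i-1$ cannot be matched to either of them — and that the modification to the chord incident to $i-1$ performed while moving the short chord does not disrupt the induction. The latter is automatic from the forward direction: since every elementary transformation preserves the core, $\core(m^*) = \core(m) = m_0$ is guaranteed throughout.
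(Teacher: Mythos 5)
Your argument is correct and is essentially the paper's proof in different packaging: the forward direction is the same direct check that each elementary transformation preserves the core, and your backward direction (slide a short chord leftward one elementary move at a time, peel it off at $(1,2)$, induct on the number of unstable chords, and lift the remaining transformations past the removed chord) reproduces the paper's three lemmas about the insertion operator $\insrt_i$ --- sliding (Lemma~\ref{lem:insert i equivalent insert j}), compatibility with equivalence (Lemma~\ref{lem:insert same chord to equivalent remains equivalent}), and generation of every matching from its core by insertions (Lemma~\ref{lem:matching is generated by inserts to its core}) --- read in the removal rather than the insertion direction. The canonical representative $C(m_0,k)$ is only a presentational device; the substance of the two proofs is the same.
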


We will prove Theorem~\ref{thm:knuth like equivalent iff same core} in two steps:
\proofstep{Step 1: If two matchings are equivalent then they have the same core}
\begin{lemma}
    Let $m_1, m_2 \in \M_N$ be two Knuth equivalent matchings. Then $\core(m_1) = \core(m_2)$.
\end{lemma}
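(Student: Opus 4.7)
The plan is to reduce the assertion to the case of a single elementary Knuth-like transformation, using that Knuth-like equivalence is generated by such transformations: by induction on the length of a sequence of elementary transformations connecting $m_1$ and $m_2$, it suffices to show that if $m_2$ is obtained from $m_1$ by one elementary transformation, then $\core(m_1) = \core(m_2)$.

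The key tool I would invoke is Proposition~\ref{prop:core well defined}, which guarantees that the reduction process produces a well-defined core independent of the order of short-chord removals. This lets me choose, in each case, to start the reductions of $m_1$ and $m_2$ by removing the specific short chord featured in the elementary transformation, after which the two intermediate matchings can be compared directly and shown to coincide after re-labelling.

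For a Type~1 transformation, $m_1$ contains the blocks $(i,i+1)$ and $(i+2)$ while $m_2$ contains $(i)$ and $(i+1,i+2)$, and all other blocks are identical. Removing the short chord first in each reduction leaves a matching on $[N]\setminus\{i,i+1\}$ (for $m_1$) with an isolated vertex at $i+2$, and a matching on $[N]\setminus\{i+1,i+2\}$ (for $m_2$) with an isolated vertex at $i$. Since the re-indexing preserves relative order, in both cases the isolated vertex becomes the $i$-th vertex of the re-indexed matching, and every remaining block is re-labelled by the same shift. Hence the two re-indexed intermediate matchings coincide, and the subsequent reduction steps yield the same core.

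For a Type~2 transformation, $m_1$ contains $(i,i+1)$ and $(i+2,j)$ while $m_2$ contains $(i+1,i+2)$ and $(i,j)$, agreeing elsewhere. Removing the short chord first in each reduction leaves in $m_1$ the chord $(i+2,j)$ on $[N]\setminus\{i,i+1\}$, and in $m_2$ the chord $(i,j)$ on $[N]\setminus\{i+1,i+2\}$. Under relative-order re-indexing, $i+2$ and $i$ both become the $i$-th vertex of their respective intermediate matchings; tracking $j$ in both sub-cases $j<i$ (where $j$ keeps its label) and $j>i+2$ (where $j$ shifts to $j-2$) shows that in either case $j$ ends up with the same re-indexed label in both matchings. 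Hence the two re-indexed intermediate matchings are identical, and their cores agree. The only place where care is needed is this bookkeeping for the position of $j$ in the Type~2 case; it is a routine check rather than a genuine obstacle, and it closes the induction.
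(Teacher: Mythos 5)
Your proposal is correct and follows essentially the same route as the paper: reduce by induction to a single elementary Knuth-like transformation and observe that the two matchings then have the same core. The paper states this last step in one sentence (the matchings ``differ only in the relative position of a certain short chord''), whereas you justify it properly by invoking the well-definedness of the reduction process to remove the featured short chord first and then checking the re-indexing bookkeeping in both transformation types --- a welcome expansion of a step the paper leaves implicit.
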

\begin{proof}
    Since $m_1$ and $m_2$ are equivalent, we deduce that $m_2$ can be obtained from $m_1$ by a sequence of elementary Knuth transformations. Notably, given a matching $m \in \M_N$ and a matching $\varphi(m)$ obtained from $m$ by applying an elementary Knuth transformation, the matchings $m$ and $\varphi(m)$ differ only in the relative position of a certain short chord, and therefore $\core(\varphi(m)) = \core(m)$. A direct induction shows that $\core(m_1) = \core(m_2)$.
\end{proof}

\proofstep{Step 2: If two matchings have the same core then they are equivalent}
\begin{lemma}
\label{lem:if same core then equivalent}
    Let $m_1, m_2 \in \M_N$ be matchings, and assume that $\core(m_1) = \core(m_2)$. Then $m_1$ and $m_2$ are Knuth equivalent.
\end{lemma}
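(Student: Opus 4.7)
My plan is to induct on $N$, with the crucial step being a ``left-shift'' move that transports any short chord one position to the left via a single elementary Knuth-like transformation. Concretely, suppose $m \in \M_N$ has a short chord $(i, i+1)$ with $i > 1$. If vertex $i - 1$ is unmatched in $m$, then the chords $(i-1),\ (i, i+1)$ may be exchanged for $(i-1, i),\ (i+1)$, which is precisely the reverse of a Type 1 transformation in Definition~\ref{def:knuth like equivalence}. If instead $(i-1, j) \in m$ for some $j \ne i, i+1$, then we exchange $(i-1, j),\ (i, i+1)$ for $(i-1, i),\ (i+1, j)$, which is the reverse of a Type 2 transformation (regardless of whether $j > i+1$ or $j < i-1$). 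In either case the resulting matching has $(i-1, i)$ as a short chord, so by iteration we may assume the short chord under consideration sits at position $(1, 2)$.

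Given this shift sub-lemma, I proceed by induction on $N$. Assume the statement for all smaller values, and let $m_1, m_2 \in \M_N$ with $\core(m_1) = \core(m_2) = c$. If neither $m_i$ has a short chord, then $\core(m_i) = m_i$ as matchings on $[N]$, so $m_1 = m_2$ and there is nothing to prove. Otherwise, both $m_1$ and $m_2$ contain a short chord, since the core $c$ has fewer than $N$ vertices. By the sub-lemma, each $m_i$ is Knuth-like equivalent to some $m_i'$ in which $(1, 2)$ is a short chord. Deleting this short chord and relabeling the remaining vertices $\{3, \ldots, N\}$ as $[N-2]$ yields matchings $m_1'', m_2'' \in \M_{N-2}$. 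Since removing $(1, 2)$ is the first step of a valid reduction procedure for $m_i'$, Proposition~\ref{prop:core well defined} gives $\core(m_i'') = \core(m_i') = \core(m_i) = c$, and hence $\core(m_1'') = \core(m_2'')$.

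By the inductive hypothesis, $m_1''$ and $m_2''$ are Knuth-like equivalent in $\M_{N-2}$. Each elementary Knuth-like transformation in $\M_{N-2}$ involves only a local window of three or four indices, and by adding $2$ to every index it lifts to an elementary Knuth-like transformation on $m_i'$ in $\M_N$ that does not touch the chord $(1, 2)$. Concatenating these lifted moves shows $m_1' \sim m_2'$, and combining with $m_i \sim m_i'$ gives $m_1 \sim m_2$, completing the induction. The main obstacle is the verification of the left-shift sub-lemma, which requires a direct case analysis from Definition~\ref{def:knuth like equivalence} to ensure that the needed reverse transformation is always available regardless of how vertex $i-1$ is matched; once that is established, the rest of the argument is a routine reduction.
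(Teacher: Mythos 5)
Your proof is correct, and it organizes the argument differently from the paper. The paper works bottom-up: it defines an insertion operator $\insrt_i$ that adds a short chord at position $i$, proves that $\insrt_i(m) \sim \insrt_j(m)$ and that insertion preserves Knuth-like equivalence, shows every matching is $\insrt_{i_1}\cdots\insrt_{i_k}(\core(m))$, and concludes that any two insertion sequences over the same core yield equivalent matchings. You work top-down: a left-shift sub-lemma normalizes a short chord to position $(1,2)$, after which you strip it off and induct on $N$, lifting the equivalence in $\M_{N-2}$ back to $\M_N$ by an index shift. The essential combinatorial content is the same in both cases --- a short chord slides past an unmatched vertex or a chord endpoint via a single elementary move (your left-shift is exactly the paper's observation that $\insrt_{i}(m)\sim\insrt_{i+1}(m)$, read in reverse) --- but your single induction on $N$ replaces the paper's three nested inductions (on the distance $j-i$, on the length of the transformation sequence, and on the number of inserted chords), at the cost of the normalization and lifting steps. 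Two small points of care: the equality $\core(m_i')=\core(m_i)$ is not a consequence of Proposition~\ref{prop:core well defined} but of the (independently proven, and easy) fact that an elementary Knuth-like transformation preserves the core, i.e.\ Step~1 of Theorem~\ref{thm:knuth like equivalent iff same core}; and your case analysis in the left-shift should note explicitly that $i-1$ cannot be matched to $i$ or $i+1$ (since those two are matched to each other), so the two cases you treat are exhaustive. Neither point is a gap.
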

Before proving Lemma~\ref{lem:if same core then equivalent}, we introduce the notion of inserting a short chord into a matching:
\begin{definition}
\label{def:insert short chord to matching}
    Let $m \in \M_N$ be a matching, and let $1 \le i \le N+1$ be an index. Denote by $\insrt_i(m) \in \M_{N+2}$ the matching obtained by \emph{inserting a short chord} that matches the vertices $i$ and $i+1$, while pushing every vertex $j \ge i$ to position $j+2$. Formally, denote by $f_i:[N] \to [N+2]\setminus\{i,i+1\}$ the function that is described as follows:
    \[
        f_i(j) = \begin{cases}
			j & \text{if $j < i$,}\\
            j+2 & \text{if $j \ge i$.}
		 \end{cases}
    \]
    Then the matching $\insrt_i(m)$ consists of the chords $(f_i(j_1),f_i(j_2))$ for all $(j_1,j_2) \in m$ together with $(i,i+1)$, and consists of the unmatched vertices $(f_i(j))$ for all $(j) \in m$.
\end{definition}

For example, if $m = \{(1,3),\; (2,6),\; (4,5)\}$, then $\insrt_3(m) = \{(1,5),\; (2,8),\; (3,4),\; (6,7)\}$.

The insertion function is closely related to Knuth equivalence, as demonstrated by the following lemmas:

\begin{lemma}
\label{lem:insert i equivalent insert j}
    Let $m \in \M_N$ be a matching and let $1 \le i,j \le N+1$ be indices. Then $\insrt_i(m)$ is Knuth equivalent to $\insrt_j(m)$.
\end{lemma}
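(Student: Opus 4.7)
The plan is to reduce the general claim to adjacent positions: once we show $\insrt_i(m) \sim \insrt_{i+1}(m)$ for every $1 \le i \le N$, transitivity of the Knuth-like equivalence yields $\insrt_i(m) \sim \insrt_j(m)$ for arbitrary $1 \le i, j \le N+1$. For this one-step reduction, I would fix $m \in \M_N$ and $1 \le i \le N$ and compare $\insrt_i(m)$ with $\insrt_{i+1}(m)$ position by position.

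The key observation is the shift identity $f_i(k) = f_{i+1}(k)$ for every $k \in [N] \setminus \{i\}$, which is immediate from Definition~\ref{def:insert short chord to matching} (if $k < i$ both sides equal $k$; if $k > i$ both sides equal $k+2$). Combined with $f_i(i) = i+2$ and $f_{i+1}(i) = i$, this implies that the two matchings agree on every block of $m$ not containing the vertex $i$. The only local difference appears around the inserted short chord and the image of vertex $i$, which sits at position $i+2$ in $\insrt_i(m)$ and at position $i$ in $\insrt_{i+1}(m)$.

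A short case split then identifies the transition as a single elementary Knuth-like move from Definition~\ref{def:knuth like equivalence}. If $(i) \in m$, then in $\insrt_i(m)$ we see $(i,i+1)$ together with $(i+2)$, and in $\insrt_{i+1}(m)$ we see $(i)$ together with $(i+1,i+2)$, exactly the first type of transformation. If instead $(i, j_0) \in m$ for some $j_0 \ne i$, then setting $j := f_i(j_0) = f_{i+1}(j_0)$, in $\insrt_i(m)$ we see $(i,i+1)$ together with $(i+2,j)$, and in $\insrt_{i+1}(m)$ we see $(i,j)$ together with $(i+1,i+2)$, exactly the second type. That $j \notin \{i,i+1,i+2\}$ is forced by injectivity of $f_i$ and the fact that its image avoids $\{i,i+1\}$.

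I do not anticipate any real obstacle; once the shift identity is in hand, everything is bookkeeping. The one corner to verify is $j_0 = i+1$ (when $m$ already carries a short chord at the splicing site), but then $j = i+3 \notin \{i,i+1,i+2\}$ and the second transformation still applies cleanly.
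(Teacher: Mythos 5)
Your proposal is correct and follows essentially the same route as the paper: reduce to the adjacent case $\insrt_i(m) \sim \insrt_{i+1}(m)$ and conclude by transitivity (the paper phrases this as induction on $j-i$). The only difference is that the paper dismisses the adjacent step as immediate from Definition~\ref{def:insert short chord to matching}, whereas you carry out the verification explicitly via the identity $f_i(k)=f_{i+1}(k)$ for $k\ne i$ and the two-case analysis; that check, including the $j_0=i+1$ corner, is accurate.
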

\begin{proof}
    We may assume without loss of generality that $i \le j$, and prove the statement by induction on $j-i$. The statement is obvious for $i=j$.

    Assume that $i < j$. By Definition~\ref{def:insert short chord to matching}, the matching $\insrt_{i+1}(m)$ is Knuth equivalent to $\insrt_i(m)$. By the induction hypothesis, $\insrt_{i+1}(m)$ is Knuth equivalent to $\insrt_j(m)$ as well. Therefore, $\insrt_i(m)$ is Knuth equivalent to $\insrt_j(m)$.
\end{proof}

\begin{lemma}
\label{lem:insert same chord to equivalent remains equivalent}
    Let $m_1,m_2 \in \M_N$ be Knuth equivalent matchings, and let $1 \le i \le N+1$ be an index. Then $\insrt_i(m_1)$ and $\insrt_i(m_2)$ are Knuth equivalent.
\end{lemma}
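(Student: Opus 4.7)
The plan is to proceed by induction on the length $r$ of a sequence of elementary Knuth-like transformations witnessing the equivalence of $m_1$ and $m_2$. The base case $r=0$ is trivial, and it suffices to treat $r=1$: assume $m_2$ is obtained from $m_1$ by a single elementary Knuth-like transformation, involving three consecutive positions $k, k+1, k+2$ and (for a transformation of the second type in Definition~\ref{def:knuth like equivalence}) a fourth endpoint $\ell \notin \{k, k+1, k+2\}$.

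The argument will split according to the location of $i$ relative to the transformation region. First I would handle the case where $i \le k$ or $i > k+2$. In these subcases the insertion does not split the transformation region: each of $k, k+1, k+2$ is shifted by the same amount under the function $f_i$ of Definition~\ref{def:insert short chord to matching} (either all by $2$ or all by $0$). Writing $k' = f_i(k)$, a direct verification shows that the chords of $\insrt_i(m_1)$ corresponding to those involved in the original transformation sit at positions $k', k'+1, k'+2$ (and, for the second type, have fourth endpoint $f_i(\ell)$), so applying the same elementary Knuth-like transformation at position $k'$ produces $\insrt_i(m_2)$.

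For the remaining case $k < i \le k+2$, the insertion splits the transformation region and the move cannot be applied directly. Here the plan is to appeal to Lemma~\ref{lem:insert i equivalent insert j}: since $k \ge 1$, the position $i' := 1$ satisfies $i' \le k$, so the first case shows that $\insrt_{i'}(m_1)$ is Knuth-like equivalent to $\insrt_{i'}(m_2)$. Combined with the equivalence between $\insrt_i(m_1)$ and $\insrt_{i'}(m_1)$, and between $\insrt_{i'}(m_2)$ and $\insrt_i(m_2)$, supplied by Lemma~\ref{lem:insert i equivalent insert j}, chaining yields the desired conclusion.

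The main obstacle will be the bookkeeping in the first case for transformations of the second type, where the fourth endpoint $\ell$ may lie on either side of $k+2$ and may be shifted differently from $k, k+1, k+2$ by $f_i$; one needs to verify in each sub-situation that the image of the transformation region in $\insrt_i(m_1)$ is still a valid elementary Knuth-like configuration, invoking the ``regardless of whether $i+2 < j$ or $i+2 > j$'' clause of Definition~\ref{def:knuth like equivalence} with the appropriate orientation.
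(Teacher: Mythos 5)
Your proposal is correct and follows essentially the same strategy as the paper: induct on the length of the transformation sequence, observe that an insertion whose position lies outside the three-position transformation region commutes with the elementary move (up to the uniform shift by $f_i$), and invoke Lemma~\ref{lem:insert i equivalent insert j} to relocate the insertion point when it would otherwise split the region. The only cosmetic difference is that the paper normalizes to $i=N+1$ at the outset, whereas you normalize only in the problematic case $k<i\le k+2$.
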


\begin{proof}
    By Lemma~\ref{lem:insert i equivalent insert j}, we may assume that $i = N+1$. Assume that $m_2 = \varphi_1 \cdots \varphi_\ell (m_1)$ for some elementary Knuth transformations $\varphi_1, \dots, \varphi_\ell$. We prove the statement by induction on $\ell$. The statement is obvious for $\ell = 0$.

    Assume that $\ell > 0$. By the induction hypothesis, we may assume that $\insrt_{N+1}(m_2)$ is equivalent to $\insrt_{N+1}(\varphi_\ell(m_1))$. Therefore, it suffices to show that $\insrt_{N+1}(m_1)$ is equivalent to $\insrt_{N+1}(\varphi_\ell(m_1))$. Assume that $(j,j+1),\; (j+2,j') \in m_1$ for some $1 \le j,j' \le N$, and that $(j,j'),\; (j+1,j+2) \in \varphi_\ell(m_1)$, as other types of elementary Knuth transformations are handled similarly. Notice that $\insrt_{N+1}(m_1)$ and $\insrt_{N+1}(\varphi_\ell(m_1))$ have all but four chords in common. Specifically, $\insrt_{N+1}(m_1)$ contains the chords $(j,j+1)$ and $(j+2,j')$, while $\insrt_{N+1}(\varphi_\ell(m_1))$ contains the chords $(j,j')$ and $(j+1,j+2)$. Therefore, there exists an elementary Knuth transformation $\varphi$, such that $\varphi(\insrt_{N+1}(m_1)) = \insrt_{N+1}(\varphi_\ell(m_1))$. Thus, the matchings $\insrt_{N+1}(m_1)$ and $\insrt_{N+1}(\varphi_\ell(m_1))$ are equivalent, as required.
\end{proof}

We can combine Lemma~\ref{lem:insert i equivalent insert j} with Lemma~\ref{lem:insert same chord to equivalent remains equivalent} to obtain the following:
\begin{lemma}
\label{lem:insert many chords to matching is equivalent}
    Let $m \in \M_n$ be a matching, and let $i_1,\dots, i_k$ and $j_1,\dots, j_k$ be two sequences. Then the matchings $\insrt_{i_1}\cdots\insrt_{i_k}(m)$ and $\insrt_{j_1}\cdots\insrt_{j_k}(m)$ are Knuth equivalent.
\end{lemma}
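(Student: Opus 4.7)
The plan is to prove this by induction on $k$, using the two lemmas just established as the inductive tools. The base case $k=0$ is trivial (both sides equal $m$), and the case $k=1$ is exactly Lemma~\ref{lem:insert i equivalent insert j}.

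For the inductive step, assume the statement holds for $k-1$ and consider two arbitrary sequences $i_1,\dots,i_k$ and $j_1,\dots,j_k$. The strategy is to change the sequence one insertion at a time, starting from the innermost one. First, by the induction hypothesis applied to the matching $m$ and the sequences $i_2,\dots,i_k$ and $j_2,\dots,j_k$ of length $k-1$, we get
\[
    \insrt_{i_2}\cdots\insrt_{i_k}(m) \text{ is Knuth-like equivalent to } \insrt_{j_2}\cdots\insrt_{j_k}(m).
\]
Applying Lemma~\ref{lem:insert same chord to equivalent remains equivalent} with the same position $i_1$ to both sides preserves this equivalence, yielding
\[
    \insrt_{i_1}\insrt_{i_2}\cdots\insrt_{i_k}(m) \text{ is Knuth-like equivalent to } \insrt_{i_1}\insrt_{j_2}\cdots\insrt_{j_k}(m).
\]
Now apply Lemma~\ref{lem:insert i equivalent insert j} to the matching $\insrt_{j_2}\cdots\insrt_{j_k}(m)$ to conclude that $\insrt_{i_1}\insrt_{j_2}\cdots\insrt_{j_k}(m)$ and $\insrt_{j_1}\insrt_{j_2}\cdots\insrt_{j_k}(m)$ are Knuth-like equivalent. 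Combining the two equivalences by transitivity finishes the inductive step.

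I do not foresee a real obstacle: the two preceding lemmas exactly decouple the two things that can differ between the two sequences, namely the outermost insertion position and everything inside. The only minor bookkeeping point is to ensure that all the insertion indices remain within the legal range $[1,N']$ as $N'$ grows by $2$ with each insertion, but this is automatic from the hypothesis that both original sequences are well-defined.
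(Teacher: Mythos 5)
Your proof is correct and follows essentially the same route as the paper: induction on $k$, using Lemma~\ref{lem:insert same chord to equivalent remains equivalent} to change the inner sequence and Lemma~\ref{lem:insert i equivalent insert j} to change the outermost insertion position, then transitivity. No issues.
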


\begin{proof}
    We prove the statement by induction on $k$. If $k=0$ then the statement is obvious.

    Assume that $k > 0$, and denote $m_i = \insrt_{i_2}\cdots\insrt_{i_k}(m)$ and $m_j = \insrt_{j_2}\cdots\insrt_{j_k}(m)$. We aim to prove that the matchings $\insrt_{i_1}(m_i)$ and $\insrt_{j_1}(m_j)$ are Knuth equivalent. By the induction hypothesis, $m_i$ and $m_j$ are equivalent. Therefore, by Lemma~\ref{lem:insert same chord to equivalent remains equivalent}, the matchings $\insrt_{i_1}(m_i)$ and $\insrt_{i_1}(m_j)$ are equivalent too. In addition, by Lemma~\ref{lem:insert i equivalent insert j}, the matchings $\insrt_{i_1}(m_j)$ and $\insrt_{j_1}(m_j)$ are equivalent. Therefore, the matchings $\insrt_{i_1}(m_i)$ and $\insrt_{j_1}(m_j)$ are equivalent.
\end{proof}

\begin{lemma}
\label{lem:matching is generated by inserts to its core}
    Let $m \in \M_N$ be a matching with $k$ unstable chords. Then there exist indices $i_1, \dots, i_k$ such that
    \[
        m = \insrt_{i_1}\cdots\insrt_{i_k}(\core(m)).
    \]
\end{lemma}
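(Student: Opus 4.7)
The plan is to induct on $k$, the number of unstable chords of $m$. The base case $k=0$ is immediate: if $m$ has no unstable chords, then the reduction process terminates without removing anything, so $\core(m) = m$ (no re-indexing is needed either), and the empty sequence of insertions suffices.

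For the inductive step, suppose $k \ge 1$. Since $m$ has at least one unstable chord, any valid reduction process of $m$ must remove at least one chord, and by Proposition~\ref{prop:core well defined} the set of unstable chords is independent of the order of removal. In particular, $m$ itself must contain a short chord, say $(i, i+1)$, since otherwise no reduction step could ever begin. Let $m' \in \M_{N-2}$ denote the matching obtained from $m$ by deleting the chord $(i, i+1)$ and re-indexing: vertices $j < i$ keep their labels, while vertices $j \ge i+2$ are relabeled to $j-2$. A direct comparison with Definition~\ref{def:insert short chord to matching} shows that $m = \insrt_i(m')$.

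Next, observe that by Proposition~\ref{prop:core well defined} we may begin the reduction of $m$ by first removing $(i,i+1)$; the continuation of this reduction is, after the above re-indexing, exactly a reduction process of $m'$. Hence $\core(m') = \core(m)$, and the unstable chords of $m'$ correspond bijectively to the unstable chords of $m$ other than $(i,i+1)$, so $m'$ has exactly $k-1$ unstable chords. Applying the induction hypothesis to $m'$ yields indices $i_2, \dots, i_k$ with $m' = \insrt_{i_2}\cdots\insrt_{i_k}(\core(m'))$, and substituting gives
\[
    m = \insrt_i(m') = \insrt_i \insrt_{i_2} \cdots \insrt_{i_k}(\core(m)),
\]
so we may take $i_1 = i$.

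The only real subtlety is the bookkeeping that identifies the tail of the reduction of $m$ with a genuine reduction of $m'$; this amounts to checking that a chord $(j_1, j_2) \in m$ with $j_1, j_2 \notin \{i, i+1\}$ becomes short at some intermediate stage if and only if its image in $m'$ becomes short at the corresponding stage, which is a straightforward consequence of the fact that re-indexing preserves the relative order of vertices and the property of being a short chord. Beyond that, the argument is routine induction.
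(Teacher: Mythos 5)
Your proof is correct and follows essentially the same route as the paper: induct on $k$, observe that a matching with an unstable chord must contain a short chord $(i,i+1)$, peel it off to write $m = \insrt_i(m')$ with $\core(m') = \core(m)$, and apply the induction hypothesis. The extra bookkeeping you spell out (that the tail of a reduction of $m$ is a reduction of $m'$) is the content the paper leaves implicit in its appeal to Proposition~\ref{prop:core well defined}, so nothing is missing.
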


\begin{proof}
    We prove the statement by induction on $k$. If $k=0$ then $\core(m) = m$ and the statement is obvious.

    Assume that a matching $m \in \M_N$ has $k > 0$ unstable chords. Therefore, $m$ has at least one short chord, denoted $(i_1,i_1+1) \in m$. Thus, there exists a matching $m_1 \in \M_{N-2}$ such that $m = \insrt_{i_1}(m_1)$. Notice that $\core(m) = \core(m_1)$, so $m_1$ has $k-1$ unstable chords. Thus, by the induction hypothesis, there exist indices $i_2,\dots,i_k$ such that $m_1 = \insrt_{i_2}\cdots\insrt_{i_k}(\core(m))$. Therefore, $m = \insrt_{i_1}\cdots\insrt_{i_k}(\core(m))$.
\end{proof}

Now we are ready to prove Lemma~\ref{lem:if same core then equivalent}.
\begin{proof}[Proof of Lemma~\ref{lem:if same core then equivalent}]
    Let $m_1, m_2 \in \M_N$ be matchings with $m_0 := \core(m_1) = \core(m_2)$. Thus, $m_1$ and $m_2$ have the same number of unstable chords, denoted $k$. By Lemma~\ref{lem:matching is generated by inserts to its core}, we can write $m_1 = \insrt_{i_1}\cdots\insrt_{i_k}(m_0)$ and $m_2 = \insrt_{j_1}\cdots\insrt_{j_k}(m_0)$ for some $i_1,\dots,i_k$ and $j_1,\dots,j_k$. The statement now follows directly from Lemma~\ref{lem:insert many chords to matching is equivalent}.
\end{proof}

Theorem~\ref{thm:knuth like equivalent iff same core} implies the following result:
\begin{corollary}
\label{cor:closed under knuth like is schur positive}
    If a set $\M \subseteq \M_N$ is closed under Knuth equivalence then it is Schur-positive with respect to $\Short$. Moreover, if $\M$ is a Knuth equivalence class then its generating function is
    \[
        \Q(\M) = s_{N-k,k},
    \]
    where $k$ is the number of unstable chords of some arbitrary matching $m \in \M$.
\end{corollary}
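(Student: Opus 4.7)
The plan is to combine Theorem~\ref{thm:knuth like equivalent iff same core} with the bijection $F$ of Section~\ref{subsec:bijective proof matchings schur positive}. Theorem~\ref{thm:knuth like equivalent iff same core} says that the Knuth-like equivalence classes of $\M_N$ are exactly the fibers of the core map $m \mapsto \core(m)$. Hence any subset $\M \subseteq \M_N$ closed under Knuth-like equivalence decomposes as the disjoint union of its intersections with these fibers, each fiber being the full preimage of some short-chord-free matching.

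I would first establish the ``moreover'' assertion, since the general statement follows from it by additivity. Fix a Knuth-like equivalence class $\mathcal{C} \subseteq \M_N$, let $m_0 := \core(m)$ for some (equivalently, every) $m \in \mathcal{C}$, and write $N - 2k$ for the number of vertices of $m_0$, so that $k$ is the number of unstable chords of any matching in $\mathcal{C}$. By Theorem~\ref{thm:knuth like equivalent iff same core}, the bijection $F$ of Section~\ref{subsec:bijective proof matchings schur positive} restricts to a bijection $\mathcal{C} \to \{m_0\} \times \Syt(N-k,k)$, and by Proposition~\ref{prop:F statistic preserving} this restriction satisfies $\Short(m) = \Des(T(m))$. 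Therefore
\[
    \sum_{m \in \mathcal{C}} \boldsymbol{t}^{\Short(m)} \;=\; \sum_{T \in \Syt(N-k,k)} \boldsymbol{t}^{\Des(T)},
\]
and Theorem~\ref{thm:schur functions young tableaux} together with Theorem~\ref{thm:criterion schur positive young} yield $\Q(\mathcal{C}) = s_{N-k,k}$.

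For a general set $\M$ closed under Knuth-like equivalence, write $\M = \bigsqcup_\alpha \mathcal{C}_\alpha$ as a disjoint union of equivalence classes, and let $k_\alpha$ be the associated parameter for $\mathcal{C}_\alpha$ as above. Summing the identity just obtained yields
\[
    \Q(\M) \;=\; \sum_\alpha s_{N-k_\alpha,k_\alpha},
\]
a nonnegative integer combination of Schur functions, so $\M$ is Schur-positive with respect to $\Short$.

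No new obstacle arises here: all the substantive content has already been packaged into Theorem~\ref{thm:knuth like equivalent iff same core}, Proposition~\ref{prop:F statistic preserving}, and the bijectivity of $F$ (Lemmas~\ref{lem:F_tilde(F)=id} and~\ref{lem:F(F_tilde)=id}). The only thing to verify in the proof is that the restriction of $F$ to a fixed fiber of $\core$ really hits every pair with first coordinate $m_0$, which is immediate from the fact that $F$ is a bijection between $\M_{N,f}$ and $\bigcup_k \M_{N-2k,f}(\Short=\emptyset) \times \Syt(N-k,k)$.
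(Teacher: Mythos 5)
Your proposal is correct and follows essentially the same route as the paper: reduce to a single Knuth-like equivalence class via disjoint-union additivity, identify the class as a fiber of $\core$ using Theorem~\ref{thm:knuth like equivalent iff same core}, and observe that the restriction of $F$ is a statistic-preserving bijection onto $\{m_0\}\times\Syt(N-k,k)$, so Theorem~\ref{thm:criterion schur positive young} gives $\Q(\mathcal{C})=s_{N-k,k}$. The only cosmetic difference is that you spell out the surjectivity of the restricted map and the final summation explicitly, which the paper leaves implicit.
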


\begin{proof}
    Clearly, a disjoint union of Schur-positive sets is Schur-positive. Therefore, it suffices to establish the statement for equivalence classes. Let $\M \subseteq \M_N$ be an equivalence class. By Theorem~\ref{thm:knuth like equivalent iff same core} there exist $0 \le k \le \frac{N}{2}$, $f \ge 0$ and $m_0 \in \M_{N-2k,f}(\Short=\emptyset)$, such that $\M = \{m \in \M_{N,f} \mid \core(m) = m_0\}$. Recall the transformation $F:m \mapsto (\core(m),T(m))$ discussed in Section~\ref{subsec:bijective proof matchings schur positive}, and consider its restriction to $\M$. We obtain that the restricted transformation
    \[
        \restr{F}{\M}:\M \to \{m_0\} \times \Syt(N-k,k)
    \]
    is a statistic-preserving bijection. Thus, applying Theorem~\ref{thm:criterion schur positive young} completes the proof.
\end{proof}

\subsection{Other constructions of Schur-positive sets}

We may apply Corollary~\ref{cor:closed under knuth like is schur positive} to obtain other Schur-positive sets of matchings.
For example, we can filter $\M_N$ by the isomorphism class of the intersection graph:
\begin{definition}
    Let $m$ be a matching. Its \emph{intersection graph}, denoted $G(m)$, is defined to be the undirected simple graph with the chords of $m$ as its vertices, and with an edge between two vertices if the associated chords of $m$ intersect.
\end{definition}

It can be easily seen that applying an elementary Knuth transformation on a matching preserves its intersection graph up to graph-isomorphism. Therefore, by Corollary~\ref{cor:closed under knuth like is schur positive}:

\begin{corollary}
\label{cor:specific intersection graph schur positive}
    For every $N$ and $f$, the set of matchings $m \in \M_{N,f}$ with a fixed intersection graph up to graph-isomorphism is Schur-positive.
\end{corollary}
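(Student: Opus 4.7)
The plan is to invoke Corollary~\ref{cor:closed under knuth like is schur positive}: since a disjoint union of Schur-positive sets is Schur-positive, it suffices to show that for every fixed isomorphism type of a simple graph $G_0$, the set $\{m \in \M_{N,f} \mid G(m) \cong G_0\}$ is closed under the Knuth-like equivalence. Equivalently, I need to check that each of the two elementary Knuth-like transformations preserves the intersection graph up to isomorphism.

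The first step is the key observation that a short chord $(i,i+1)$ is an isolated vertex in the intersection graph $G(m)$: intersection requires four vertices $i_1<i_2<i_3<i_4$ with chords $(i_1,i_3),(i_2,i_4)$, but no $i_2$ can lie strictly between $i$ and $i+1$. Hence the first elementary transformation, which replaces $(i,i+1),(i+2)$ by $(i),(i+1,i+2)$, merely exchanges one isolated-vertex short chord for another, leaving the induced intersection graph on the remaining chords literally identical; in particular $G(m)$ is unchanged up to relabeling the isolated vertex.

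For the second elementary transformation $(i,i+1),(i+2,j) \leftrightarrow (i,j),(i+1,i+2)$, the short chord remains isolated as before, so only the non-short chord changes, from $(i+2,j)$ to $(i,j)$ (or the reverse). The plan is to verify, by a short case analysis on whether $j<i$ or $j>i+2$, that for every other chord $(a,b) \in m$ the condition for $(a,b)$ to intersect $(i+2,j)$ coincides with the condition for $(a,b)$ to intersect $(i,j)$. This reduces to checking that $a \in \{i+3,\dots,j-1\}$ iff $a \in \{i+1,\dots,j-1\}$ (when $j>i+2$), and analogously in the other case; both equivalences are immediate since $a,b \notin \{i,i+1,i+2,j\}$ by the disjointness of chords.

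Putting these two steps together, every elementary Knuth-like transformation induces a graph isomorphism between $G(m)$ and $G(\varphi(m))$ (sending unchanged chords to themselves and the one short chord to its image), so isomorphism type of $G$ is a Knuth-like invariant. Hence the set in question is a union of Knuth-like equivalence classes, and Corollary~\ref{cor:closed under knuth like is schur positive} finishes the argument. I do not foresee a real obstacle here; the only thing to be careful about is the case split $j<i$ vs.\ $j>i+2$ in the second transformation, which is otherwise entirely routine.
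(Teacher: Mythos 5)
Your proposal is correct and follows exactly the paper's route: the paper also deduces the corollary from Corollary~\ref{cor:closed under knuth like is schur positive} by observing that elementary Knuth-like transformations preserve the intersection graph up to isomorphism (a fact the paper states without proof as ``easily seen''). Your case analysis simply supplies the details the paper omits, and it checks out.
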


For example, fix $N$ and $f$. Then for every $k$, the set of $k$-crossing matchings in $\M_{N,f}$ (i.e.,\ matchings where $k$ is the maximal cardinality of a set of pairwise intersecting chords) is Schur-positive. Equivalently, this is the set of matchings whose intersection graph has a maximal clique with $k$ vertices.

As another example, given a matching $m \in \M_{N,f}$, for every $1 \le i \le N$ denote by $I_i(m)$ the number of chords of $m$ that intersect the chord that contains $i$ (if $i$ is an unmatched vertex define $I_i(m)=0$). From Corollary~\ref{cor:specific intersection graph schur positive} we obtain that for any fixed multiset $S$, the set of matchings $m \in \M_{N,f}$ such that $\{I_i(m) \mid i \in [N]\} = S$ (as multisets) is Schur-positive. For example, the following sets are Schur-positive:
\begin{itemize}
    \item The set of matchings $m \in \M_{N,f}$ with exactly $k$ pairs of intersecting chords, i.e.,\ such that $\frac{1}{2} \sum_i I_i(m) = k$.
    \item The set of matchings $m \in \M_{N,f}$ with exactly $k$ intersecting chords, i.e.,\ such that $\frac{1}{2} |\{i \mid I_i(m)>0 \}| = k$.
    \item The set of matchings $m \in \M_{N,f}$ with $k$ the maximal number of times that a chord intersects other chords, i.e.,\ with $\max_i I_i(m) = k$.
\end{itemize}

\subsection{Pattern avoidance in matchings}
\label{subsec:pattern avoiding matchings}

Sagan and Woo~\cite{problem_find_pattern}, motivated by Elizalde and Roichman~\cite{arc_pattern_avoiding}, posed the problem of determining which sets $\Pi$ of permutations satisfy the property that for all $n$, the set of permutations in $S_n$ that avoid every pattern in $\Pi$ is Schur-positive. This problem has been extensively studied since then~\cite{S3_patterns_list, BloomSagan, my_pattern_theorem}.

An analogous question may be asked about Schur-positivity of pattern-avoiding matchings as well. Extensive research has been conducted on pattern avoidance in perfect matchings by Simion and Schmidt~\cite{simion1985restricted}, Jel\'{\i}nek and Mansour~\cite{jelinek2010matchings}, Bloom and Elizalde~\cite{bloom2013pattern}, and others, leading to multiple definitions in the literature. Fang, Hamaker, and Troyka~\cite{fang2022pattern} explore some of these definitions and provide a comparison. Additionally, various conventions exist for generalizing pattern avoidance to non-perfect matchings \cite{mansour2013partial, fang2022pattern, mcgovern2019closures}. We adopt the definition from McGovern~\cite{mcgovern2019closures}, which directly generalizes the definition for perfect matchings from~\cite{jelinek2010matchings} and~\cite{cervetti2022enumeration}.

\begin{definition}
\label{def:pattern avoidance}
    Let $m_1 \in \M_{N_1}$ and $m_2 \in \M_{N_2}$ for some positive integers $N_1 \le N_2$. We say that $m_2$ \emph{contains the pattern} $m_1$, if there exist indices $1 \le i_1 < \dots < i_{N_1} \le N_2$ such that the following holds:
    \begin{itemize}
        \item For all $1 \le j < j' \le N_1$,
        \[
            (j,j') \in m_1 \Longleftrightarrow (i_j,i_{j'}) \in m_2.
        \]
        \item For all $1 \le j \le N_1$,
        \[
            (j) \in m_1 \Longleftrightarrow (i_j) \in m_2.
        \]
    \end{itemize}
    
    Otherwise, we say that $m_2$ \emph{avoids} $m_1$.

    For two sets of matchings $\M_1 \subseteq \M_{N_1}$ and $\M_2 \subseteq \M_{N_2}$, denote by $\M_2(\M_1)$ the set of matchings in $\M_2$ that avoid every matching in $\M_1$. In addition, denote $\M_2(m) := \M_2(\{m\})$ for a matching $m$.
\end{definition}

The following problem is analogous to the problem of Sagan and Woo~\cite{problem_find_pattern} regarding pattern-avoiding permutations:

\begin{problem}
\label{prob:schur positive pattern avoiding matchings}
    Determine which sets $\M \subseteq \M_N$ of matchings satisfy the property that for all $N',f'$, the pattern-avoiding set $\M_{N',f'}(\M)$ is Schur-positive with respect to $\Short$.
\end{problem}

While a complete solution of Problem~\ref{prob:schur positive pattern avoiding matchings} seems challenging, we are able to solve the problem in the case where $|\M| = 1$.

\begin{proposition}
\label{prop:characterize pattern avoiding schur positive singletons}
    Let $N$ be a nonnegative integer, and let $m \in \M_N$ be a matching. Then the pattern-avoiding set $\M_{N',f'}(m)$ is Schur-positive with respect to $\Short$ for all $N',f'$ if and only if one of the following holds:
    \begin{enumerate}
        \item $\Short(m) = \emptyset$, or
        \item $m=\{(1,2)\}$, the unique perfect matching on two vertices.
    \end{enumerate}
\end{proposition}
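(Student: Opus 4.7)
My plan is to prove the two implications separately, leveraging the Knuth-like machinery developed earlier in Section~\ref{sec:refinements}.

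For the ``if'' direction, the case $m = \{(1,2)\}$ is easy: $\M_{N',f'}(m)$ consists of matchings with no short chord, so every element contributes $F_{N',\emptyset} = s_{(N')}$, giving a non-negative multiple of $s_{(N')}$. For the case $\Short(m) = \emptyset$, I would establish the key claim that $m' \in \M_{N',f'}$ contains $m$ if and only if $\core(m')$ contains $m$. This claim makes $\M_{N',f'}(m)$ a union of Knuth-like equivalence classes (by Theorem~\ref{thm:knuth like equivalent iff same core}), so Schur-positivity follows from Corollary~\ref{cor:closed under knuth like is schur positive}. The ``$\Leftarrow$'' direction of the claim is immediate: stable vertices of $m'$ are in order-preserving bijection with the vertices of $\core(m')$, preserving all chord and unmatched incidences.

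The main obstacle will be the ``$\Rightarrow$'' direction of the core claim, and this is where the hypothesis $\Short(m)=\emptyset$ is crucial. Given an embedding $j_1 < \dots < j_N$ of $m$ into $m'$, I would argue that it only uses stable vertices of $m'$. Since unmatched vertices of $m'$ are always stable (they are never removed in the reduction process), the only danger is a chord $(k,k') \in m$ mapping to an unstable chord $(j_k,j_{k'}) \in m'$. Lemma~\ref{lem:properties of core--chord unstable iff all midpoints unstable} and Lemma~\ref{lem:properties of core--intersect stable} then force every vertex inside $(j_k,j_{k'})$ to be unstable and every chord of $m'$ with an endpoint there to be contained in $(j_k,j_{k'})$; since unmatched vertices are stable the region must be perfectly matched, and reducibility to the empty matching by short-chord removal forces it to be non-crossing. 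Pulling this back through the embedding, the restriction of $m$ to $\{k+1,\dots,k'-1\}$ is a non-crossing perfect matching (no unmatched vertex of $m$ could map into an unstable region). If $k'>k+1$ any such non-empty matching contains a short chord, and if $k'=k+1$ then $(k,k+1)\in m$ is itself short—either way, we contradict $\Short(m)=\emptyset$. Hence the embedding stays in the stable part and pushes down to $\core(m')$.

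For the ``only if'' direction, I would take $N' = N$ and $f' = f$, where $N$ and $f$ are the number of vertices and unmatched vertices of $m$. Any embedding of $m$ into a matching of the same size must be the identity, so $\M_{N,f}(m) = \M_{N,f}\setminus\{m\}$ and $\Q_{\Short}(\M_{N,f}(m)) = \Q_{\Short}(\M_{N,f}) - F_{N,\Short(m)}$. The first term is symmetric by Theorem~\ref{thm:matchings schur positive}. Under the hypotheses $\Short(m)\ne\emptyset$ and $m\ne\{(1,2)\}$ one checks easily that $\Short(m)\ne [N-1]$ (equality would force $(i,i+1),(i+1,i+2)\in m$ for $N\ge 3$, or $m=\{(1,2)\}$ for $N\le 2$), so Lemma~\ref{lem:singleton not symmetric} gives that $F_{N,\Short(m)}$ is not symmetric. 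A symmetric function minus a non-symmetric function is non-symmetric, hence not Schur-positive, completing the argument.
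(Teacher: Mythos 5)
Your proof is correct. The ``only if'' direction is essentially the paper's argument: specialize to $N'=N$, $f'=f$, write $\Q_{\Short}(\M_{N,f}\setminus\{m\}) = \Q_{\Short}(\M_{N,f}) - F_{N,\Short(m)}$, and invoke Lemma~\ref{lem:singleton not symmetric} together with sparseness of $\Short$ to rule out everything except $\Short(m)=\emptyset$ and $m=\{(1,2)\}$.

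Where you genuinely diverge is the ``if'' direction for $\Short(m)=\emptyset$. The paper proves the purely local Lemma~\ref{lem:avoiding set of short chord free matching is knuth like closed}: a single elementary Knuth-like transformation preserves containment of a short-chord-free pattern, because neither endpoint of the moving short chord can lie in the image of an embedding, so the index set can be adjusted by one vertex. You instead prove the stronger structural claim that $m'$ contains $m$ if and only if $\core(m')$ does, using Lemma~\ref{lem:properties of core--intersect stable} and Lemma~\ref{lem:properties of core--chord unstable iff all midpoints unstable} to show that an embedding of a short-chord-free pattern cannot meet an unstable region (such a region is perfectly and non-crossingly matched, and pulling it back through the embedding would force a short chord in $m$). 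Your claim exhibits $\M_{N',f'}(m)$ directly as a union of Knuth-like classes via Theorem~\ref{thm:knuth like equivalent iff same core}, which is a cleaner invariant but a longer argument; the paper's local check is shorter and yields only closure under the equivalence. Both routes then feed into Corollary~\ref{cor:closed under knuth like is schur positive}.

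One small slip: for $m=\{(1,2)\}$ the avoiding set is not ``matchings with no short chord'' but matchings with no chord at all, i.e.\ at most the single all-unmatched matching (empty unless $f'=N'$). Your conclusion survives, since under either reading every element has empty $\Short$ and the generating function is a nonnegative multiple of $s_{(N')}$, but the set you named is much larger than the actual one.
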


In the proof of Proposition~\ref{prop:characterize pattern avoiding schur positive singletons}, we will apply the following result:
\begin{lemma}
\label{lem:avoiding set of short chord free matching is knuth like closed}
    Let $m \in \M_N$ be a short-chord-free matching, and let $N',f'$ be nonnegative integers. Then the pattern-avoiding set $\M_{N',f'}(m)$ is closed under Knuth equivalence.
\end{lemma}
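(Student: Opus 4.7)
The plan is to prove the lemma in a local manner. Since Knuth-like equivalence is generated by elementary Knuth-like transformations and these are symmetric in the roles of $m_1$ and $m_2$, it suffices to show that if $m_1, m_2 \in \M_{N',f'}$ differ by a single elementary transformation and $m_2$ contains the pattern $m$, then $m_1$ also contains $m$. Writing this contrapositive form of closure is more convenient than arguing directly about avoidance.

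The key observation — which crucially uses the hypothesis $\Short(m) = \emptyset$ — is the following. Given any matching $m^{*}$ and a copy $i_1 < \dots < i_N$ of $m$ inside $m^*$, the vertex set $\{i_1,\dots,i_N\}$ must be $m^{*}$-invariant in the sense of Definition~\ref{def:restriction of matching}: a vertex of $m^*$ inside the copy that is matched in $m^*$ cannot be matched outside the copy, since this would be inconsistent with the two biconditions of Definition~\ref{def:pattern avoidance}. Combining invariance with the fact that $m$ has no short chords, I would then conclude: if $(a,a+1)$ is any short chord of $m^*$, then neither $a$ nor $a+1$ can lie in the copy, for otherwise invariance forces both in, and the pair $(a,a+1)$ would correspond to a short chord of $m$, contradicting short-chord-freeness.

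With this observation in hand, I would do a case analysis on the two types of elementary transformation from Definition~\ref{def:knuth like equivalence}. In each case $m_1$ and $m_2$ agree outside a small vertex set ($\{i,i+1,i+2\}$ or $\{i,i+1,i+2,j\}$), and $(i+1,i+2)$ is a short chord of $m_2$. The observation immediately excludes $i+1$ and $i+2$ from the copy in $m_2$. For the first transformation type this leaves only the unmatched vertex $i$ as a possibly used ``special'' vertex; replacing $i$ in the copy by the unmatched vertex $i+2$ of $m_1$ yields a copy of $m$ in $m_1$. For the second type, invariance in $m_2$ forces $i$ and $j$ to lie in the copy either together or not at all; if both do, replacing $i$ by $i+2$ converts the chord $(i,j) \in m_2$ into $(i+2,j) \in m_1$, producing a copy of $m$ in $m_1$; if neither does, the same indices already describe a copy in $m_1$ because $m_1$ and $m_2$ agree off $\{i,i+1,i+2,j\}$. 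In each subcase, order preservation of the replacement follows automatically from the fact that $i+1,i+2$ are excluded, so the index of the copy immediately following $i$ is at least $i+3$.

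The main obstacle is simply the bookkeeping across subcases: I must check in each situation that the replacement really preserves the full chord/unmatched structure demanded by Definition~\ref{def:pattern avoidance}, and verify the strict inequality $i_{p-1} < i+2 < i_{p+1}$ wherever the substitution is applied. Handling the two sub-subcases $j > i+2$ and $j < i$ of the second transformation type requires symmetric but separate verification, but no new ideas.
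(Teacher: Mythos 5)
Your proposal is correct and follows essentially the same route as the paper's proof: both reduce to a single elementary transformation, use short-chord-freeness of $m$ (via the invariance of the copy's vertex set) to exclude the short chord's endpoints from the copy, and then fix the copy by swapping the one displaced index. You work in the reverse direction ($m_2 \to m_1$) and spell out the invariance observation and the second-type subcases that the paper leaves implicit, but no new idea is involved.
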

\begin{proof}
    Let $m_1' \in \M_{N',f'}$ be a matching, and let $m_2' \in \M_{N',f'}$ be the result of applying an elementary Knuth transformation on $m_1'$. Assume that $m_1'$ contains the pattern $m$. It suffices to prove that $m_2$ contains $m$ too. By Definition~\ref{def:pattern avoidance}, there exist indices $1 \le i_1 < \dots < i_{N} \le N'$ such that $(j,j') \in m \Longleftrightarrow (i_j,i_{j'}) \in m_1'$ for all $1 \le j < j' \le N$ and $(j) \in m \Longleftrightarrow (i_j) \in m_1'$ for all $1 \le j \le N$. Without loss of generality, we may assume that $(i,i+1) \in m_1'$ and $m_2'$ is obtained from $m_1'$ by interchanging the chord $(i,i+1)$ with the vertex $i+2$. The pattern $m$ is short-chord-free, so $i, i+1 \notin \{i_1,\dots,i_N\}$. Therefore,
    by considering the index set $\{i_1,\dots,i_N\}$ if $i+2 \notin \{i_1,\dots,i_N\}$ and the set $\{i\}\cup\{i_1,\dots,i_N\} \setminus \{i+2\}$ otherwise, we obtain that the matching $m_2'$ contains $m$ as well.
\end{proof}

Now, let us prove Proposition~\ref{prop:characterize pattern avoiding schur positive singletons}:
\begin{proof}[Proof of Proposition~\ref{prop:characterize pattern avoiding schur positive singletons}]
    Denote by $f$ the number of unmatched vertices of $m$, and assume that $\M_{N',f'}(m)$ is Schur-positive for all $N',f'$. In particular, we may take $N'=N$ and $f'=f$ and obtain that the set $\M_{N,f}(m)$ is Schur-positive. Since $\M_{N,f}(m) = \M_{N,f} \setminus \{m\}$, we obtain that the generating function $\Q(\M_{N,f} \setminus \{m\})$ is Schur-positive and, as such, symmetric. Clearly,
    \[
        \Q(\M_{N,f} \setminus \{m\})=\Q(\M_{N,f})-\Q(\{m\}).
    \]
    By Theorem~\ref{thm:matchings schur positive}, the function $\Q(\M_{N,f})$ is symmetric, so the function $\Q(\{m\}) = F_{\Short(m)}$ is symmetric as well. By Lemma~\ref{lem:singleton not symmetric}, we may deduce that $\Short(m) = \emptyset$ or $\Short(m) = [N-1]$.
    If $\Short(m) = \emptyset$ then the statement holds. Otherwise, $\Short(m) = [N-1]$. The statistic $\Short$ is sparse (recall Definition~\ref{def:sparse}), so we may deduce that $N\le 2$. If $N=2$ then $\Short(m)=\{1\}$, and therefore $m=\{(1,2)\}$. If $N=1$ then $\Short(m)= \emptyset$.

    On the other hand, we assume that $\Short(m) = \emptyset$ or $m=\{(1,2)\}$ and prove that $\M_{N',f'}(m)$ is Schur-positive for all $N',f'$.
    \begin{enumerate}
        \item Assume that $\Short(m) = \emptyset$, and let $N',f'$ be nonnegative integers.
        By Lemma~\ref{lem:avoiding set of short chord free matching is knuth like closed}, the set $\M_{N',f'}(m)$ is closed under Knuth equivalence. Therefore, by Corollary~\ref{cor:closed under knuth like is schur positive}, it is Schur-positive.
        \item Assume that $m=\{(1,2)\}$, and let $N',f'$ be nonnegative integers. Notice that a matching avoids the pattern $m$ if and only if it is an empty matching (i.e., with all its vertices unmatched). By Definition~\ref{def:knuth like equivalence}, such a matching is Knuth equivalent only to itself, so $\M_{N',f'}(m)$ is closed under Knuth equivalence. Therefore by Corollary~\ref{cor:closed under knuth like is schur positive}, $\M_{N',f'}(m)$ is Schur-positive.
        \qedhere
    \end{enumerate}
\end{proof}

\section{Further remarks and open problems}
\label{sec:further research}

\subsection{Criteria for special cases of Schur-positivity}
There is a simple well-known criterion for sets under which the generating function $\Q_{D}(\A)$ is non-negatively spanned by the Schur functions of hook shape $s_{N-k,1^k}$:
\begin{proposition}[folklore]
\label{prop:hook schur positive}
    A set $\A$ with a statistic $D:\A \to 2^{[N-1]}$ has a generating function of the form
    \[
        \Q_{D}(\A) = \sum_{k=0}^{N-1} c_k s_{N-k,1^k},\ c_k \in \NN
    \]
    if and only if for every $J \subseteq [N-1]$, $|\{a \in \A \mid D(a) = J\}|$ depends only on $|J|$. In that case, $c_k = |\{a \in \A \mid D(a) = [k]\}|$.
\end{proposition}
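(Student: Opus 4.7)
The plan is to reduce the statement to the Adin--Roichman criterion (Theorem~\ref{thm:criterion schur positive young}) by computing the descent-generating function of the set $\Syt(N-k,1^k)$ of hook-shaped standard Young tableaux explicitly. The key observation is that for a hook shape $(N-k,1^k)$, the descent set of an SYT $T$ is determined by, and conversely determines, the set of entries appearing in the first column of $T$ below the corner.

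More precisely, the first step is to note that in any $T \in \Syt(N-k,1^k)$, the entry $1$ must sit at position $(1,1)$, and every other entry lies either in the remainder of the first row or in the first column. I would show that for $i \geq 1$, we have $i \in \Des(T)$ if and only if $i+1$ appears in the first column. Indeed, if $i+1$ is in the first column then it is strictly below $i$ regardless of where $i$ sits; and if $i+1$ is in the first row then it occupies a position weakly to the right of $i$ in the same row (so $i \notin \Des(T)$). Consequently the map
\[
    \Des: \Syt(N-k,1^k) \to \binom{[N-1]}{k}, \qquad T \mapsto \Des(T),
\]
is a bijection, since specifying the $k$ first-column entries (a $k$-subset of $\{2,\dots,N\}$) is equivalent via the shift $a \mapsto a-1$ to specifying $\Des(T) \in \binom{[N-1]}{k}$.

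The second step is purely formal: the bijection above yields
\[
    \sum_{T \in \Syt(N-k,1^k)} \boldsymbol{t}^{\Des(T)} = \sum_{\substack{J \subseteq [N-1] \\ |J|=k}} \boldsymbol{t}^J.
\]
Plugging this into Theorem~\ref{thm:criterion schur positive young}, the set $\A$ has generating function $\sum_k c_k s_{N-k,1^k}$ if and only if
\[
    \sum_{a \in \A} \boldsymbol{t}^{D(a)} = \sum_{k=0}^{N} c_k \sum_{\substack{J \subseteq [N-1] \\ |J|=k}} \boldsymbol{t}^J,
\]
and comparing coefficients of $\boldsymbol{t}^J$ shows this is equivalent to $|\{a \in \A \mid D(a) = J\}|$ depending only on $|J|$, with the common value being $c_{|J|}$. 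Specialising to $J = [k]$ extracts the formula $c_k = |\{a \in \A \mid D(a) = [k]\}|$.

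There is no real obstacle here; the only point requiring a bit of care is the hook-descent analysis in the first step, which must handle the boundary case $i = 1$ together with the generic case $i \geq 2$ uniformly. Once that bijection is established, both implications of the ``if and only if'' and the explicit formula for $c_k$ fall out immediately from Theorem~\ref{thm:criterion schur positive young} by matching coefficients of the squarefree monomials $\boldsymbol{t}^J$.
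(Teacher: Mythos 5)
Your proposal is correct and follows essentially the same route as the paper: the key fact that $\Des$ restricts to a bijection from $\Syt(N-k,1^k)$ onto the $k$-subsets of $[N-1]$, followed by an application of Theorem~\ref{thm:criterion schur positive young}. The only (minor) differences are that you spell out the hook-descent bijection in detail where the paper merely asserts it, and that for the direction from the Schur expansion back to the descent distribution you compare coefficients of $\boldsymbol{t}^J$ directly in Theorem~\ref{thm:criterion schur positive young}, whereas the paper cites Adin and Roichman's result that a symmetric function determines its descent-set distribution; both are legitimate and rest on the same uniqueness statement.
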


\begin{proof}
    On one hand, assume that there exist $c_k \in \NN$ such that $|\A(D=J)| = c_{|J|}$ for all $J \subseteq [N-1]$. Notice that every tableau of a hook shape $T \in \Syt(N-k,1^k)$ has $|\Des(T)| = k$. Conversely, for every set $J \subseteq [N-1]$ with $|J| = k$ there exists a unique tableau $T \in \Syt(N-k,1^k)$ such that $\Des(T) = J$. Therefore, we obtain the equation
    \[
        \sum_{a \in \A} \boldsymbol{t}^{D(a)} = \sum_{k=0}^{N-1} c_k \sum_{T \in \Syt(N-k,1^k)} \boldsymbol{t}^{\Des(T)},
    \]
    where $\boldsymbol{t}^J := \prod_{j \in J} t_j$ for $J \subseteq [N-1]$.
    By Theorem~\ref{thm:criterion schur positive young}, this equation implies the statement.

    The other direction of the proposition states that if $\Q_{D}(\A) = \sum_{k=0}^{N-1} c_k s_{N-k,1^k}$, then $|\{a \in \A \mid D(a) = J\}| = c_{|J|}$. Adin and Roichman \cite[Corollary 8.1]{adin_roichman_fine_set} established that a generating function uniquely determines the descent set distribution, which implies the statement.
\end{proof}

Similarly to Proposition~\ref{prop:hook schur positive}, Theorem~\ref{thm:criterion schur positive two rows} provides a simple criterion under which $\Q_{D}(\A)$ is non-negatively spanned by the Schur functions of two-row shape $\{s_{N-k,k} \mid 0 \le k \le n\}$.
In the spirit of discovering new methods and expanding our understanding, we pose the problem of finding other such criteria, hopeful that they will contribute to further advancements in the study of Schur-positivity and Schur coefficients.
\begin{problem}
    Find other sets of partitions $\Lambda \subseteq \{\lambda \vdash N\}$ and criteria for a given set $\A$ under which $\Q_{D}(\A)$ is non-negatively spanned by the functions $\{s_\lambda \mid \lambda \in \Lambda\}$.
\end{problem}

\subsection{Short chords of matchings}
Theorem~\ref{thm:matchings schur positive} shows that the set $\M_{N,f}$ of matchings with a given number of unmatched vertices is Schur-positive with respect to the set of short chords, and describes its Schur expansion. In Section~\ref{sec:refinements}, several Schur-positive subsets of $\M_{N,f}$ are presented too. All these sets are shown to be closed under Knuth equivalence (described in Definition~\ref{def:knuth like equivalence}), so their Schur-positivity can be derived from Corollary~\ref{cor:closed under knuth like is schur positive}. It is desired to find Schur-positive sets of other types.
\begin{problem}
    Find other Schur-positive subsets of $\M_{N,f}$ with respect to $\Short$. In particular, find Schur-positive subsets that are not closed under Knuth equivalence.
\end{problem}

Schur-positive sets of particular interest are pattern-avoiding sets. As discussed earlier in Section~\ref{subsec:pattern avoiding matchings}, Schur-positivity of pattern-avoiding sets was extensively studied in the context of permutations. Recall the problem stated in Section~\ref{subsec:pattern avoiding matchings}:
\begin{problem*}[Problem~\ref{prob:schur positive pattern avoiding matchings} above]
    Determine which sets $\M \subseteq \M_N$ of matchings satisfy the property that for all $N',f'$, the pattern-avoiding set $\M_{N',f'}(\M)$ is Schur-positive with respect to $\Short$.
\end{problem*}

Proposition~\ref{prop:characterize pattern avoiding schur positive singletons} solves this problem for pattern sets of size $1$. We do not know a general answer for larger pattern sets.

The connection identified between the Schur expansion of $\M_{N,f}$ and Bessel polynomials, as revealed in Corollary~\ref{cor:schur expansion bessel}, serves as motivation to investigate the Schur expansion of additional sets:
\begin{problem}[Sergi Elizalde, personal communication]
    Find the Schur expansion of Schur-positive sets of matchings. For instance, find the Schur expansion of the set of $k$-crossing matchings in $\M_{N,f}$, or the Schur expansion of Schur-positive pattern-avoiding sets.
\end{problem}

\subsection{The involutive length}
\label{subsec:schreier graph of perfect matchings}

Here, we discuss one of the primary motivations behind this research. We denote by $\I_{2n}$ the set of fixed-point-free involutions in the symmetric group $S_{2n}$. Furthermore, we define the \emph{simple reflections} $s_i = (i,i+1) \in S_{2n}$ for $1 \le i < 2n$, and denote $w_0 = (1,2)(3,4)\cdots (2n-1,2n)$. Adin, Postnikov, and Roichman~\cite{gelfand_involutions} defined the \emph{involutive length} of an involution $w \in \I_{2n}$ as
\[
    \hat{\ell}(w) := \min\{k \mid s_{i_1}\cdots s_{i_k} w (s_{i_1}\cdots s_{i_k})^{-1} = w_0,\ 1 \le i_1,\dots,i_k <2n\}.
\]
They also defined the \emph{involutive weak order} $\le_\I$ on $\I_{2n}$, as the reflexive and transitive closure of the relation $w \prec_\I s_i w s_i$ if $\hat{\ell}(s_i w s_i) = \hat{\ell}(w) + 1$. The involutive order is motivated by the Bruhat order defined for Coxeter groups \cite[Chapter 2]{bjorner2005combinatorics}.

Since fixed-point-free involutions in $S_{2n}$ naturally correspond to perfect matchings on $2n$ vertices, we will adopt the involutive length and the involutive weak order for perfect matchings in $\PM_{2n} := \M_{2n,0}$ and use the same notation for them.

Consider the natural action of $S_{2n}$ on the set $\PM_{2n}$, and denote $m_0 = \{(1,2),\; (3,4),\dots,\;\allowbreak (2n-1,2n)\}$. Notably, the stabilizer of $m_0$ is isomorphic to the hyperoctahedral group $H_n = S_2 \wr S_n$. Furthermore, consider the Schreier graph associated with this action with respect to the simple reflections. 
For an illustration of this Schreier graph, see Figure~\ref{fig:schreier graph example} which depicts it when $2n=4$.


\begin{figure}[htb]
    \begin{center}
        \usetikzlibrary{shapes}
        \begin{tikzpicture}[node distance={15mm}, thick, main/.style = {draw, ellipse, scale=1}]  
            \node[main] (1) {$12|34$};  
            \node[main] (2) [above of=1] {$13|24$}; 
            \node[main] (3) [above of=2] {$14|23$};
            
            \draw (1) to (2) node[below=23pt,right, scale=1] {$2$}; 
            \draw (2) to[looseness=1.5, in=-50, out=50] (3) node[below=23pt,right=15pt, scale=1] {$3$};  
            \draw (2) to[looseness=1.5, in=-130, out=130] (3) node[below=23pt,left=15pt, scale=1] {$1$};  
            \draw (1) to [out=160,in=200,looseness=5] (1) node[below=0pt,left=39pt, scale=1] {$1$};  
            \draw (1) to [out=20,in=-20,looseness=5] (1) node[below=0pt,right=39pt, scale=1] {$3$};  
            \draw (3) to [out=160,in=200,looseness=5] (3) node[below=0pt,left=39pt, scale=1] {$2$};  
        \end{tikzpicture} 
    \end{center}
    \caption{Schreier graph of the natural action of $S_4$ on $\PM_4$}
    \label{fig:schreier graph example}
\end{figure}



We can create a layered graph by partitioning the vertices (matchings) into layers based on their distance from $m_0$. Specifically, a matching in the $\ell$-th layer is at a distance of $\ell$ from $m_0$. Furthermore, Avni~\cite{avni_perfect_matchings} proved that this graph is bipartite when ignoring loops. Therefore, a matching in one layer is connected to matchings in the adjacent layers, but not within its own layer or to matchings in non-adjacent layers.

It turns out that this graph corresponds to the involutive length and the involutive weak order. First, the index of the layer of a matching $m$ is equal to its involutive length $\hat{\ell}(m)$. Moreover, for every $m_1, m_2 \in \PM_{2n}$ we have $m_1 \le_\I m_2$ if and only if there exists a geodesic path from $m_0$ to $m_2$ passing through $m_1$.

Based on the structure of the Schreier graph, we define three natural set-valued functions on perfect matchings, denoted by $\Asc(m)$, $\Loop(m)$, and $\Des(m)$, where $m$ is a matching in $\PM_{2n}$. Given a matching $m \in \PM_{2n}$ and an index $1 \leq i \leq 2n-1$, we define:
\begin{itemize}
    \item $i \in \Asc(m)$ if $s_i \cdot m >_\I m$, where the dot denotes the group action,
    \item $i \in \Des(m)$ if $s_i \cdot m <_\I m$, and
    \item $i \in \Loop(m)$ if $s_i \cdot m = m$.
\end{itemize}
It is noteworthy that $i\in \Asc(m)$ if and only if $\hat{\ell}(s_i \cdot m) > \hat{\ell}(m)$, $i\in \Des(m)$ if and only if $\hat{\ell}(s_i \cdot m) < \hat{\ell}(m)$, and $i\in \Loop(m)$ if and only if $\hat{\ell}(s_i \cdot m) = \hat{\ell}(m)$.


The algebraic interest of the graph motivates the analysis of these three statistics, specifically focusing on the question of the Schur-positivity of $\PM_{2n}$ with respect to each of them. As a first step in this direction, it is noteworthy that the definition of $\Asc$ coincides with the standard ascents of permutations when restricted to $\I_{2n}$. Consequently, the Schur-positivity of $\PM_{2n}$ with respect to $\Asc$ can be derived directly from classical properties of the Robinson-Schensted correspondence, as noted for example by Gessel and Reutenauer \cite[end of Section 7]{gessel1993counting}.
Moreover, we have the observation $\Loop(m) = \Short(m)$ for a perfect matching $m$. This observation, when combined with Theorem~\ref{thm:matchings schur positive}, implies that $\PM_{2n}$ is Schur-positive with respect to $\Loop$.

In contrast, the Schur-positivity of $\PM_{2n}$ with respect to $\Des$ remains a mystery.

\begin{question}[Ron Adin and Yuval Roichman, personal communication]
    Is $\PM_{2n}$ Schur-positive with respect to $\Des$?
\end{question}

Despite extensive analysis of the $\Des$ statistic, we are currently unable to provide an answer. However, based on experimental results from simulations, we have discovered that $\PM_{2n}$ is Schur-positive with respect to $\Des$ when $2n \leq 14$. Moreover, intriguingly, the statistics $\Asc$ and $\Des$ are found to be equidistributed for all $2n \leq 14$. These findings lead us to propose the following conjecture:

\begin{conjecture}[Ron Adin and Yuval Roichman, personal communication]
    For all $n \in \NN$,
    \[
        \sum_{m \in \PM_{2n}} \boldsymbol{t}^{\Asc(m)} = \sum_{m \in \PM_{2n}} \boldsymbol{t}^{\Des(m)},
    \]
    where $\boldsymbol{t}^J := \prod_{j \in J} t_j$ for $J \subseteq [2n-1]$.
\end{conjecture}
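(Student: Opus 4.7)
The plan is to recast the conjecture in the language of symmetric functions. Since $\Asc(m)$ coincides with the classical ascent set of $m$ viewed as a fixed-point-free involution in $S_{2n}$, the classical RSK correspondence on involutions combined with Lemma~\ref{lem:symmetric complement is symmetric} yields the Schur expansion
\[
    \sum_{m \in \PM_{2n}} F_{\Asc(m)} \;=\; \sum_{\substack{\lambda \vdash 2n \\ \text{all parts of } \lambda \text{ even}}} s_\lambda.
\]
Consequently, the conjecture is equivalent to proving the same identity with $\Asc$ replaced by $\Des$, which by Theorem~\ref{thm:criterion schur positive young} reduces to exhibiting a descent-preserving bijection
\[
    \phi\colon \PM_{2n}\;\longrightarrow\; \bigsqcup_{\substack{\lambda \vdash 2n \\ \text{all parts of } \lambda \text{ even}}} \Syt(\lambda),\qquad \Des(\phi(m)) = \Des(m).
\]

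I would first try to establish symmetry of $\sum_m F_{\Des(m)}$ as a preliminary step, using Lemma~\ref{lem:my symmetry criterion}: namely, prove that $|\{m \in \PM_{2n} : \Des(m) \subseteq S_\alpha\}|$ depends only on the partition type of $\alpha$. Here the condition $\Des(m) \subseteq S_\alpha$ translates, via the decomposition $\Des(m) = (\text{classical descents of } m) \setminus \Loop(m)$, into a local condition on each block determined by $\alpha$: within each block, every consecutive pair must be either a classical ascent of $m$ or matched by a short chord. Small-case verification ($2n=4,6$) should guide the construction of explicit bijections witnessing this equidistribution across rearrangements of $\alpha$. Once symmetry is established, one would either compute the Schur coefficients directly using the column-superstandard techniques of Section~\ref{subsec:proof if sparse and good sizes then schur positive two rows by superstandard}, or construct $\phi$ itself by applying RSK to $\core(m)$ and combining it with an additional twist that accounts for the unstable chords and their effect on $\Des(m)$.

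The main obstacle is that $\Des$ behaves poorly under the structural tools developed in the paper: an elementary Knuth-like transformation can change $\Des(m)$, so Corollary~\ref{cor:closed under knuth like is schur positive} does not apply directly, and the bijection $F$ of Theorem~\ref{thm:matchings schur positive} does not preserve $\Des$. As a further warning sign, direct computation for $2n=4$ shows that any bijection $\sigma\colon \PM_4 \to \PM_4$ satisfying $\Asc(\sigma(m)) = \Des(m)$ must be the non-trivial $3$-cycle sending $m_0 \mapsto 14|23 \mapsto 13|24 \mapsto m_0$; in particular it is not an involution, and it neither preserves nor reverses the involutive length. Hence natural candidates such as conjugation by the longest element, length reversal, or evacuation of the RSK tableau cannot work in isolation. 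A successful proof will likely require a genuinely new bijective algorithm, possibly guided by the Schreier-graph structure of the involutive weak order described in Section~\ref{subsec:schreier graph of perfect matchings}.
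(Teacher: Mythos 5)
This statement is posed in the paper as an open \emph{conjecture}: the authors give no proof, only computational verification of the equidistribution of $\Asc$ and $\Des$ on $\PM_{2n}$ for $2n \le 14$. Your proposal is therefore being measured against nothing, and the honest question is whether it actually closes the problem. It does not. Your preliminary reductions are correct: since the paper's $\Asc$ agrees with the classical ascent set of the fixed-point-free involution, RSK (which sends fixed-point-free involutions to SYT whose conjugate shape has all even parts) together with Lemma~\ref{lem:symmetric complement is symmetric} does give $\sum_{m} F_{\Asc(m)} = \sum_{\lambda \text{ even}} s_\lambda$, matching the paper's remark that Schur-positivity with respect to $\Asc$ is classical; and the reformulation of the conjecture as a descent-preserving bijection onto $\bigcup_{\lambda \text{ even}} \Syt(\lambda)$, or equivalently as the symmetry of $\sum_m F_{\Des(m)}$ with the same Schur expansion, is a valid restatement via Theorem~\ref{thm:criterion schur positive young}. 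Your small-case computation for $2n=4$ is also correct (the unique $\Asc$-to-$\Des$ bijection is the stated $3$-cycle), and your diagnosis of why the paper's machinery fails here --- $\Des$ is not preserved by Knuth-like transformations, is not sparse, and is not respected by the bijection $F$ --- is accurate and worth recording.

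The genuine gap is that the central claim is never established. The step ``prove that $|\{m \in \PM_{2n} : \Des(m) \subseteq S_\alpha\}|$ depends only on the partition type of $\alpha$'' is exactly the conjecture in disguise (it is equivalent to symmetry of the generating function by Lemma~\ref{lem:my symmetry criterion}), and your proposal offers no argument for it beyond ``small-case verification should guide the construction of explicit bijections.'' The translation of $\Des(m) \subseteq S_\alpha$ into a blockwise condition (each consecutive pair inside a block is a classical ascent or a short chord) is correct but is only a restatement, not a counting argument; nothing in the proposal shows this count is invariant under rearranging the block sizes, and unlike the $\Short$ statistic there is no obvious product structure over blocks because the ``classical ascent'' condition couples distant vertices. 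Likewise, ``apply RSK to $\core(m)$ and combine it with an additional twist'' is a hope, not a construction. So the proposal is a reasonable research plan with sound reductions, but it leaves the conjecture exactly as open as the paper does.
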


If this conjecture can be proven, it would establish the Schur-positivity of $\PM_{2n}$ with respect to $\Des$. Furthermore, a bijection on $\PM_{2n}$ that maps ascents to descents may unveil hidden symmetries within the Schreier graph.


\section{Acknowledgements}
My sincere thanks to my supervisors, Ron Adin and Yuval Roichman, for their constant support. I also wish to thank Yotam Shomroni, Noam Ta-Shma and Ohad Sheinfeld, for enlightening discussions and valuable editing suggestions. Special appreciation goes to Sergi Elizalde and Bruce Sagan for their insightful comments and suggestions, which significantly contributed to refining the paper.

\printbibliography
\end{document}